\newtheorem{thm}{Theorem}[section]
\newtheorem{lem}[thm]{Lemma}
\newtheorem{prop}[thm]{Proposition}
\newtheorem{defn}[thm]{Definition}
\theoremstyle{definition}
\newtheorem{rem}[thm]{Remark}
\newtheorem{qn}[thm]{Question}
\numberwithin{equation}{section}
\def\m{\mathcal}
\def\ind{\text{ind}}
\def\det{\text{det}}
\def\dim{\text{dim }}
\def\Ker{\text{Ker }}
\def\Ran{\text{Ran }}
\def\diag{\text{Diag }}
\begin{document}
\title[Essentially normal Cowen-Douglas Operators]{ Similarity Invariants of Essentially normal Cowen-Douglas Operators and Chern Polynomials\\
\vspace{0.7cm}
{\Small In Memory of R. G. Douglas}}

%    Information for first author
\author{Chunlan Jiang,  Kui Ji and Jinsong Wu}
%    Current address
\curraddr[C. Jiang and K. Ji]{Department of Mathematics, Hebei Normal University, Shijiazhuang, Hebei 050016, China} 

\curraddr[J. Wu]{Institute for Advanced Study in Mathematics, Harbin Institute of Technology, Heilongjiang 150001, China}

\email[C. Jiang]{cljiang@hebtu.edu.cn}
\email[K. Ji]{jikui@hebtu.edu.cn, jikuikui@163.com}
\email[J. Wu]{wjs@hit.edu.cn}

\thanks{
The first and the second authors were supported by National Natural Science Foundation of China (Grant No. 11831006 and 11922108). 
The third author was supported by National Natural Science Foundation of China (Grant No. 11771413).
}

\subjclass[2000]{Primary 47C15, 47B37; Secondary 47B48, 47L40}

\keywords{}

\begin{abstract} 
In this paper, we systematically study a class of essentially normal operators by using the geometry method from the Cowen-Douglas theory and prove a Brown-Douglas-Fillmore theorem in the Cowen-Douglas theory. 
More precisely, the Chern polynomials and the second fundamental forms are the similarity invariants (in the sense of Herrero) of this class of essentially normal operators.
\end{abstract}

\maketitle

\section{Introduction}
People have long sought to establish connections between operator theory on the one hand, and geometry and algebraic topology on the other.
The Brown-Douglas-Fillmore theory \cite{BDF77} and the Cowen-Douglas operator theory \cite{CowDou78} together provide a powerful suite of tools for building such connections.
The Brown-Douglas-Fillmore theorem states that any two essentially normal operators are unitarily equivalent up to a compact perturbation (which is called essentially unitary equivalence) if and only if their essential spectrums and Fredholm indexes are the same.
In 1991, Berg and Davidson \cite{BerDav91} give a constructive proof of the Brown-Douglas-Fillmore theorem.
The Brown-Douglas-Fillmore theorem shows that the essentially unitary invaritants of an essentially normal operators are essential spectrums and Fredholm indexes.
It is highly nontrivial to obtain unitary (similarity) invariants of operators.
An alternative way is to study the closure of unitary (similarity) orbits.
In general, the closure of unitary orbit of a Hilbert space operator was completely determined by Hadwin \cite{Had77}.
The closure of similarity orbit of a Hilbert space operator was ``essentially" solved by Apostol, Herrero and Voiculescu (\cite{AHV82}, Chapter 9 of \cite{AFHV84}).
The similarity orbit of a normal operator was characterized by Fialkow \cite{Fia75}.
In 1986, Herrero \cite{Her86} introduce ($\m{U}+\m{K}$)-orbits between unitary orbits and similarity orbits in which the unitary operator is replaced by an invertible operator of the form ``unitary operator + compact operator".
Herrero \cite{Her86} ask if we can find a simple characterization of the closure of $(\m{U}+\m{K})$-orbit of a Hilbert space operator.
In 1993, Guinand and Marcoux \cite{GuiMar93a,GuiMar93b} completely characterized the the closure of ($\m{U}+\m{K}$)-orbits of normal operators, compact operators and certain weighted shifts respectively.
In 1998, the first author and Wang \cite{JiaWan98} completely characterize the closure of ($\m{U}+\m{K}$)-orbits of certain essentially normal operators.
Inspired by Herrero's ($\m{U}+\m{K}$)-equivalence and the Brown-Douglas-Fillmore theorem, one could ask
\begin{qn}\label{qn2}
What are the ($\m{U}+\m{K}$)-invariants for essentially normal operators?
\end{qn}
We have not found any (partial) results to this question in any literature.
Moreover, it is challenging to characterize the ($\m{U+K}$)-orbits or invariants of arbitrary operators. 

The Cowen-Douglas theory introduced geometry operators for a given open subset $\Omega\subset\mathbb{C}$ and a natural number $n$. 
These operators are called Cowen-Douglas operators over $\Omega$ with index $n$. (The set of these operators is denoted by $\m{B}_n(\Omega)$.)
Cowen and Douglas showed that each operator in $\m{B}_n(\Omega)$ gives rise a rank $n$ Hermitian holomorphic vector bundle over $\Omega$.
The curvature of the vector bundle defines the curvature of the operator.
It is an elegant and deep result that the unitary invariants of Cowen-Douglas operators are their curvatures and their covariant partial derivatives.
Based on their results, it is natural to ask what the similarity invariants of Cowen-Douglas operators are.
Cowen and Douglas \cite{CowDou78} conjectured that the Cowen-Douglas operators over the open unit disk with index one are similar to each other if and only if the limit of the quotient of the two curvatures is one.
The conjecture turns out to be false by Clark and Misra \cite{ClaMis83, ClaMis85}.
The failure of the conjecture leads mathematicians to find similarity invariants for Cowen-Douglas operators with additional structures.
In 2013, Douglas, Kwon and Treil \cite{DKT13} studied the similarity of hypercontractions and backward Bergman shifts. 
Hou, Ji and Kown \cite{HJK} showed that the geometry invariant is good for similarity invariants.
In 2017, the first two authors, Keshari and Misra introduced and studied a class of Cowen-Douglas operators with flag structures, denoted by $\m{FB}_n(\Omega)$.
Later, the first two authors and Misra \cite{JJM17} introduced and studied a class Cowen-Douglas operators with a little more structures.
In 2019, the first two authors and Keshari \cite{JJK19} found that the curvatures and the second fundamental forms can completely characterize the similarity invariant for a norm dense class of Cowen-Douglas operators denoted by $\m{CFB}_n(\Omega)$.

Thanks to the Brown-Douglas-Fillmore theory and the Cowen-Douglas theory, problems in geometry and algebraic topology can be approached using techniques from operator theory.
Inspired by this, Douglas, Tang and Yu \cite{DTY16} used operator-theoritic methods to prove an analytic Grothendieck-Riemann-Roch theorem for certain spaces with singularities.
In this paper, we proceed in the reverse direction, using methods from complex geometry to find a complete set of invariants for the similarity problem for essentially normal operators.
Besides the curvature, the Chern polynomial is also important in geometry.
We partially answer Questions \ref{qn2} for certain essentially normal Cowen-Douglas operators. (The set of these operators is denoted by $\m{NCFB}_n(\Omega)$.)
We also show that the set $\m{NCFB}_n(\Omega)$ contains enough nontrivial operators (see Theorem \ref{thm:ukorbit}).
It is surprising for us to find that the Chern polynomials and the second fundamental forms can completely characterize the ($\m{U}+\m{K}$)-invariants for $\m{NCFB}_n(\Omega)$ (see Theorem \ref{thm:similarity} and \ref{thm:unitary}).
This result is a version of the Brown-Douglas-Fillmore theorem in the Cowen-Douglas theory.
It is important to notice that the Chern polynomials can not be taken as similarity invariants for $\m{CFB}_n(\Omega)$.
Our results show the Chern polynomial can significantly simplify the ($\m{U}+\m{K}$)-invariants for $\m{NCFB}_n(\Omega)$.
It is also worth to point out that the previous study on $(\m{U+K})$-orbits requires that the spectrum of the operator has to be simply connected.
However, our results do not have assumption on the spectrum.
In a word, our approach offers a natural complement to that of Douglas, Tang and Yu \cite{DTY16}.

The paper is organized as follows. 
In Section 2 we recall the definition and some properties of Cowen-Douglas operators and the subclasses $\m{FB}_n(\Omega), \m{CFB}_n(\Omega)$ of Cowen-Douglas operators.
In Section 3 we introduce the class $\m{NCFB}_n(\Omega)$ of essentially normal Cowen-Douglas operators and study the structures of the operators and present some examples.
In Section 4 we obtain completely the ($\m{U+K}$)-invariants of $\m{NCFB}_n(\Omega)$.
In Section 5 we obtain a complete set of unitary invariants of in $\m{NCFB}_n(\Omega)$.
In Section 6 we suggest some further research on ($\m{U+K}$)-orbits and ($\m{U+K}$)-homogeneous.

\section{Preliminaries}

Throughout this paper, we denote $\m{H}$ by an infinite-dimensional separable Hilbert space, $\m{L}(\m{H})$ the algebra of all bounded operators on $\m{H}$, $\m{K}(\m{H})$ the algebra of all compact operators on $\m{H}$, and $\m{A}(\m{H})$ the Calkin algebra $\m{L}(\m{H})/\m{K}(\m{H})$.
Let $\m{Q}: \m{L}(\m{H})\to \m{A}(\m{H})$ be the quotient map onto the Calkin algebra.
An operator $T$ on $\m{H}$ is essentially normal if $\m{Q}(T)$ is normal, i.e. $T^*T-TT^*\in \m{K}(\m{H})$.

We are interested in the following equivalences of operators: for any $T_1, T_2\in \m{L}(\m{H})$, we say
\begin{enumerate}
\item $T_1$ is unitarily equivalent to $T_2$ if there is a unitary operator $U\in \m{L}(\m{H})$ such that $T_1U=UT_2$, denoted by $T_1\sim_{u} T_2$;
\item $T_1$ is ($\m{U}+\m{K}$)-equivalent to $T_2$ if there are a unitary operator $U\in \m{L}(\m{H})$ and a compact operator $K\in \m{K}(\m{H})$ such that $U+K$ is invertible and $T_1(U+K)=(U+K)T_2$, denoted by $T_1\sim_{\m{U+K}} T_2$;
\item $T_1$ is similar to $T_2$ if there is an invertible operator $S\in \m{L}(\m{H})$ such that $T_1S=ST_2$, denoted by $T_1\sim_{s} T_2$;
\end{enumerate}

\begin{rem}
Suppose that $T_1, T_2\in\m{L}(\m{H})$ are essentially unitary equivalent.
Usually, we do not have $T_1\sim_{\m{U+K}} T_2$.
Suppose that $\{e_{n}\}_{n\geq 0}$ is an orthogonal normal basis of $\m{H}$ and $T_1$ is the shift on $\m{H}$, i.e. $T_1e_{n}=e_{n+1}, \ n\geq 0$ and $T_2$ is a Bergman shift such that $T_2e_n=\displaystyle \sqrt{\frac{n}{n+1}} e_{n+1}, \ n\geq 0$.
Then $T_1-T_2$ is a compact operator such that $(T_1-T_2)e_n=\displaystyle \frac{1}{n+1+\sqrt{n(n+1)}}e_{n+1}$ and $T_1$ is not similar to $T_2$.
In fact, if $T_1\sim_{s}T_2$, i.e. $T_1X=XT_2$ for some invertible operator $X$, we have that 
\begin{align*}
\langle X e_{n}, e_{n} \rangle=\displaystyle \sqrt{\frac{n+1}{n}}\langle XT_2 e_{n-1}, e_{n}\rangle=\displaystyle \sqrt{\frac{n+1}{n}}\langle  X e_{n-1}, T_1^* e_{n}\rangle=\displaystyle \sqrt{\frac{n+1}{n}}\langle X e_{n-1}, e_{n-1}\rangle,
\end{align*}
i.e. 
\begin{align*}
\lim_{n\to \infty}\langle X e_{n}, e_{n} \rangle= \lim_{n\to \infty}\sqrt{n+1}\langle Xe_0, e_0\rangle=\infty.
\end{align*}
We see that $X$ is not bounded, which leads a contradiction and $T_1$ is not similar to $T_2$.
\end{rem}

We now recall the setup for Cowen-Douglas operators (geometry operators) in the following.
Let $\m{H}$ be a separable infinite-dimensional Hilbert space and $\text{Gr}(n, \m{H})$ the set of all $n$-dimensional subspaces of $\m{H}$ for any $n\in\mathbb{N}$.
Suppose $\Omega\subset\mathbb{C}$ is an open bounded connected subset and $n\in \mathbb N$. 
A map $t: \Omega \rightarrow \text{Gr}(n,\m{H})$ is a holomorphic curve if there exist $n$ (point-wise linearly independent) holomorphic functions $ \gamma_1 ,\gamma_2 ,\cdots, \gamma_n$ on $\Omega$ taking values in $\m{H}$ such that $t(w)=\bigvee \{\gamma_1(w),\cdots,\gamma_n(w)\},$ $w\in \Omega.$ 
Each holomorphic curve $t:\Omega \rightarrow \text{Gr}(n,\m{H})$ gives rise to a rank $n$ Hermitian holomorphic vector bundle $E_t$ over $\Omega$, namely,
$$E_{t}=\left\{(x,w)\in \m{H}\times \Omega \mid x\in t(w)\right\} \quad \text{and} \quad \pi:E_t\to \Omega,\,\,\mbox{where} \quad \pi(x,w)=w.$$
The set $\{\gamma_1 \ldots, \gamma_n\}$ of holomorphic functions is a holomorphic frame for $t$, denoted by $\gamma$.
An operator $T$ acting on  $\m{H}$ is said to be a Cowen-Douglas operator with index $n$ associated with an open bounded subset $\Omega$, if  
\begin{enumerate}
\item $T-w$ is surjective, 
\item $\dim \Ker(T-w)=n$ for all $w\in \Omega$, 
\item $\bigvee\limits_{w\in \Omega} \Ker(T-w)=\mathcal{H}$. 
\end{enumerate}
The set of Cowen-Douglas operators is denoted by $\m{B}_n(\Omega)$. 

Cowen and Douglas\cite{CowDou78} showed that each operator $T$ in $\m{B}_n(\Omega)$ gives also rise to a rank $n$ Hermitian holomorphic vector bundle $E_T$ over $\Omega$, 
$$E_{T}=\{(x,w)\in \m{H}\times \Omega \mid x\in \Ker (T-w)\} \quad \text{and} \quad \pi:E_T\rightarrow \Omega \quad \text{where}\quad \pi(x,w)=w.$$

Two holomorphic curves $t$ and $\tilde{t}$ are said to be 
\begin{enumerate}
\item unitarily equivalent (denoted by $t\sim_u\tilde{t}$), if there exists a unitary operator $U \in \m{L}(\m{H})$ such that $U(w)t(w)=\tilde{t}(w),$  where $U(w):=U|_{E_t(w)}$ is the restriction of the unitary operator $U$ to the fiber $E_t(w)=\pi^{-1}(w)$. 
\item similar (denoted by $t\sim_s \tilde{t}$) if there exists an invertible operator $X\in \m{L}(\m{H})$ such that $X(w)t(w)={\tilde{t}(w)}$, where $X(w):= X|_{E_t(w)}$ is the restriction of $X$ to the fiber $E_t(w)$. 
 In this case, we say that the vector bundles $E_t$ and $E_{\tilde{t}}$ are similar.
 \end{enumerate}

For an open bounded connected subset $\Omega$ of $\mathbb{C}$, a Cowen-Douglas operator $T$ with index $n$  induces a non-constant holomorphic curve $t:\Omega \to \text{Gr}(n,\m{H})$, namely,  $t(w)=\Ker(T-w), w\in \Omega$ and hence a Hermitian holomorphic vector bundle $E_t$ (here vector bundle $E_t$ is same as $E_T$). 
Unitary and similarity invariants for the operator $T$ are obtained from the vector bundle $E_T$. 

The metric of the Hermitian holomorphic vector bundle $E_T$ with respect to a holomorphic frame $\gamma$ is given by  the $n\times n$ matrix
$$h_\gamma(w)=\left (\langle \gamma_j(w),\gamma_k(w)\rangle \right )_{k,j=1}^n,\quad  w\in \Omega.$$ 
Note that $\{\gamma_1(w), \ldots, \gamma_n(w)\}$ is linearly independent.
The matrix $h_\gamma(w)$ is invertible.
The curvature of the Hermitian holomorphic vector bundle $E_T$ is 
\begin{equation}\label{eq:curbundle}
\bar{\partial}\left (h_\gamma^{-1}(w){\partial}h_\gamma(w)\right )=-\frac{\partial}{\partial \overline{w}}\left (h_\gamma^{-1}(w)\frac{\partial}{\partial w}h_\gamma(w)\right)\, dw \wedge d\bar{w},
\end{equation}
where $\partial$ is the differential operator with respect to the complex variable $w$ and $\bar{\partial}$ is with respect to the conjugation $\bar{w}$.
The curvature $K_T$ of the Cowen-Douglas operator $T$ over $\Omega$ is given by the coefficient of Equation (\ref{eq:curbundle}):
\begin{align*}
K_T(w):= -\frac{\partial}{\partial \overline{w}}\left(h_\gamma^{-1}(w)\frac{\partial}{\partial w}
h_\gamma(w)\right).
\end{align*} 
The total Chern form of the holomorphic bundle $E_T$ is defined to be
\begin{align*}
C_{E_T}=\det\left(I+\frac{i}{2\pi}K_T\right).
\end{align*}
The Chern polynomial $C_{E_T}(w)$ of $E_T$ with respect to a complex variable $w$ is defined to be
$$C_{E_T}(w)=\det\left(I+w \frac{i}{2\pi}K_T\right).$$

Next, we will describe the structure of Cowen-Douglas operators in $\m{B}_n(\Omega)$.
\begin{thm}[{Upper triangular representation theorem}, \cite{JiaWan98}] 
Let $T\in\m{B}_n(\Omega)$.
Then there exists an orthogonal decomposition $\mathcal{H}=\mathcal{H}_1\oplus\mathcal{H}_2\oplus\cdots\oplus\mathcal{H}_n$ and operators $T_{1,1}, T_{2,2}, \cdots, T_{n,n}$ in $\m{B}_1(\Omega)$ such that $T$ has the following form 
\begin{equation} \label{1.1T}
T=\left ( \begin{smallmatrix}T_{1,1} & T_{1,2}& T_{1,3}& \cdots & T_{1,n}\\
0&T_{2,2}&T_{2,3}&\cdots&T_{2,n}\\
\vdots&\ddots&\ddots&\ddots&\vdots\\
0&\cdots&0&T_{n-1,n-1}&T_{n-1,n}\\
0&\cdots&\cdots&0&T_{n,n}
\end{smallmatrix}\right ).
\end{equation}
Let 
$\{\gamma_1,\gamma_2,\cdots,\gamma_{n}\}$ be a holomorphic frame
of  $E_T$ with ${\mathcal H}=
\bigvee\{\gamma_j(w),\;w\in \Omega,\;1\leq j \leq n\}, $  and   $t_j:\Omega \to
\mbox{Gr}(1,{\mathcal H}_j)$  be a holomorphic frame of $E_{T_{j,j}},\,1\leq j \leq n$. 
Then we can find certain relation between $\{\gamma_j\}^{n}_{j=1}$ and $\{t_j\}^{n}_{j=1}$ as the following equations:
\begin{equation}\label{1.1}
\begin{array}{llll}\gamma_1&=&t_1\\
                   \gamma_{2}&=&\phi_{1,2}(t_2)+t_{2}\\
                   \gamma_{3}&=&\phi_{1,3}(t_3)+\phi_{2,3}(t_3)+t_{3}\\
                  \cdots &=& \cdots \cdots\cdots\cdots\cdots\cdots\cdots\cdots\\
                    \gamma_{j}&=&\phi_{1,j}(t_j)+\cdots+\phi_{i,j}(t_j)+\cdots+t_{j}\\
                    \cdots &=& \cdots \cdots\cdots\cdots\cdots\cdots\cdots\cdots\cdots\cdots\cdots\\
                   \gamma_{n}&=&\phi_{1,n}(t_{n})+\cdots+\phi_{i,n}(t_{n})+\cdots+t_{n},\\
\end{array}
\end{equation}
where $\phi_{i,j}, (i,j=1,2,\cdots,n)$ are certain holomorphic bundle maps.
\end{thm}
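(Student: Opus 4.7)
The plan is induction on $n$, the index of the Cowen--Douglas operator. The base case $n=1$ is trivial with $\mathcal{H}_1 = \mathcal{H}$ and $T_{1,1} = T$. For the inductive step, fix $T \in \m{B}_n(\Omega)$ together with any holomorphic frame $\{\gamma_1,\ldots,\gamma_n\}$ for $E_T$. I would set $t_1 := \gamma_1$; since $\gamma_1(w) \in \Ker(T-w)$ pointwise, the subspace $\mathcal{H}_1 := \bigvee_{w \in \Omega}\gamma_1(w)$ is $T$-invariant. Because $\mathcal{H}_1$ is generated by eigenvectors of $T$ and $\gamma_1$ is a nowhere-vanishing holomorphic section, a short verification shows $T_{1,1} := T|_{\mathcal{H}_1}$ belongs to $\m{B}_1(\Omega)$ with $\Ker(T_{1,1}-w) = \mathbb{C}\,\gamma_1(w)$ for every $w \in \Omega$.

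Next I would check that the compression $\tilde{T} := P_{\mathcal{H}_1^\perp}\,T|_{\mathcal{H}_1^\perp}$ lies in $\m{B}_{n-1}(\Omega)$. Surjectivity of $\tilde{T}-w$ follows from that of $T-w$ together with invariance of $\mathcal{H}_1$: writing a preimage as $x = x_1 + x'$ with $x_1 \in \mathcal{H}_1$, $x' \in \mathcal{H}_1^\perp$ gives $(\tilde{T}-w)x' = y$. The kernel is
\[
\Ker(\tilde{T}-w) = \{x \in \mathcal{H}_1^\perp : (T-w)x \in \mathcal{H}_1\},
\]
and a dimension count using $\dim\Ker(T-w)=n$, $\mathcal{H}_1\cap\Ker(T-w) = \mathbb{C}\,\gamma_1(w)$, and the rank of $(T-w)$ mapping this preimage space to $\mathcal{H}_1$ gives $\dim\Ker(\tilde{T}-w) = n-1$. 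The density $\bigvee_w\Ker(\tilde{T}-w)=\mathcal{H}_1^\perp$ follows from the corresponding statement for $T$ by applying $P_{\mathcal{H}_1^\perp}$ and again using that $\mathcal{H}_1$ is already spanned by a subfamily of eigenvectors. Applying the induction hypothesis to $\tilde{T}$ yields an orthogonal decomposition $\mathcal{H}_1^\perp = \mathcal{H}_2 \oplus \cdots \oplus \mathcal{H}_n$ with upper-triangular block structure, diagonals in $\m{B}_1(\Omega)$ and frames $t_2,\ldots,t_n$; setting $T_{i,j} := P_{\mathcal{H}_i}T|_{\mathcal{H}_j}$ combines with $\mathcal{H}_1, T_{1,1}$ to give (\ref{1.1T}).

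To derive (\ref{1.1}), decompose $\gamma_j(w) = \sum_{i=1}^{n} v_{i,j}(w)$ with $v_{i,j}(w) \in \mathcal{H}_i$. Applying $(T-w)$ and comparing components against the upper-triangular form forces $v_{i,j}(w)=0$ for $i>j$ and forces $v_{j,j}(w) \in \Ker(T_{j,j}-w) = \mathbb{C}\,t_j(w)$, so $v_{j,j}(w) = c_j(w)\,t_j(w)$ for a nowhere-vanishing holomorphic scalar $c_j$. Rescaling $t_j$ by $c_j$ (which preserves the line bundle it frames) one may assume $v_{j,j} = t_j$, and then defining the holomorphic bundle maps by $\phi_{i,j}(t_j)(w) := v_{i,j}(w)$ for $i < j$ produces precisely the equations in (\ref{1.1}).

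The main obstacle I anticipate is the kernel-dimension computation for $\tilde{T}$: surjectivity and the density of eigenspaces are relatively routine, but establishing $\dim\Ker(\tilde{T}-w)=n-1$ uniformly in $w$ requires careful bookkeeping of the interaction between the finite-dimensional $\Ker(T-w)$, the possibly infinite-dimensional invariant subspace $\mathcal{H}_1$, and the projection $P_{\mathcal{H}_1^\perp}$; this is exactly what makes the block-triangular form hold holomorphically rather than just fiber by fiber.
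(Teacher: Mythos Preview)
The paper does not prove this theorem; it is quoted from \cite{JiaWan98} as background, so there is no in-paper argument to compare against. Your inductive strategy (peel off $\mathcal{H}_1=\bigvee_w\gamma_1(w)$, compress to $\mathcal{H}_1^\perp$, repeat) is the standard one from that reference.

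Two places need more work. First, the real obstacle is not the kernel-dimension count for $\tilde{T}-w$ but the step you call a ``short verification'': showing $T_{1,1}=T|_{\mathcal{H}_1}\in\m{B}_1(\Omega)$. Surjectivity of $T_{1,1}-w$ on $\mathcal{H}_1$ does not follow from surjectivity of $T-w$ on $\mathcal{H}$ (the restriction of a surjective operator to an invariant subspace need not remain surjective), and one must also rule out $\mathcal{H}_1\cap\Ker(T-w)$ being larger than one-dimensional. Both hold, but via the lemma (in Cowen--Douglas and in \cite{JiaWan98}) that the closed span of a single nowhere-vanishing holomorphic eigensection always gives a $\m{B}_1$ restriction; the proof uses the relations $(T-w_0)\gamma_1^{(k)}(w_0)=k\,\gamma_1^{(k-1)}(w_0)$ to realize $T_{1,1}-w_0$ as a backward shift on the span of the derivatives. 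Once that is in place, your kernel count for $\tilde{T}-w$ goes through cleanly: any $x'\in\Ker(\tilde{T}-w)$ has $(T-w)x'\in\mathcal{H}_1=(T_{1,1}-w)\mathcal{H}_1$, so $x'$ differs from an element of $\Ker(T-w)$ by something in $\mathcal{H}_1$.

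Second, your derivation of the frame relations has a gap: ``applying $(T-w)$ and comparing components'' does not force $v_{i,j}=0$ for $i>j$. The bottom block equation only gives $(T_{n,n}-w)v_{n,j}(w)=0$, i.e.\ $v_{n,j}(w)$ is a scalar multiple of $t_n(w)$, not necessarily zero; nothing in the component equations rules out an arbitrary $\gamma_j$ having a nonzero $\mathcal{H}_n$-part. The fix is to carry the frame through the induction: feed $\tilde{T}$ the projected sections $P_{\mathcal{H}_1^\perp}\gamma_2,\ldots,P_{\mathcal{H}_1^\perp}\gamma_n$ as its frame (their pointwise independence follows from $\Ker(T-w)\cap\mathcal{H}_1=\mathbb{C}\gamma_1(w)$), so that by the inductive hypothesis $P_{\mathcal{H}_1^\perp}\gamma_j\in\mathcal{H}_2\oplus\cdots\oplus\mathcal{H}_j$ already has the triangular shape, and then the $\mathcal{H}_1$-component of $\gamma_j$ supplies $\phi_{1,j}(t_j)$.
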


In the rest of the paper, we always assume that a Cowen-Douglas operator $T\in \m{B}_n(\Omega)$ has a matrix form $(T_{k,j})_{k,j=1}^n$ with respect to an orthogonal decomposition of the Hilbert space such that the diagonals are in $\m{B}_1(\Omega)$.

Recall that the commutant $\{T\}^{\prime}$ of $T$ on $\m{H}$ is the set of operators in $\m{L}(\m{H})$ commuting with $T$.
The operator $T$ is strongly irreducible if there is no non trivial idempotent in $\{T\}^{\prime}.$

An operator $T$ in ${\mathcal L}(\mathcal H)$ is said to be strongly
irreducible, if there is no non-trivial idempotent operator in
$\{T\}^{\prime}$, where $\{T\}^{\prime}$ denotes
the commutant of $T$, i.e., $\{T\}^{\prime}=\{B{\in}{\mathcal L}({\mathcal H}): {TB=BT}\}$. 
It is known that for any $T\in \m{B}_1(\Omega)$, $T$ is strongly irreducible. 
Every Cowen-Douglas operator can be written as the direct sum of finitely many strongly irreducible Cowen-Douglas operators (see \cite{JiaWan98}, chapter 3).
By the results in \cite{JW1}, the similarity of Cowen-Douglas operators can be reduced to the similarity of strongly irreducible Cowen-Douglas operators.

Now, we recall some classes of Cowen-Douglas operators with additional structures.

\begin{defn} 
Let $\mathcal{FB}_n(\Omega)\subset \m{B}_n(\Omega)$ be the set of $T\in\m{B}_n(\Omega)$ on $\m{H} = \m{H}_1 \oplus \cdots \oplus \m{H}_{n},$ having the following form
$$T=\left ( \begin{matrix}
T_{1,1} & T_{1,2} &\cdots&T_{1,n}\\
&T_{2,2}&\cdots&T_{2,n} \\
&&\ddots&\vdots\\
&&&T_{n,n}\\
\end{matrix} \right )$$
such that 
\begin{enumerate}
\item $T_{j,j}\in \m{B}_1(\Omega)$ for $1\leq j\leq n$;
\item $T_{j,j}T_{j,j+1}=T_{j,j+1}T_{j+1, j+1}$ for any $1\leq j\leq n-1$, i.e. $T_{j,j+1}$ intertwines $T_{j,j}$ and $T_{j+1, j+1}$.
\end{enumerate}
\end{defn}

\begin{defn} 
Let $T_1, T_2\in\m{L}(\m{H})$.
Then the Rosenblum operators $\tau_{T_1,T_2}$ is the map from $\m{L}(\m{H})$ to $\m{L}(\m{H})$ defined by:
$$\tau_{T_1,T_2}(X)=T_{1}X-XT_{2}, \quad X\in\m{L}(\m{H}). $$ 
If $T_1=T_2=T$, we let $\delta_{T}=\tau_{T, T}.$
\end{defn}

\begin{defn}[\bf{Property (H)}]\label{ph}
Let $T_1, T_2\in \m{L}(\m{H})$. 
The ordered pair $(T_1,T_2)$ has the \textit{Property $(H)$} if the following condition holds:
if there exists $X\in \m{L}(\m{H})$ such that  
\begin{enumerate}
\item $T_{1}X=XT_{2}$,
\item $X=T_{1}Z-ZT_{2},\quad \text{for some}\; Z\in \m{L}(\m{H})$, 
\end{enumerate}
then $X=0$.
\end{defn}

\begin{defn}[Definition 2.7 in \cite{JJK19}]\label{CFB1}
Let $\mathcal{CFB}_n(\Omega)$ be the subset of operators $T\in \m{B}_n(\Omega)$ satisfies the following properties: 
\begin{enumerate}
\item $T$ can be written as an $n\times n$ upper-triangular matrix from with respect to a topological direct decomposition of the Hilbert space and  $\diag T\in \{T\}^{\prime}$,
 where $\diag T$ is the diagonal of $T$.
\item For $1\leq k\leq j\leq n$, $T_{k,j}=C_{k,j}T_{k,k+1}T_{k+1,k+2} \cdots T_{j-1,j}$ for some $C_{k,j}\in\{T_{k,k}\}^{\prime}$ on $\m{H}_k$;
\item The ordered pair $(T_{j,j}, T_{j+1,j+1}), 1\leq j\leq n-1$ satisfies the Property $(H)$, that is,  
$$\Ker\tau_{T_{j,j},T_{j+1, j+1}}\cap \Ran\tau_{T_{j,j},T_{j+1,j+1}}=\{0\}, \quad 1\leq j\leq n-1.$$
\end{enumerate}
\end{defn}

\begin{rem}
According to 1.20 in \cite{CowDou78}, there exists a holomorphic function $\phi_{k,j}$ such that $C_{k,j}=\phi_{k,j}(T_{k,k})$ in Definition \ref{CFB1}.
\end{rem}

\begin{lem} \label{SI}\cite{JJK19}
Let $T\in \mathcal{CFB}_n(\Omega)$. 
Then $T$ is strongly irreducible  if and only if  $T_{j,j+1}\neq 0$ for any $j=1,2\cdots, n-1$.
\end{lem}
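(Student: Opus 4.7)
The plan is to prove the two directions separately. The ``only if'' direction is short and structural, while the ``if'' direction requires a careful analysis of an arbitrary idempotent in the commutant of $T$.

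For $(\Rightarrow)$, I argue by contrapositive. Suppose $T_{j_0,j_0+1}=0$ for some $1\le j_0\le n-1$. By condition (2) of Definition~\ref{CFB1}, every entry $T_{k,l}$ with $k\le l$ factors as $C_{k,l}T_{k,k+1}T_{k+1,k+2}\cdots T_{l-1,l}$; when $k\le j_0<l$, this product contains the zero factor $T_{j_0,j_0+1}$, so $T_{k,l}=0$. Setting $\m{H}^{\prime}=\m{H}_1\oplus\cdots\oplus\m{H}_{j_0}$ and $\m{H}^{\prime\prime}=\m{H}_{j_0+1}\oplus\cdots\oplus\m{H}_n$ therefore exhibits $T$ as a topological direct sum $T=T^{\prime}\oplus T^{\prime\prime}$, and the projection of $\m{H}$ onto $\m{H}^{\prime}$ along $\m{H}^{\prime\prime}$ is a nontrivial idempotent in $\{T\}^{\prime}$, contradicting strong irreducibility.

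For $(\Leftarrow)$, assume $T_{j,j+1}\ne 0$ for every $1\le j\le n-1$ and let $P=(P_{k,l})_{k,l=1}^n\in\{T\}^{\prime}$ be an idempotent; I aim to show $P\in\{0,I\}$. The argument combines the equations $TP=PT$ and $P^2=P$ with three structural inputs: (a) the intertwining $T_{k,k}T_{k,k+1}=T_{k,k+1}T_{k+1,k+1}$, (b) Property (H) for each pair $(T_{k,k},T_{k+1,k+1})$, and (c) the identity $\{T_{k,k}\}^{\prime}=\{\phi(T_{k,k}):\phi\in\m{O}(\Omega)\}$ (since $T_{k,k}\in\m{B}_1(\Omega)$, as in the remark after Definition~\ref{CFB1}). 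The plan is to show successively that (i) every strictly lower-triangular block $P_{k,l}$ with $k>l$ vanishes; (ii) each diagonal block takes the form $P_{k,k}=\phi_k(T_{k,k})$ with $\phi_1=\phi_2=\cdots=\phi_n$; and (iii) idempotency forces the common function $\phi$ to be constantly $0$ or $1$, whence (after possibly replacing $P$ by $I-P$) $P$ is strictly upper-triangular and idempotent, hence nilpotent, hence $0$. The core calculation for stage (ii) uses the $(k,k+1)$ block of $TP=PT$, simplified by (a) and by the vanishing from stage (i), to show
\begin{equation*}
\phi_k(T_{k,k})T_{k,k+1}-T_{k,k+1}\phi_{k+1}(T_{k+1,k+1})\in \Ker\tau_{T_{k,k},T_{k+1,k+1}}\cap\Ran\tau_{T_{k,k},T_{k+1,k+1}},
\end{equation*}
which Property (H) collapses to zero; applying both sides to an eigenvector of $T_{k+1,k+1}$ at $w\in\Omega$ and using that $T_{k,k+1}$ sends $\Ker(T_{k+1,k+1}-w)$ into $\Ker(T_{k,k}-w)$ with a holomorphic scaling not identically zero (because $T_{k,k+1}\ne 0$) yields $\phi_k\equiv\phi_{k+1}$ on $\Omega$.

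The main obstacle is stage (i). The equation obtained from the $(k,l)$ block of $TP=PT$ with $k>l$ reads
\begin{equation*}
T_{k,k}P_{k,l}-P_{k,l}T_{l,l}=\sum_{m<l}P_{k,m}T_{m,l}-\sum_{m>k}T_{k,m}P_{m,l},
\end{equation*}
whose right-hand side involves blocks strictly farther from the diagonal. An induction on $k-l$ starting from the corner $P_{n,1}$ reduces this to an intertwining relation $T_{k,k}X=XT_{l,l}$, but for $\m{B}_1(\Omega)$ operators such an equation does not, on its own, force $X=0$. One must bring in Property (H) and the idempotency $P^2=P$ in a coordinated way, transporting information between consecutive diagonal blocks through the nonvanishing intertwiners $T_{k,k+1}$; this coordinated induction is the technical heart of the ``if'' direction, after which stages (ii) and (iii) follow routinely.
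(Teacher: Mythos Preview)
The paper does not supply a proof of this lemma; it is quoted from \cite{JJK19} and stated without argument, so there is no in-paper proof to compare your proposal against.

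On the proposal itself: your $(\Rightarrow)$ direction is correct and complete. For $(\Leftarrow)$, stages (ii) and (iii) are set up correctly and would go through once stage (i) is in hand. The gap is that you do not carry out stage (i), and your diagnosis of what it requires is off: you say one ``must bring in Property~(H) and the idempotency $P^2=P$ in a coordinated way,'' but in fact neither is needed for this step. One can show that \emph{every} $P\in\{T\}'$ (idempotent or not) is block upper-triangular, using only Lemma~\ref{Hal} and the $\m{B}_1(\Omega)$ commutant structure. Proceed by induction on the column index $l$; once columns $1,\ldots,l-1$ are known to vanish below the diagonal, the $(k,l)$ block of $TP=PT$ for $k>l$ reads $T_{k,k}P_{k,l}+\sum_{j>k}T_{k,j}P_{j,l}=P_{k,l}T_{l,l}$. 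Left-multiplying the $(n-1,l)$ equation by the nonzero intertwiner $T_{l,l+1}\cdots T_{n-2,n-1}$ and using the $(n,l)$ equation shows that $T_{l,l+1}\cdots T_{n-1,n}P_{n,l}$ lies in $\Ker\delta_{T_{l,l}}\cap\Ran\delta_{T_{l,l}}$; Lemma~\ref{Hal} gives $\sigma=\{0\}$, and since every element of $\{T_{l,l}\}'$ is of the form $\phi(T_{l,l})$ with $\phi$ holomorphic on $\Omega$, this forces it to be $0$. Because $T_{l,l+1}\cdots T_{n-1,n}$ is a nonzero intertwiner (a product of nonzero intertwiners between $\m{B}_1(\Omega)$ operators acts nontrivially on eigenvectors), $P_{n,l}=0$. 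One then repeats the argument up the column to get $P_{n-1,l}=\cdots=P_{l+1,l}=0$. Property~(H) enters only at your stage (ii), and idempotency only at stage (iii), exactly as you outline.
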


In the end of this section, we recall some results for $\m{B}_1(\Omega)$ and some technique lemmas.
\begin{prop}[Proposition 4.12 in \cite{JJK19}]\label{u+k}
Let $T, \widetilde{T}\in \m{B}_1(\Omega)$.
Then $T\sim_{\m{U+K}}\widetilde{T}$ if and only if  there  exists $X\in \m{L}(\m{H})$  such that $\m{Q}(X)=\alpha \m{Q}(I)$, $0<\alpha<1$ and 
$$K_{T}(w)-K_{\widetilde{T}}(w)=\frac{\partial^2}{\partial w\partial\overline{w}} \Psi(w), \quad \Psi(w)=\ln\left(\frac{\|X(t(w))\|^2}{\|t(w)\|^2}+(1-\alpha^2)\right)$$
where $t$ is a non zero section of $E_T$.  
\end{prop}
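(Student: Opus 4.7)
The plan is to translate an intertwiner $W=U+K$ between $T$ and $\widetilde T$ into a positive, essentially scalar operator $X^*X$, and conversely to extract the required similarity from the operator $X^*X+(1-\alpha^2)I$. The geometric engine is the formula $K_T=-\frac{\partial^2}{\partial w\partial\bar w}\log\|s\|^2$ valid for any non-vanishing holomorphic frame $s$ of $E_T$; since the right-hand side changes by $\frac{\partial^2}{\partial w\partial\bar w}\log|f|^2=0$ under holomorphic rescaling $s\mapsto fs$, it is independent of the frame chosen, and this freedom will be used freely in both directions.

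For the ``only if'' direction, suppose $T\sim_{\m{U+K}}\widetilde T$ via $W=U+K$, so $TW=W\widetilde T$ and hence $\widetilde TW^{-1}=W^{-1}T$. Then $W^{-1}t$ is a non-vanishing holomorphic frame of $E_{\widetilde T}$, so
\begin{align*}
K_T-K_{\widetilde T}=\frac{\partial^2}{\partial w\partial\bar w}\log\left(\frac{\|W^{-1}t\|^2}{\|t\|^2}\right).
\end{align*}
Choose $\alpha\in(0,1)$ with $1-\alpha^2\leq\|W\|^{-2}$, so that $W^{-*}W^{-1}-(1-\alpha^2)I\geq 0$, and set $X:=\left(W^{-*}W^{-1}-(1-\alpha^2)I\right)^{1/2}$. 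By construction $\|Xt\|^2+(1-\alpha^2)\|t\|^2=\langle W^{-*}W^{-1}t,t\rangle=\|W^{-1}t\|^2$, which matches the expression in the definition of $\Psi$. Since $\m{Q}(W)=\m{Q}(U)$ is a unitary in the Calkin algebra, $\m{Q}(W^{-*}W^{-1})=\m{Q}(I)$, whence $\m{Q}(X^*X)=\alpha^2\m{Q}(I)$; as $X\geq 0$, uniqueness of positive square roots in the $C^*$-algebra $\m{A}(\m{H})$ forces $\m{Q}(X)=\alpha\m{Q}(I)$.

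For the ``if'' direction, given $X$ with $\m{Q}(X)=\alpha\m{Q}(I)$ and the curvature identity, form $Y:=X^*X+(1-\alpha^2)I$. Then $Y\geq(1-\alpha^2)I>0$ is invertible and $\m{Q}(Y)=\m{Q}(I)$; by continuous functional calculus in the Calkin algebra the positive square root satisfies $\m{Q}(Y^{1/2})=\m{Q}(I)$, so both $Y^{\pm 1/2}$ are of the form identity plus compact. Define $T':=Y^{1/2}TY^{-1/2}$. The similarity $Y^{-1/2}$ lies in $\m{U}+\m{K}$ and is invertible, so $T\sim_{\m{U+K}}T'$. Using $Y^{1/2}t$ as a holomorphic frame of $E_{T'}$,
\begin{align*}
K_{T'}=-\frac{\partial^2}{\partial w\partial\bar w}\log\|Y^{1/2}t\|^2=-\frac{\partial^2}{\partial w\partial\bar w}\bigl(\Psi+\log\|t\|^2\bigr)=K_T-\frac{\partial^2}{\partial w\partial\bar w}\Psi=K_{\widetilde T}.
\end{align*}
The Cowen-Douglas unitary rigidity for rank-one bundles then yields $T'\sim_u\widetilde T$, and transitivity of $(\m{U+K})$-equivalence (the product of two ``unitary plus compact'' invertibles is again ``unitary plus compact'' and invertible) gives $T\sim_{\m{U+K}}\widetilde T$.

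The main technical point is the Calkin-algebra bookkeeping that links the essentially scalar condition $\m{Q}(X)=\alpha\m{Q}(I)$ with the essentially identity condition $\m{Q}(Y^{1/2})=\m{Q}(I)$; both rely on uniqueness of positive square roots in $\m{A}(\m{H})$ together with positivity of the Calkin image. Once this is isolated, the algebraic identity $\|Xt\|^2+(1-\alpha^2)\|t\|^2=\|W^{-1}t\|^2$ makes the two constructions mutually inverse, and the remainder reduces to the standard curvature computation for holomorphic line bundles together with rigidity in $\m{B}_1(\Omega)$.
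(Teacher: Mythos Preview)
The paper does not prove this proposition; it is quoted from \cite{JJK19} and stated without proof here, so there is no in-paper argument to compare against. Your proof is correct and self-contained: in the forward direction you correctly extract $X=(W^{-*}W^{-1}-(1-\alpha^2)I)^{1/2}$ from an intertwiner $W=U+K$ (the bound $W^{-*}W^{-1}\geq\|W\|^{-2}I$ justifies the square root, and the Calkin computation $\m{Q}(X)=\alpha\m{Q}(I)$ is clean), and in the reverse direction the construction $T'=Y^{1/2}TY^{-1/2}$ with $Y=X^*X+(1-\alpha^2)I$ followed by curvature rigidity in $\m{B}_1(\Omega)$ is exactly the natural route. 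One minor remark: you might state explicitly that $T'\in\m{B}_1(\Omega)$ (which follows from similarity invariance of the Cowen--Douglas class) before invoking the curvature-implies-unitary-equivalence theorem, but this is standard.
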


Thoughout this paper, we denoted by $\sigma(T)$ the spectrum of an operator $T\in\m{L}(\m{H})$.
\begin{lem}\label{Hal}\cite{Hal} 
Let $X, T\in\m{L}(\mathcal {H})$. 
If $X \in \Ker\delta_{T} \cap\Ran\delta_{T}$, then $\sigma(X)=\{0\}$. 
\end{lem}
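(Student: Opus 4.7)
The statement is the classical Kleinecke--Shirokov theorem: if $[T,X]=0$ and $X=[T,Z]$ for some $Z$, then $X$ is quasinilpotent. The plan is to prove it by a derivation argument, extracting a rapidly decaying bound on $\|X^n\|^{1/n}$ from the fact that $\delta_T$ annihilates $X$.

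First, write $D=\delta_T$ and observe that $D$ is a derivation on $\mathcal{L}(\mathcal{H})$, i.e.\ $D(AB)=D(A)B+AD(B)$. By hypothesis $D(X)=0$ and there exists $Z\in\mathcal{L}(\mathcal{H})$ with $D(Z)=X$. The first key step is to prove by induction on $n$ the identity
\begin{equation*}
D^{n}(Z^{n}) \;=\; n!\,X^{n}.
\end{equation*}
The base case $n=1$ is the hypothesis $D(Z)=X$. For the inductive step, apply the Leibniz rule repeatedly: since $D(X)=0$, every time a $D$ lands on an $X$-factor the term vanishes, and one obtains a multinomial-type count that collapses to $n!\,X^{n}$. (A clean way is to first show $D(X^{k}Z)=X^{k+1}$ and then iterate $n$ times.)

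Second, expand $D^{n}$ using the commuting left and right multiplication operators $L_T$ and $R_T$: since $D=L_T-R_T$ and $[L_T,R_T]=0$, the binomial theorem yields
\begin{equation*}
D^{n}(A) \;=\; \sum_{k=0}^{n}\binom{n}{k}(-1)^{k} T^{\,n-k}\,A\,T^{\,k},
\end{equation*}
so the operator norm bound $\|D^{n}(A)\|\le 2^{n}\|T\|^{n}\|A\|$ holds for every $A\in\mathcal{L}(\mathcal{H})$. Applying this to $A=Z^{n}$ and combining with the identity above gives $n!\,\|X^{n}\|\le 2^{n}\|T\|^{n}\|Z\|^{n}$, hence
\begin{equation*}
\|X^{n}\|^{1/n} \;\le\; \frac{2\,\|T\|\,\|Z\|}{(n!)^{1/n}} \;\longrightarrow\; 0.
\end{equation*}
By the spectral radius formula, $r(X)=\lim_{n\to\infty}\|X^{n}\|^{1/n}=0$, so $\sigma(X)=\{0\}$.

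The only delicate point is the derivation identity $D^{n}(Z^{n})=n!\,X^{n}$; the rest is a purely norm-theoretic calculation. One has to be careful in the induction because $X$ and $Z$ need not commute, but since $D$ kills $X$, when expanding $D$ across a product containing an $X$-factor only those terms where $D$ falls on a $Z$-factor survive, which is exactly what produces the factorial counting.
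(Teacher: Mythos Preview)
Your argument is correct: this is precisely the classical Kleinecke--Shirokov proof. The identity $D^{n}(Z^{n})=n!\,X^{n}$ follows cleanly from the general Leibniz rule
\[
D^{n}(Z^{n})=\sum_{k_{1}+\cdots+k_{n}=n}\frac{n!}{k_{1}!\cdots k_{n}!}\,D^{k_{1}}(Z)\cdots D^{k_{n}}(Z),
\]
since $D^{0}(Z)=Z$, $D^{1}(Z)=X$, and $D^{k}(Z)=0$ for $k\ge 2$; the only surviving term has every $k_{i}=1$. (Your parenthetical suggestion ``first show $D(X^{k}Z)=X^{k+1}$ and then iterate $n$ times'' is true as a fact but does not by itself yield $D^{n}(Z^{n})=n!\,X^{n}$; the multinomial argument is what you actually need and is what you described before the parenthesis.) The norm estimate via the binomial expansion of $D^{n}=(L_{T}-R_{T})^{n}$ and the spectral-radius conclusion are standard and correct.

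As for comparison with the paper: the paper does not prove this lemma at all. It is stated with a citation to Halmos \cite{Hal} and invoked as a known result (and then transported to the Calkin algebra in the next lemma). So your write-up supplies exactly the proof the paper omits, via the canonical route.
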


\begin{lem}\label{Hal1}
Let $X, T\in\m{L}(\mathcal {H})$. 
If $\m{Q}(X)\in \Ker\delta_{\m{Q}(T)} \cap \Ran\delta_{\m{Q}(T)}$, then $\sigma(\m{Q}(X))=\{0\}$. 
\end{lem}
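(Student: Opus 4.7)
The plan is to reduce Lemma \ref{Hal1} to Lemma \ref{Hal} by lifting the hypothesis from the Calkin algebra to an honest $B(\tilde{\m{H}})$ via a faithful representation, and then quoting Lemma \ref{Hal}. Alternatively, one observes that the argument behind Lemma \ref{Hal} (which is the Kleinecke--Shirokov theorem) is purely Banach-algebraic and applies verbatim to $\m{A}(\m{H})$.

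First I would invoke the Gelfand--Naimark theorem: since $\m{A}(\m{H}) = \m{L}(\m{H})/\m{K}(\m{H})$ is a unital $C^\ast$-algebra, there exist a Hilbert space $\tilde{\m{H}}$ and a unital, faithful $\ast$-representation $\pi: \m{A}(\m{H}) \to \m{L}(\tilde{\m{H}})$. Set $\tilde{X} = \pi(\m{Q}(X))$ and $\tilde{T} = \pi(\m{Q}(T))$.

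Next I would translate the hypothesis. The assumption $\m{Q}(X) \in \Ran \delta_{\m{Q}(T)}$ means $\m{Q}(X) = \m{Q}(T)A - A\m{Q}(T)$ for some $A \in \m{A}(\m{H})$; applying the homomorphism $\pi$ gives $\tilde{X} = \tilde{T}\pi(A) - \pi(A)\tilde{T} \in \Ran \delta_{\tilde{T}}$. Similarly, $\m{Q}(X) \in \Ker \delta_{\m{Q}(T)}$ forces $\pi(\delta_{\m{Q}(T)}(\m{Q}(X))) = \delta_{\tilde{T}}(\tilde{X}) = 0$, so $\tilde{X} \in \Ker \delta_{\tilde{T}}$. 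Lemma \ref{Hal} then yields $\sigma(\tilde{X}) = \{0\}$.

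Finally I would conclude using the fact that every injective $\ast$-homomorphism between unital $C^\ast$-algebras is isometric and spectrum-preserving; hence $\sigma(\m{Q}(X)) = \sigma(\pi(\m{Q}(X))) = \sigma(\tilde{X}) = \{0\}$. I do not expect any real obstacle: the only point to check is that the lifting $\pi$ is a genuine $\ast$-homomorphism (automatic from Gelfand--Naimark) and hence commutes with the inner derivations and preserves spectra. If one preferred to avoid the representation, the same conclusion follows immediately from the Kleinecke--Shirokov theorem applied in the Banach algebra $\m{A}(\m{H})$ to the element $A$ with $\m{Q}(X) = [\m{Q}(T), A]$ and $[\m{Q}(T), [\m{Q}(T), A]] = 0$.
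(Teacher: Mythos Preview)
Your proposal is correct and essentially matches the paper's approach: the paper's proof is the single line ``It follows from Lemma \ref{Hal} directly,'' and your argument simply supplies the missing justification for why Lemma \ref{Hal} transfers to the Calkin algebra (either via a faithful $\ast$-representation or by observing that Kleinecke--Shirokov is Banach-algebraic). Both routes you outline are valid and amount to making the paper's one-line appeal rigorous.
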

\begin{proof}
It follows from Lemma \ref{Hal} directly.
\end{proof}

Let $T\in \mathcal{FB}_n(\Omega)$. For a $2\times 2$ block 
$\begin{pmatrix}
T_{i,i}&T_{i,i+1}\\
0&T_{i+1,i+1}\\
\end{pmatrix}$, if  $T_{i,i}T_{i,i+1}=T_{i,i+1}T_{i+1,i+1},$  the corresponding second fundamental form $\theta_{i,i+1}$, which is obtained by R. G. Douglas and G. Misra (see in \cite{DM}), is 
\begin{equation} \label{sf}
\theta_{i,i+1}(T)(w) =  \frac{ K_{T_{i,i}}(w) \,d\bar{w}} {\big(\frac{\|T_{i,i+1}(t_{i+1}(w))\|^2}{\|t_{i+1}(w)\|^2} - 
K_{T_{i,i}}(w)\big )^{1/2}}.
\end{equation}

Let  $T,\widetilde{T}$ have upper-triangular representation as in $(1.1)$ and assume that $T_{i,i}, T_{i+1,i+1}$ and $\tilde{T}_{i,i}, \tilde{T}_{i+1,i+1}$ have intertwines $T_{i,i+1}$ and $\tilde{T}_{i,i+1}$, respectively.  
If $K_{T_{i,i}}= K_{\widetilde{T}_{i,i}}$, then from $(1.3)$ it is easy to see that $$\theta_{i,i+1}(T)(w) =\theta_{i,i+1}(\tilde{T)}(w) \Leftrightarrow \frac{\|T_{i,i+1}(t_{i+1}(w))\|^2}{\|t_{i+1}(w)\|^2}=\frac{\|\widetilde{T}_{i,i+1}(\widetilde{t}_{i+1}(w))\|^2}{\|\widetilde{t}_{i+1}(w)\|^2}.$$

\section{Essentially normal Cowen-Douglas Operators}

In this section, we introduce certain essentially normal Cowen-Douglas operators and study their basic properties.

\begin{defn}\label{ncfb}
Let $\m{EN}$ be the set of all essentially normal operators on the Hilbert space $\m{H}$.
For an open subset $\Omega$ in $\mathbb{C}$ and $n\in\mathbb{N}$, we define $\m{NCFB}_n(\Omega)$ to be a subset of $\m{EN}\cap \m{CFB}_n(\Omega)$ subject to the following condition:
for any $T\in \m{NCFB}_n(\Omega)$, there exist idempotents $P_1, \ldots, P_n$ satisfying $\sum\limits_{j=1}^n P_j=I$, $P_kP_j=0$, $k\neq j$ and $\m{Q}(P_j)^*=\m{Q}(P_j)$ for $j=1, \ldots,n$ such that $T\in \m{CFB}_n(\Omega)$ with respect to the topological direct decomposition $\Ran P_1\dotplus \cdots \dotplus \Ran P_n$.
\end{defn}

\begin{rem}
In Definition \ref{CFB1}, the topological direct decomposition can be perturbed by an invertible operator to an orthogonal decomposition.
To study ($\m{U+K}$)-invariant, we require that the topological direct decomposition can perturbed by a ($\m{U+K}$)-invertible operator to an orthogonal decomposition.
The additional condition in Definition \ref{ncfb} is to meet the requirement above (See the following Proposition \ref{nc} for a proof).
\end{rem}

\begin{prop}\label{nc} 
Suppose that ${\mathcal H}={\mathcal H}_1 \dotplus {\mathcal H}_2\dotplus \cdots  \dotplus{\mathcal H}_n$ is a topological direct decomposition of $\mathcal{H}$ and there exist idempotents $P_1, P_2,\ldots, P_n$ such that $\sum\limits_{j=1}^nP_j=I, P_kP_j=0,\, k\neq j$ where $\Ran P_j={\mathcal H}_j$. 
If there exists a compact operator $K_j$ such that $P_j=\left(\begin{smallmatrix}I_{\mathcal{H}_j}&K_j\\ 0&0\end{smallmatrix}\right)$ with respect to the Hilbert space decomposition $\m{H}_j\oplus\m{H}_j^\perp$. 
Then there exist a unitary operator $U$ and a compact operator $K$ such that $U+K$ is invertible, $Q_j=(U+K)P_j(U+K)^{-1}$ is an (orthogonal) projection, and $I=\sum_{j=1}^n Q_j$. 
\end{prop}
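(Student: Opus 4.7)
The plan is to exhibit a single positive invertible operator $V$ that simultaneously conjugates every $P_j$ to an orthogonal projection, and then show that $V$ is itself a compact perturbation of the identity. The candidate is $V := T^{1/2}$, where
$$T := \sum_{j=1}^n P_j^*P_j.$$
First, $T$ is manifestly positive. For invertibility, I would use $I = \sum_j P_j$ together with the Cauchy--Schwarz inequality, which gives $\|x\|^2 = \bigl\|\sum_j P_jx\bigr\|^2 \le n\sum_j \|P_jx\|^2 = n\langle Tx,x\rangle$ for all $x\in\m{H}$, so $T \ge \tfrac{1}{n}I$.

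Next, I would exploit the relations $P_kP_j = \delta_{kj}P_j$ to collapse the defining sum of $T$ from both sides of $P_j$, yielding $TP_j = P_j^*P_j = P_j^*T$. This identity implies $(T^{1/2}P_jT^{-1/2})^* = T^{-1/2}P_j^*T^{1/2} = T^{1/2}P_jT^{-1/2}$, so each $Q_j := T^{1/2}P_jT^{-1/2}$ is a self-adjoint idempotent, hence an orthogonal projection. Conjugating the identities $\sum_j P_j = I$ and $P_jP_k = 0$ ($j\neq k$) by $T^{1/2}$ then yields $\sum_j Q_j = I$ and $Q_jQ_k = 0$ automatically.

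The main obstacle, and the only step that uses the specific matrix form of the $P_j$ supplied by the hypothesis, is to show that $T - I \in \m{K}(\m{H})$. I would write $P_j = Q^{\perp}_j + E_j$, where $Q^{\perp}_j$ denotes the orthogonal projection onto $\m{H}_j$ and $E_j = \bigl(\begin{smallmatrix}0 & K_j\\ 0 & 0\end{smallmatrix}\bigr)$ is compact. A direct expansion, using $(Q^{\perp}_j)^*Q^{\perp}_j = Q^{\perp}_j$, shows that $P_j^*P_j - Q^{\perp}_j$ is compact, so $T - \sum_j Q^{\perp}_j \in \m{K}(\m{H})$. Summing the equations $P_j - Q^{\perp}_j = E_j$ and invoking $\sum_j P_j = I$ gives $\sum_j Q^{\perp}_j = I - \sum_j E_j \in I + \m{K}(\m{H})$, and combining the two yields $T - I \in \m{K}(\m{H})$.

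Once $T - I$ is compact, continuous functional calculus applied to the positive invertible operator $T$ (whose spectrum lies in $[1/n,\|T\|]$) produces $V - I = T^{1/2} - I \in \m{K}(\m{H})$; concretely, $\m{Q}(T)=\m{Q}(I)$ in the $C^*$-algebra $\m{L}(\m{H})/\m{K}(\m{H})$ forces $\m{Q}(T^{1/2})=\m{Q}(I)$. Taking $U := I$ and $K := V - I$, the operator $U + K = V$ is invertible, $U$ is unitary, $K$ is compact, and each $Q_j = (U+K)P_j(U+K)^{-1}$ is an orthogonal projection, with $\sum_j Q_j = I$, which gives the proposition.
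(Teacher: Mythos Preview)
Your proof is correct, and it takes a genuinely different route from the paper's. The paper argues by induction on $n$: it first conjugates $P_1$ to the orthogonal projection onto $\m{H}_1$ via the explicit unipotent $\left(\begin{smallmatrix} I & -K_1\\ 0 & I\end{smallmatrix}\right)$, observes that the remaining (conjugated) idempotents now form a decomposition of $I_{\m{H}_1^\perp}$ of the same type, and iterates. Your approach is a one-shot symmetrization: the single operator $V=\bigl(\sum_j P_j^*P_j\bigr)^{1/2}$ simultaneously orthogonalizes all the $P_j$, and the compactness hypothesis on each $P_j-P_j^*$ is used only once, to show $V-I\in\m{K}(\m{H})$. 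Your argument is shorter, gives an explicit closed formula for $U+K$, and avoids having to check that the inductive hypothesis persists after each conjugation (a point the paper glosses over). The paper's method, on the other hand, is slightly more elementary in that it never invokes functional calculus or the $C^*$-structure of the Calkin algebra, and it makes visible that the conjugating operator can be taken upper-triangular with respect to the successive decompositions.
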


\begin{proof}
We assume that $P_1$ has the matrix form with respect to the orthogonal decomposition $\mathcal{H}=\mathcal{H}_1\oplus \mathcal{H}_1^{\perp}$ as
$$P_1=\left(\begin{matrix}I&P^{(1)}_{1,2}\\
0&0\\ \end{matrix}\right)\begin{small}\begin{matrix}
\mathcal{H}_1\\
\mathcal{H}_1^{\perp}
\end{matrix}\end{small},$$
where $P_{1,2}^{(1)}$ is a compact operator.
Suppose $K_{1,2}^{(1)}=-P^{(1)}_{1,2}$, then we have that 
$$\left(\begin{matrix}I&P^{(1)}_{1,2}\\
0&0\\ \end{matrix}\right) \left(\begin{matrix} I &K^{(1)}_{1,2}\\
0&I \\ \end{matrix}\right)=\left(\begin{matrix} I &K^{(1)}_{1,2}\\
0&I \\ \end{matrix}\right)\left(\begin{matrix}I&0\\
0&0\\ \end{matrix}\right).$$
Set $U_1+K_1=\left(\begin{matrix} I &K^{(1)}_{1,2}\\
0&I \\ \end{matrix}\right)$ and $Q_1=(U_1+K_1)P_1(U_1+K_1)^{-1}$, then 
$$I=Q_1\oplus (U_1+K_1)\left(\sum\limits_{j=2}^n P_j\right)(U_1+K_1)^{-1}.$$

Since $(U_1+K_1)\left(\sum\limits_{j=2}^nP_j\right)(U_1+K_1)^{-1}$ is a decomposition of $I_{\Ran(I-Q_1)}$, repeating the argument above for $(U_1+K_1)P_2 (U_1+K_1)^{-1}$, there exists $U_2+K_2$ such that 
$$(I\oplus (U_2+K_2))(0\oplus (U_2+K_2)P_2(U_2+K_2)^{-1})(I\oplus (U_2+K_2)^{-1})$$
is an orthogonal projection (denoted by $Q_2$).  
Then we have that 
\begin{eqnarray*}
I&=&Q_1\oplus Q_2\oplus (I\oplus (U_2+K_2)) (U_1+K_1)\left(\sum\limits_{i=3}^nP_i\right)(U_1+K_1)^{-1}(I\oplus (U_2+K_2)^{-1})\\
 &=& (I\oplus (U_2+K_2)) (U_1+K_1)\left(\sum\limits_{i=1}^nP_i\right)(U_1+K_1)^{-1}(I\oplus (U_2+K_2)^{-1})\end{eqnarray*}

By an inductive proof, we can find unitary operators $U_j$ and  compact operators $K_j, j=1,2\cdots, n$ such that 
$$Q_1\oplus Q_2\oplus\cdots Q_n=(I\oplus I\oplus \cdots(U_n+K_n))\cdots (U_1+K_1)\left(\sum\limits_{j=1}^n P_j\right)(U_1+K_1)^{-1}\cdots(I\oplus I\oplus \cdots(U_n+K_n)^{-1}).$$
This finishes the proof of this lemma. 

\end{proof}

For any topological direct decomposition of $\mathcal{H}$,  ${\mathcal H}={\mathcal H}_1 \dotplus {\mathcal H}_2\dotplus \cdots  \dotplus{\mathcal H}_n$, if there exist $n$ idempotents $P_1, P_2,\ldots, P_n$ such that $\sum\limits_{j=1}^nP_j=I, P_kP_j=0,\quad  k\neq j$ and $\Ran P_j={\mathcal H}_j$.  
Then we can find a unitary operator $U$ and a compact operator $K$  such that  $U+K$ is invertible and  $\left\{Q_j\right\}^n_{j=1}=\left\{(U+K)P_j(U+K)^{-1}\right\}^n_{j=1}$ be a set of orthogonal projections with  $Q_kQ_j=0,\,i\neq j$. 
Furthermore, 
$${\mathcal H}=(U+K){\mathcal H}_1\oplus (U+K){\mathcal H}_2 \oplus \cdots  \oplus (U+K){\mathcal H}_n,$$
where $(U+K){\mathcal H}_j= \Ran Q_j.$

\subsection{Structure}
We characterize the diagonals and off-diagonals of the operators in $\mathcal{NCFB}_n(\Omega)$ in the following theorem:

\begin{thm}\label{newl1}
Let $T\in \mathcal{NCFB}_n(\Omega)$.
Then we have that 
\begin{enumerate}
\item $T_{k,k}, 1\leq k\leq n$ are essentially normal; 
\item $T_{j,k}$ are compact for $1\leq j<k\leq n$.
\end{enumerate}
\end{thm}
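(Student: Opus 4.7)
The plan is to reduce to the orthogonal-decomposition setting and then induct on $n$, with a clean commutator identity at $n=2$ serving as the base case.

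By Proposition \ref{nc}, we may conjugate by a $(\m{U}+\m{K})$-invertible operator so that the topological direct decomposition $\m{H}=\m{H}_1\dotplus\cdots\dotplus\m{H}_n$ becomes orthogonal. Since such a conjugation differs from a unitary one by a compact perturbation, essential normality is preserved. Writing $T=(T_{k,j})$ as an upper-triangular block matrix with respect to this orthogonal decomposition, essential normality translates into every block entry of $T^*T-TT^*$ being compact, giving, modulo $\m{K}(\m{H})$,
\begin{equation*}
\sum_{l\le\min(k,j)}T_{l,k}^{\,*}T_{l,j}\;\equiv\;\sum_{l\ge\max(k,j)}T_{k,l}T_{j,l}^{\,*},\qquad 1\le k,j\le n.
\end{equation*}
I combine these with the intertwining identities $T_{k,k}T_{k,j}=T_{k,j}T_{j,j}$ built into the $\m{CFB}_n(\Omega)$ structure.

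The engine is the base case $n=2$. The intertwining $T_{1,1}T_{1,2}=T_{1,2}T_{2,2}$ together with the essentially normal $(1,2)$-entry $T_{1,1}^{\,*}T_{1,2}\equiv T_{1,2}T_{2,2}^{\,*}$ lets one compute $T_{1,1}^{\,*}T_{1,1}T_{1,2}$ and $T_{1,1}T_{1,1}^{\,*}T_{1,2}$ in two ways and obtain $[T_{1,1}^{\,*},T_{1,1}]T_{1,2}\equiv T_{1,2}[T_{2,2}^{\,*},T_{2,2}]$ modulo $\m{K}(\m{H})$. Substituting the $(1,1)$ and $(2,2)$ relations $[T_{1,1}^{\,*},T_{1,1}]\equiv T_{1,2}T_{1,2}^{\,*}$ and $[T_{2,2}^{\,*},T_{2,2}]\equiv-T_{1,2}^{\,*}T_{1,2}$ yields $2\,T_{1,2}T_{1,2}^{\,*}T_{1,2}\in\m{K}(\m{H})$. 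Left-multiplying by $T_{1,2}^{\,*}$ gives $(T_{1,2}^{\,*}T_{1,2})^{2}\in\m{K}(\m{H})$; because $T_{1,2}^{\,*}T_{1,2}\ge 0$, spectral theory for positive operators forces $T_{1,2}^{\,*}T_{1,2}\in\m{K}(\m{H})$, and so $T_{1,2}\in\m{K}(\m{H})$. Essential normality of $T_{1,1},T_{2,2}$ is then immediate from the $(1,1)$ and $(2,2)$ entries.

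For general $n$ I would induct on $n$, the inductive step reducing to the single claim $T_{n-1,n}\in\m{K}(\m{H})$. The $\m{CFB}_n(\Omega)$ product rule $T_{k,n}=C_{k,n}T_{k,k+1}\cdots T_{n-1,n}$ then makes the whole last column compact, so the restriction of $T$ to $\m{H}_1\oplus\cdots\oplus\m{H}_{n-1}$ becomes an element of $\m{NCFB}_{n-1}(\Omega)$ to which the induction hypothesis applies, and $[T_{n,n}^{\,*},T_{n,n}]\in\m{K}(\m{H})$ follows from the $(n,n)$ entry. To extract $T_{n-1,n}\in\m{K}(\m{H})$ I repeat the $n=2$ computation for the pair $(T_{n-1,n-1},T_{n,n})$ with intertwiner $T_{n-1,n}$; the $(n-1,n)$, $(n-1,n-1)$ and $(n,n)$ entries now carry additional mixing terms involving $T_{k,n-1}$ and $T_{k,n}$ for $k<n-1$, but by the $\m{CFB}_n(\Omega)$ factorization every such entry is a product through super-diagonals, so these mixing contributions can be rearranged to land on the appropriate side of the identity and still produce $2\,T_{n-1,n}T_{n-1,n}^{\,*}T_{n-1,n}\equiv 0$ modulo $\m{K}(\m{H})$.

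The main obstacle will be exactly the bookkeeping of those mixing terms for $n>2$: the $\m{CFB}_n(\Omega)$ product structure, together with Property~$(H)$ of adjacent diagonal pairs, is what must be invoked to ensure the extra contributions do not destroy the clean $n=2$ identity and that the induction actually closes.
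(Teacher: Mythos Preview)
Your base case $n=2$ is correct and is, at heart, the same spectral argument the paper uses: you are showing that $\m{Q}(T_{1,2}^{*}T_{1,2})$ is a positive element with square zero in the Calkin algebra, hence zero. The genuine gap is in your inductive step. The claim that for $n>2$ the mixing terms ``can be rearranged to land on the appropriate side of the identity and still produce $2\,T_{n-1,n}T_{n-1,n}^{*}T_{n-1,n}\equiv 0$'' is false. For $n=3$, carrying out your commutator computation with the $(2,3)$, $(2,2)$, $(3,3)$ (and necessarily also the $(2,1)$) entries and the intertwinings gives, modulo compacts and assuming $T_{1,3}=T_{1,2}T_{2,3}$,
\[
2\,T_{2,3}T_{2,3}^{*}T_{2,3}\;\equiv\;T_{1,2}^{*}T_{1,2}T_{2,3}\;-\;2\,T_{2,3}T_{1,3}^{*}T_{1,3},
\]
and the right-hand side is not a priori compact. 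Left-multiplying by $T_{2,3}^{*}$ yields $2A^{2}+2AB=B$ in the Calkin algebra with $A=\m{Q}(T_{2,3}^{*}T_{2,3})$, $B=\m{Q}(T_{1,3}^{*}T_{1,3})$; since $A,B$ need not commute, this does not force $A=0$. Property~$(H)$ cannot rescue the step either: it is an exact statement about $\Ker\tau\cap\Ran\tau$ and says nothing modulo compacts.

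The paper fixes this by attacking the last column in the \emph{opposite} order: first $T_{1,n}$, then $T_{2,n}$, and so on. The $(n,1)$-entry of $\m{Q}(T^{*})\m{Q}(T)=\m{Q}(T)\m{Q}(T^{*})$ is the sparsest and gives $\m{Q}(T_{1,n}^{*})\m{Q}(T_{1,1})=\m{Q}(T_{n,n})\m{Q}(T_{1,n}^{*})$; together with the intertwining $T_{1,1}T_{1,n}=T_{1,n}T_{n,n}$ this places $\m{Q}(T_{1,n}^{*}T_{1,n})$ in $\Ker\delta_{\m{Q}(T_{n,n})}$. The $(n,2)$-entry, right-multiplied by $\m{Q}(T_{2,n})$ and simplified via $T_{1,2}T_{2,n}=T_{1,n}$ and $T_{2,2}T_{2,n}=T_{2,n}T_{n,n}$, exhibits $\m{Q}(T_{1,n}^{*}T_{1,n})$ as the commutator $[\m{Q}(T_{n,n}),\m{Q}(T_{2,n}^{*}T_{2,n})]$, hence in $\Ran\delta_{\m{Q}(T_{n,n})}$. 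Lemma~\ref{Hal1} then forces $\sigma\big(\m{Q}(T_{1,n}^{*}T_{1,n})\big)=\{0\}$, and positivity gives $T_{1,n}$ compact. With $T_{1,n}$ gone the same argument on the $(n,2)$ and $(n,3)$ entries kills $T_{2,n}$, and one marches down the last column. Only after the entire last column is compact does one strip it off and recurse to the $(n{-}1)\times(n{-}1)$ block, exactly as you proposed; the point is that the order within the column matters, and your choice (nearest super-diagonal first) is the one that does not close.
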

\begin{proof} 
By the assumption that $T$ is essentially normal, we have $\m{Q}(T)\m{Q}(T^*)=\m{Q}(T^*)\m{Q}(T)$. 
Expanding it in the matrix form, we obtain that
\begin{equation}\label{compact} 
\begin{aligned}
&\left ( \begin{smallmatrix}
\m{Q}(T_{1,1}) & \m{Q}(T_{1,2}) &\cdots&\m{Q}(T_{1,n})\\
&\m{Q}(T_{2,2})&\cdots&\m{Q}(T_{2,n}) \\
&&\ddots&\vdots\\
&&&\m{Q}(T_{n,n})\\
\end{smallmatrix} \right )
\left ( \begin{smallmatrix}
\m{Q}(T^*_{1,1}) & &&\\
 \m{Q}(T^*_{1,2})&\m{Q}(T^*_{2,2})&& \\
\vdots&\vdots&\ddots& \\
\m{Q}(T^*_{1,n})&\m{Q}(T^*_{2,n})&\cdots&\m{Q}(T^*_{n,n})\\
\end{smallmatrix} \right )\\
=&\left ( \begin{smallmatrix}
\m{Q}(T^*_{1,1}) & &&\\
 \m{Q}(T^*_{1,2})&\m{Q}(T^*_{2,2})&& \\
\vdots&\vdots&\ddots& \\
\m{Q}(T^*_{1,n})&\m{Q}(T^*_{2,n})&\cdots&\m{Q}(T^*_{n,n})\\
\end{smallmatrix} \right )\left ( \begin{smallmatrix}
\m{Q}(T_{1,1}) & \m{Q}(T_{1,2}) &\cdots&\m{Q}(T_{1,n})\\
&\m{Q}(T_{2,2})&\cdots&\m{Q}(T_{2,n}) \\
&&\ddots&\vdots\\
&&&\m{Q}(T_{n,n})\\
\end{smallmatrix} \right ).
\end{aligned}
\end{equation} 
 Recall that $T\in \m{CFB}_n(\Omega)$, we see that 
 $$T_{k,k}T_{k,j}=T_{k,j}T_{j,j}, \quad 1\leq k<j\leq n,$$ and 
$$\m{Q}(T_{k,k})\m{Q}(T_{k,j})=\m{Q}(T_{k,j})\m{Q}(T_{j,j}), \quad  1\leq k<j\leq n.$$ 
Without loss of generality, we can assume that 
$$T_{k,j}=T_{k,k+1}T_{k+1,k+2}\cdots T_{j-1,j}, 1\leq k<j\leq n.$$
By Equation (\ref{compact}), we have that $$\m{Q}(T_{1,n}^*)\m{Q}(T_{1,1})=\m{Q}(T_{n,n})\m{Q}(T_{1,n}^*),$$ and 
$$\m{Q}(T_{1,n}^*)\m{Q}(T_{1,2})+\m{Q}(T^*_{2,n})\m{Q}(T_{2,2})=\m{Q}(T_{n,n})\m{Q}(T_{2,n}^*).$$
Thus, we have that 
\begin{align*}
\m{Q}(T_{1,n}^*)\m{Q}(T_{1,2})\m{Q}(T_{2,n})+\m{Q}(T^*_{2,n})\m{Q}(T_{2,2})\m{Q}(T_{2,n})&=\m{Q}(T_{n,n})\m{Q}(T_{2,n}^*)\m{Q}(T_{2,n}),\\
\m{Q}(T_{1,n}^*)\m{Q}(T_{1,n})+\m{Q}(T^*_{2,n})\m{Q}(T_{2,n})\m{Q}(T_{n,n})&=\m{Q}(T_{n,n})\m{Q}(T_{2,n}^*)\m{Q}(T_{2,n}).\\
\end{align*}
This indicates that 
$$\m{Q}(T_{1,n}^*)\m{Q}(T_{1,n})=\m{Q}(T_{n,n})\m{Q}(T_{2,n}^*)\m{Q}(T_{2,n})-\m{Q}(T^*_{2,n})\m{Q}(T_{2,n})\m{Q}(T_{n,n}).$$
On the other hand,  since $\m{Q}(T_{1,n}^*)\m{Q}(T_{1,1})=\m{Q}(T_{n,n})\m{Q}(T_{1,n}^*)$, we have that 
$$\m{Q}(T_{1,n}^*)\m{Q}(T_{1,n})\m{Q}(T_{n,n})=\m{Q}(T_{n,n})\m{Q}(T_{1,n}^*)\m{Q}(T_{1,n}).$$
That means $\m{Q}(T_{1,n}^*)\m{Q}(T_{1,n})\in \Ker\delta_{\m{Q}(T_{n,n})} \cap \Ran\delta_{\m{Q}(T_{n,n})}.$
By Lemma \ref{Hal1}, we have that 
$$\sigma\left(\m{Q}(T_{1,n}^*)\m{Q}(T_{1,n})\right)=\{0\}.$$  
Since $\m{Q}(T_{1,n}^*)\m{Q}(T_{1,n})$ is positive, it follows that $\m{Q}(T_{1,n}^*)\m{Q}(T_{1,n})=0$.
Hence $\m{Q}(T_{1,n})=0$, i.e. $T_{1,n}$ is a compact operator. 
 
 Now Equation (\ref{compact}) becomes:
\begin{equation}\label{compact0} 
\begin{aligned}
&\left ( \begin{smallmatrix}
\m{Q}(T_{1,1}) & \m{Q}(T_{1,2}) &\cdots&0\\
&\m{Q}(T_{2,2})&\cdots&\m{Q}(T_{2,n}) \\
&&\ddots&\vdots\\
&&&\m{Q}(T_{n,n})\\
\end{smallmatrix} \right )
\left ( \begin{smallmatrix}
\m{Q}(T^*_{1,1}) & &&\\
 \m{Q}(T^*_{1,2})&\m{Q}(T^*_{2,2})&& \\
\vdots&\vdots&\ddots& \\
0&\m{Q}(T^*_{2,n})&\cdots&\m{Q}(T^*_{n,n})\\
\end{smallmatrix} \right )\\
=&\left ( \begin{smallmatrix}
\m{Q}(T^*_{1,1}) & &&\\
 \m{Q}(T^*_{1,2})&\m{Q}(T^*_{2,2})&& \\
\vdots&\vdots&\ddots& \\
0&\m{Q}(T^*_{2,n})&\cdots&\m{Q}(T^*_{n,n})\\
\end{smallmatrix} \right )\left ( \begin{smallmatrix}
\m{Q}(T_{1,1}) & \m{Q}(T_{1,2}) &\cdots&0\\
&\m{Q}(T_{2,2})&\cdots&\m{Q}(T_{2,n}) \\
&&\ddots&\vdots\\
&&&\m{Q}(T_{n,n})\\
\end{smallmatrix} \right ).
\end{aligned}
\end{equation} 
By comparing the $(n,2)$, $(n,3)$-entries in Equation (\ref{compact0}), we have that 
$$\m{Q}(T_{2,n}^*)\m{Q}(T_{2,2})=\m{Q}(T_{n,n})\m{Q}(T_{2,n}^*),$$ and 
$$\m{Q}(T_{2,n}^*)\m{Q}(T_{2,3})+\m{Q}(T^*_{3,n})\m{Q}(T_{3,3})=\m{Q}(T_{n,n})\m{Q}(T_{3,n}^*).$$
 Repeating the same argument above, we also have that $\m{Q}(T_{2,n})=0$, i.e. $T_{2,n}$ is a compact operator.  Similarly, we can obtain that  $\m{Q}(T_{k,n})=0, k<n$. 
It follows that Equation (\ref{compact0}) becomes:
\begin{eqnarray*}\label{compact1}
 &&\left ( \begin{smallmatrix}
\m{Q}(T_{1,1}) & \m{Q}(T_{1,2}) &\cdots&\m{Q}(T_{1,n-1})&0\\
&\m{Q}(T_{2,2})&\cdots&\m{Q}(T_{2,n-1})&0 \\
&&\ddots&\ddots&\vdots\\
&&&\m{Q}(T_{n-1,n-1})&0\\
&&&&\m{Q}(T_{n,n})\\
\end{smallmatrix} \right )\left ( \begin{smallmatrix}
\m{Q}(T^*_{1,1}) && &&\\
 \m{Q}(T^*_{1,2})&\m{Q}(T^*_{2,2})&&& \\
\vdots&\vdots&\ddots& \\
\m{Q}(T^*_{1,n-1})&\cdots&\cdots&\m{Q}(T^*_{n-1,n-1})&\\
0&0&0&0&\m{Q}(T^*_{n,n})\\
\end{smallmatrix} \right )\\
&=&\left ( \begin{smallmatrix}
\m{Q}(T^*_{1,1}) && &&\\
 \m{Q}(T^*_{1,2})&\m{Q}(T^*_{2,2})&&& \\
\vdots&\vdots&\ddots& \\
\m{Q}(T^*_{1,n-1})&\cdots&\cdots&\m{Q}(T^*_{n-1,n-1})&\\
0&0&0&0&\m{Q}(T^*_{n,n})\\
\end{smallmatrix} \right )\left ( \begin{smallmatrix}
\m{Q}(T_{1,1}) & \m{Q}(T_{1,2}) &\cdots&\m{Q}(T_{1,n-1})&0\\
&\m{Q}(T_{2,2})&\cdots&\m{Q}(T_{2,n-1})&0 \\
&&\ddots&\ddots&\vdots\\
&&&\m{Q}(T_{n-1,n-1})&0\\
&&&&\m{Q}(T_{n,n})\\
\end{smallmatrix} \right ).\end{eqnarray*}
Then we could focus on the $(n-1)\times (n-1)$-matrix forms:
\begin{equation}\label{compact2} 
\begin{aligned}
&\left ( \begin{smallmatrix}
\m{Q}(T_{1,1}) & \m{Q}(T_{1,2}) &\cdots&\m{Q}(T_{1,n-1})\\
&\m{Q}(T_{2,2})&\cdots&\m{Q}(T_{2,n-1}) \\
&&\ddots&\vdots\\
&&&\m{Q}(T_{n-1,n-1})\\
\end{smallmatrix} \right )
\left ( \begin{smallmatrix}
\m{Q}(T^*_{1,1}) & &&\\
 \m{Q}(T^*_{1,2})&\m{Q}(T^*_{2,2})&& \\
\vdots&\vdots&\ddots& \\
\m{Q}(T^*_{1,n-1})&\m{Q}(T^*_{2,n-1})&\cdots&\m{Q}(T^*_{n-1,n-1})\\
\end{smallmatrix} \right )\\
=&\left ( \begin{smallmatrix}
\m{Q}(T^*_{1,1}) & &&\\
 \m{Q}(T^*_{1,2})&\m{Q}(T^*_{2,2})&& \\
\vdots&\vdots&\ddots& \\
\m{Q}(T^*_{1,n-1})&\m{Q}(T^*_{2,n-1})&\cdots&\m{Q}(T^*_{n-1,n-1})\\
\end{smallmatrix} \right )\left ( \begin{smallmatrix}
\m{Q}(T_{1,1}) & \m{Q}(T_{1,2}) &\cdots&\m{Q}(T_{1,n-1})\\
&\m{Q}(T_{2,2})&\cdots&\m{Q}(T_{2,n-1}) \\
&&\ddots&\vdots\\
&&&\m{Q}(T_{n-1,n-1})\\
\end{smallmatrix} \right ).
\end{aligned}
\end{equation} 
Notice that when $n=2$,  the conclusions holds by the same argument. Thus, by the inductive proof, we have that $\m{Q}(T_{k,j})=0$, for any $j>k$.  This finishes the proof of statement (1).
Furthermore, we have that 
$$ \m{Q}(T)=\left ( \begin{smallmatrix}
\m{Q}(T_{1,1}) & &&\\
&\m{Q}(T_{2,2})&& \\
&&\ddots&\\
&&&\m{Q}(T_{n,n})\\
\end{smallmatrix} \right ).$$
Thus, $\m{Q}(T)\m{Q}(T^*)=\m{Q}(T^*)\m{Q}(T)$ implies that $\m{Q}(T_{k,k})\m{Q}(T^*_{k,k})=\m{Q}(T^*_{k,k})\m{Q}(T_{k,k}), 1\leq k\leq n$.  This finishes the proof of statement  (2). 

\end{proof}

\subsection{Examples}
In this subsection, we show that there exists a strongly irreducible operator in $\m{NCFB}_n(\Omega)$ for any simply connected open subset $\Omega\subset\mathbb{C}$.

Let $d\mu$ denote the normalized Lebesgue area measure on the unit disk $\mathbb{D}$.
For any $\lambda>-1$, the weighted Bergman space $\mathbb{A}_{\lambda}^2$ is the space of analytic functions on $\mathbb{D}$ which are square-integrable with respect to the measure $d\mu_\lambda=(\lambda+1)(1-|z|^2)^\lambda d\mu(z)$ (See \cite{JZ} for more details on weighted Bergman spaces).
Let $K(z,w)=\displaystyle \frac{1}{(1-zw)^\lambda}$ be its positive definite kernel.

\begin{prop} 
Let $T$ be a homogenous operator in $\m{B}_1(\mathbb{D})$. 
Then $T\in \m{NCFB}_1(\mathbb{D})$.
\end{prop}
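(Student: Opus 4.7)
The plan is to reduce the problem to essential normality and then invoke the weighted-shift realization of homogeneous rank-one Cowen-Douglas operators. When $n=1$, Definition \ref{CFB1} becomes vacuous: the $1\times 1$ upper-triangular form is trivial, $\diag T = T$ lies in $\{T\}^{\prime}$, and conditions (2) and (3) are empty. Hence $\m{CFB}_1(\mathbb{D}) = \m{B}_1(\mathbb{D})$. Likewise, in Definition \ref{ncfb} one may take $P_1 = I$, which is an orthogonal projection trivially satisfying the idempotent and Calkin-self-adjointness requirements. Thus all that must be proved is that every homogeneous operator $T \in \m{B}_1(\mathbb{D})$ is essentially normal.

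First, I would invoke the classification (due to Misra--Wilkins and others) of homogeneous operators in $\m{B}_1(\mathbb{D})$: up to unitary equivalence, $T = M_z^*$ acting on the reproducing kernel Hilbert space $\m{H}_\lambda$ with kernel $K_\lambda(z,w) = (1-z\bar w)^{-\lambda}$ for some $\lambda > 0$. In the orthonormal basis of normalized monomials $e_n = \sqrt{\binom{n+\lambda-1}{n}}\, z^n$, the multiplication operator $M_z$ is the unilateral weighted shift $M_z e_n = w_n e_{n+1}$ with $w_n = \sqrt{(n+1)/(n+\lambda)}$.

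Next, I would compute the self-commutator. A direct calculation gives $[M_z^*, M_z] e_n = (w_n^2 - w_{n-1}^2) e_n$ for $n \geq 1$, with
\[
w_n^2 - w_{n-1}^2 = \frac{\lambda - 1}{(n+\lambda)(n+\lambda-1)} \longrightarrow 0 \text{ as } n \to \infty.
\]
Hence $[M_z^*, M_z]$ is diagonal with entries vanishing at infinity, so it is compact. Consequently $T^*T - TT^* = -[M_z^*, M_z]$ is compact, i.e.\ $T$ is essentially normal, and combined with the first paragraph we conclude $T \in \m{NCFB}_1(\mathbb{D})$.

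The main step to justify is invoking the classification of homogeneous rank-one Cowen-Douglas operators; the subsequent weighted-shift calculation is routine. An alternative that avoids citing the classification would use homogeneity to show that the curvature $K_T$ is a M\"obius-invariant negative multiple of the Poincar\'e metric on $\mathbb{D}$, from which one can read off the asymptotic flatness of the shift weights at the boundary and hence compactness of $[T^*,T]$; this route is slightly longer but keeps the argument self-contained.
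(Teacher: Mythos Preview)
Your proof is correct and follows essentially the same approach as the paper: both invoke the classification of homogeneous operators in $\m{B}_1(\mathbb{D})$ as adjoints of weighted shifts $M_z$ on the spaces with kernel $(1-z\bar w)^{-\lambda}$, and then verify essential normality by direct computation of the self-commutator. Your explicit calculation of $[M_z^*,M_z]$ is in fact more careful than the paper's (which asserts rank-one defect operators, strictly true only for $\lambda=1$), and your preliminary reduction showing that for $n=1$ the definition of $\m{NCFB}_1$ collapses to $\m{EN}\cap\m{B}_1(\mathbb{D})$ is a helpful clarification the paper leaves implicit.
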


\begin{proof} 
Let $T\in B_1(\mathbb{D})$ be a homogenous operator.  
Then there exists $\lambda>0$ and shifts $M_z$ on $\mathbb{A}_{\lambda}^2$ such that $T\sim_u M^*_z$. 
By a direct computation,  there exist rank-one operators $K_0$ and $K_1$ such that $M_zM^*_z=I-K_0, M^*_zM_z=I-K_1.$  
That means $M^*_z$ is essentially normal. 
Since $T\sim_u M^*_z$, then $T$ is also essentially normal.
\end{proof}

 \begin{defn}
 Let $T_1,T_2\in \m{B}_1(\Omega)$. 
 We call $T_1\prec T_2$ if  $\Ker\tau_{T_1,T_2}\neq \{0\}, \text{ and }\Ker\tau_{T_2,T_1}=\{0\}.$ 
\end{defn}

\begin{prop}\cite{TGJ}
The relation ``$\prec$'' is a strictly partially ordered relation for the operators in $\m{B}_1(\Omega)$.
\end{prop}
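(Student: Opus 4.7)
The plan is to verify the three axioms of a strict partial order (irreflexivity, asymmetry, transitivity). Asymmetry is built into the definition: if $T_1\prec T_2$, then $\Ker\tau_{T_2,T_1}=\{0\}$, which immediately precludes $T_2\prec T_1$. Irreflexivity is equally quick: $T\prec T$ would force $\Ker\tau_{T,T}$ to be both non-trivial and trivial. So the content of the statement lies entirely in transitivity.

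For transitivity, the central tool is the classical description of intertwiners between operators in $\m{B}_1(\Omega)$. If $t_i$ denotes a holomorphic frame of $E_{T_i}$, then for any $X\in\Ker\tau_{T_1,T_2}$ the identity $T_1Xt_2(w)=wXt_2(w)$ shows that $Xt_2(w)\in\Ker(T_1-w)=\mathbb{C}t_1(w)$, so there is a scalar function $\phi_X\colon\Omega\to\mathbb{C}$ with
\begin{align*}
Xt_2(w)=\phi_X(w)t_1(w),\qquad w\in\Omega.
\end{align*}
Holomorphy of $\phi_X$ follows from comparing two holomorphic sections in a local trivialization of the rank-one bundle $E_{T_1}$, and since $\bigvee_{w\in\Omega}t_2(w)=\m{H}$, one has $X=0$ iff $\phi_X\equiv 0$.

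Now assume $T_1\prec T_2\prec T_3$, and pick nonzero $X\in\Ker\tau_{T_1,T_2}$ and $Y\in\Ker\tau_{T_2,T_3}$ with symbols $\phi_X,\phi_Y\not\equiv 0$. For the first condition of $T_1\prec T_3$, a direct computation gives $XY\,t_3(w)=\phi_X(w)\phi_Y(w)t_1(w)$, so $XY\in\Ker\tau_{T_1,T_3}$ has symbol $\phi_X\phi_Y$; since $\Omega$ is connected and both factors are non-vanishing holomorphic functions, $\phi_X\phi_Y\not\equiv 0$, hence $XY\neq 0$. For the second condition, take any $V\in\Ker\tau_{T_3,T_1}$ with symbol $\phi_V$. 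Then $YV\in\Ker\tau_{T_2,T_1}$ has symbol $\phi_V\phi_Y$, and by hypothesis $\Ker\tau_{T_2,T_1}=\{0\}$, so $\phi_V\phi_Y\equiv 0$. The identity principle on the connected domain $\Omega$ (combined with $\phi_Y\not\equiv 0$) forces $\phi_V\equiv 0$, i.e.\ $V=0$. This gives $\Ker\tau_{T_3,T_1}=\{0\}$, completing $T_1\prec T_3$.

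The only place that warrants real care is the symbol calculus for intertwiners between rank-one Cowen-Douglas operators, but this is a standard piece of Cowen-Douglas machinery; once the correspondence $X\leftrightarrow\phi_X$ and the multiplicativity $\phi_{XY}=\phi_X\phi_Y$ are in hand, the remainder is a purely formal argument combining composition of intertwiners with the identity principle for holomorphic functions on a connected domain.
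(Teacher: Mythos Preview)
Your argument is correct. The paper does not supply its own proof of this proposition; it simply cites the external reference \cite{TGJ}. So there is nothing in the present paper to compare against, and your self-contained verification of irreflexivity, asymmetry, and transitivity via the symbol calculus for rank-one intertwiners stands on its own.

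Two small remarks. First, when you write ``both factors are non-vanishing holomorphic functions'' you mean ``not identically zero'' rather than ``nowhere zero''; the identity principle on the connected domain $\Omega$ is exactly what makes $\phi_X\phi_Y\not\equiv 0$ and $\phi_V\phi_Y\equiv 0\Rightarrow\phi_V\equiv 0$ work, as you in fact use. Second, the correspondence $X\leftrightarrow\phi_X$ you invoke is indeed standard Cowen--Douglas machinery (it is the content of Proposition~1.20 in \cite{CowDou78}, and the present paper appeals to it in the remark following Definition~\ref{CFB1}), so your reliance on it is entirely appropriate.
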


\begin{lem}\label{homo1} \cite{JJK19}
Let $T_j, j=1,2$ be the adjoint of the multiplication operator $M_z^{(\lambda_j)}$ acting on $\mathbb{A}_{\lambda_j}^2$.  
Then $T_1\prec T_2$ if and only if $\lambda_1<\lambda_2.$ 

\end{lem}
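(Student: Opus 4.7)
The plan is to exploit the Cowen-Douglas / reproducing-kernel structure and to reduce intertwiners to holomorphic multipliers. Throughout, let $K_{\lambda_j,w}(z)=(1-zw)^{-\lambda_j}$ be the holomorphic frame of $E_{T_j}$, so that $T_j^{*}K_{\lambda_j,w}=\bar w K_{\lambda_j,w}$, and note $\|K_{\lambda_j,w}\|^{2}=(1-|w|^{2})^{-\lambda_j}$ up to a positive constant.

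First, I would record the dictionary between intertwiners and holomorphic symbols. Any bounded $Y:\mathbb{A}^{2}_{\lambda_1}\to\mathbb{A}^{2}_{\lambda_2}$ with $T_{2}Y=YT_{1}$ must send each eigenvector of $T_{1}$ to an eigenvector of $T_{2}$ with the same eigenvalue; since each eigenspace is one-dimensional, there is a (necessarily holomorphic) function $\psi$ on $\mathbb{D}$ with $YK_{\lambda_1,w}=\psi(w)K_{\lambda_2,w}$, and $Y$ is determined by $\psi$ because the eigenvectors span. Exactly the same remark applies to $\Ker\tau_{T_1,T_2}$ with a symbol $\phi$.

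Next I would handle the sufficiency $\lambda_1<\lambda_2\Rightarrow T_1\prec T_2$. Since $(1-|z|^{2})^{\lambda_2}\leq(1-|z|^{2})^{\lambda_1}$ on $\mathbb{D}$, the identity map gives a bounded inclusion $\iota:\mathbb{A}^{2}_{\lambda_1}\hookrightarrow\mathbb{A}^{2}_{\lambda_2}$, which obviously commutes with $M_{z}$, i.e., $\iota M_{z}^{(\lambda_1)}=M_{z}^{(\lambda_2)}\iota$. Taking adjoints gives $T_{1}\iota^{*}=\iota^{*}T_{2}$, and $\iota^{*}\neq 0$, so $\Ker\tau_{T_{1},T_{2}}\neq\{0\}$. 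For the other direction of the ordering, suppose $Y\in\Ker\tau_{T_2,T_1}$ with symbol $\psi$. Applying $\|Y\cdot\|_{\lambda_2}\leq\|Y\|\cdot\|\cdot\|_{\lambda_1}$ to the eigenvectors $K_{\lambda_1,w}$ yields
\begin{equation*}
|\psi(w)|^{2}\;\leq\;\|Y\|^{2}\,(1-|w|^{2})^{\lambda_{2}-\lambda_{1}},\qquad w\in\mathbb{D}.
\end{equation*}
Since $\lambda_{2}-\lambda_{1}>0$, $\psi$ is a bounded holomorphic function on $\mathbb{D}$ whose modulus tends to $0$ as $|w|\to 1$; by the Fatou boundary-uniqueness theorem $\psi\equiv 0$, hence $Y=0$.

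Finally, I would settle the converse by contrapositive. If $\lambda_{1}=\lambda_{2}$ then $T_{1}=T_{2}$ (up to unitary equivalence), so the identity lies in both $\Ker\tau_{T_{1},T_{2}}$ and $\Ker\tau_{T_{2},T_{1}}$ and the ordering is not strict. If $\lambda_{1}>\lambda_{2}$, the symbol argument applied in the opposite direction gives $|\phi(w)|^{2}\leq\|X\|^{2}(1-|w|^{2})^{\lambda_{1}-\lambda_{2}}$ for any $X\in\Ker\tau_{T_{1},T_{2}}$, forcing $\phi\equiv 0$ and $X=0$, so again $T_{1}\not\prec T_{2}$.

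The only nontrivial analytic input is the boundary-uniqueness step; the main conceptual obstacle is to verify carefully that the eigenvector-to-symbol assignment is well-defined on a dense subspace (spanned by reproducing kernels) and that the symbol is genuinely holomorphic. Once that dictionary is in place, the proof reduces to the elementary norm comparison between $\|K_{\lambda_1,w}\|$ and $\|K_{\lambda_2,w}\|$ combined with the inclusion $\mathbb{A}^{2}_{\lambda_{1}}\subset\mathbb{A}^{2}_{\lambda_{2}}$.
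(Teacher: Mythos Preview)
The paper does not prove this lemma; it is imported verbatim from \cite{JJK19} and stated without argument. So there is no ``paper's own proof'' to compare against. What can be said is that the surrounding results in the paper (Lemma~\ref{homo2}, Lemma~\ref{order}) are all proved via the weighted-shift matrix model: one writes $T_j$ as an upper-triangular matrix with weights $\sqrt{k/(k+\lambda_j-1)}$, computes the diagonal entries of a putative intertwiner as products of weight ratios, and checks whether those products stay bounded. Presumably the proof in \cite{JJK19} follows the same template.

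Your approach is genuinely different and more conceptual: you work directly with the Cowen--Douglas frame and reduce the problem to a pointwise norm estimate on the holomorphic symbol of an intertwiner. The argument is correct. The inclusion $\iota:\mathbb{A}^2_{\lambda_1}\hookrightarrow\mathbb{A}^2_{\lambda_2}$ (bounded because $(1-|z|^2)^{\lambda_2}\le(1-|z|^2)^{\lambda_1}$) produces the nonzero element of $\Ker\tau_{T_1,T_2}$, and the bound $|\psi(w)|^2\le C(1-|w|^2)^{\lambda_2-\lambda_1}$ forces any $Y\in\Ker\tau_{T_2,T_1}$ to vanish. One does not even need Fatou's theorem here: $\psi$ is bounded and $|\psi(w)|\to 0$ uniformly as $|w|\to 1$, so the maximum principle already gives $\psi\equiv 0$. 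The contrapositive for $\lambda_1\ge\lambda_2$ is handled symmetrically.

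Two minor corrections. First, in your opening sentence you write $T_j^{*}K_{\lambda_j,w}=\bar w K_{\lambda_j,w}$; with your holomorphic parametrisation $K_{\lambda_j,w}(z)=(1-zw)^{-\lambda_j}$ this should read $T_jK_{\lambda_j,w}=wK_{\lambda_j,w}$ (the frame consists of eigenvectors of $T_j$, not of $T_j^*=M_z$). The rest of your argument already uses the correct eigenvector property, so this is only a typo. Second, the holomorphy of the symbol $\psi$ deserves one line: since $w\mapsto K_{\lambda_1,w}$ and $w\mapsto K_{\lambda_2,w}$ are holomorphic $\mathcal{H}$-valued maps and the latter is nowhere zero, $\psi(w)=\langle YK_{\lambda_1,w},v\rangle/\langle K_{\lambda_2,w},v\rangle$ for any $v$ with $\langle K_{\lambda_2,w},v\rangle\neq 0$ locally, which is visibly holomorphic.
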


\begin{lem}\label{homo2}\cite{JJK19}
Let $T_k= M_z^{(\lambda_k)*}, k=1,2$ be the adjoint of the multiplication operator $M_z^{(\lambda_k)}$ on $\mathbb{A}_{\lambda_k}^2$.  
If $\lambda_2-\lambda_1<2$, then $(T_1,T_2)$ satisfies Property $(H)$.
\end{lem}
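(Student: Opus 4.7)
The plan is to classify $\Ker \tau_{T_1,T_2}$ explicitly as a family of multiplier-type operators, then turn the range condition $X \in \Ran \tau_{T_1,T_2}$ into a differential identity on monomials, and finally compare growth rates of $\|z^n\|_{\lambda}$ in the weighted Bergman spaces to force $X = 0$ when $\lambda_2 - \lambda_1 < 2$.

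First I would identify $\Ker \tau_{T_1,T_2}$. If $T_1 X = X T_2$, taking adjoints gives $X^{*} M_z^{(\lambda_1)} = M_z^{(\lambda_2)} X^{*}$, so $X^{*}:\mathbb{A}_{\lambda_1}^2\to \mathbb{A}_{\lambda_2}^2$ intertwines multiplication by $z$. Applying $X^{*}$ to $1$ and iterating gives $X^{*} z^n = z^n \cdot \psi$ with $\psi := X^{*} 1 \in \mathbb{A}_{\lambda_2}^2$. Since polynomials are dense in $\mathbb{A}_{\lambda_1}^2$, this forces $X^{*} = M_\psi$, and hence $X = M_\psi^{*}$ for some analytic function $\psi$ on $\mathbb{D}$.

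Next, if additionally $X = T_1 Z - Z T_2$ for some bounded $Z$, set $W := Z^{*}$. Adjoining produces $M_\psi = W M_z^{(\lambda_1)} - M_z^{(\lambda_2)} W$. Evaluating both sides on $z^n$ yields the recursion $W z^{n+1} = z\cdot Wz^n + \psi z^n$ with $W1 = f\in \mathbb{A}_{\lambda_2}^2$, and a short induction gives the key identity
\[
W z^n \;=\; n\,\psi\,z^{n-1} \;+\; f\,z^n.
\]

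Finally I use a growth comparison. A Beta-function calculation yields $\|z^n\|_{\lambda}^2 = (\lambda+1)B(n+1,\lambda+1)\sim C_\lambda n^{-\lambda-1}$. Assume $\psi \not\equiv 0$ and let $c_{k_0}$ be its lowest nonzero Taylor coefficient. Expanding $Wz^n$ in the orthogonal basis $\{z^m\}$ of $\mathbb{A}_{\lambda_2}^2$, the coefficient of $z^{n+k_0-1}$ is $n c_{k_0} + (\text{a fixed Taylor coefficient of }f)$, which is dominated by $n c_{k_0}$ for $n$ large. Therefore
\[
\|Wz^n\|_{\lambda_2}^2 \;\geq\; |nc_{k_0} + O(1)|^{2}\,\|z^{n+k_0-1}\|_{\lambda_2}^2 \;\gtrsim\; n^{1-\lambda_2}.
\]
On the other hand, boundedness of $W$ gives $\|Wz^n\|_{\lambda_2}^2 \leq \|W\|^2 \|z^n\|_{\lambda_1}^2 \lesssim n^{-\lambda_1-1}$, so $n^{2 + \lambda_1 - \lambda_2}$ must stay bounded, forcing $\lambda_2 - \lambda_1 \geq 2$. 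Under the hypothesis $\lambda_2 - \lambda_1 < 2$ this is impossible, so $\psi \equiv 0$ and $X = 0$. The main obstacle is the lower bound in this last step: one must carefully isolate the leading $n c_{k_0}$ behavior from the fixed $f$-contribution. This is possible because $f = W1$ depends only on $Z$ (not on $n$), so its Taylor coefficients are constants against which the factor $n$ eventually dominates. Everything else --- density of polynomials, the intertwining-to-multiplier step, and the Stirling/Beta-function asymptotics --- is routine.
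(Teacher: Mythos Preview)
Your argument is correct and, at its core, is the adjoint reformulation of what the paper does. The paper works in the orthonormal basis $\{e_k\}$ and uses the matrix form of $T_1,T_2$ as backward weighted shifts with weights $a_{k,k+1}=\sqrt{k/(k+\lambda_1-1)}$ and $b_{k,k+1}=\sqrt{k/(k+\lambda_2-1)}$; Lemma~\ref{unhappy} then gives the sub-diagonal entries of the preimage $X$ (your $Z$) explicitly as
\[
x_{k+1,k}\;=\;c\,k\,\frac{\prod_{\ell\le k}b_{\ell,\ell+1}}{\prod_{\ell\le k+1}a_{\ell,\ell+1}}\;\sim\;c'\,k^{\,1+(\lambda_1-\lambda_2)/2},
\]
which diverges precisely when $\lambda_2-\lambda_1<2$, contradicting boundedness. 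The linear factor $k$ there is exactly the $n$ you pick up from your recursion $Wz^{n+1}=zWz^n+\psi z^n$ leading to $Wz^n=n\psi z^{n-1}+fz^n$; your norm comparison $n^{2}\|z^{n+k_0-1}\|_{\lambda_2}^2$ versus $\|z^n\|_{\lambda_1}^2$ is the square of the same ratio of weight products. What you gain is a self-contained, coordinate-free argument: the paper outsources the asymptotics to Lemma~4.3 of \cite{JJM17} and only handles the principal diagonal of $Y$ via $y_{1,1}$, whereas your ``lowest coefficient $c_{k_0}$'' device simultaneously handles the case $\psi(0)=0$ without a separate induction. What the paper's matrix route buys is that it plugs directly into the general machinery of Lemma~\ref{unhappy}, which is reused verbatim later (e.g.\ in Theorem~\ref{example} for $f(T_1),f(T_2)$), where your multiplier identification $X^*=M_\psi$ is no longer available. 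One small remark: the paper's weights correspond to the kernel normalization $K(z,w)=(1-z\bar w)^{-\lambda}$, giving $\|z^n\|^2\sim n^{1-\lambda}$ rather than the $n^{-\lambda-1}$ you quote from the measure $d\mu_\lambda$; the two conventions differ by a shift in $\lambda$, so the threshold $\lambda_2-\lambda_1=2$ is unaffected, but you may want to align with the weight sequence the paper actually uses.
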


\begin{proof} 
Suppose that there exists an non-zero operator $Y$ such that $T_1Y=YT_2$ and $Y=T_1X-XT_2$.  Notice that 
$$T_1=\begin{pmatrix}
0& \sqrt{\frac{1}{\lambda_1}} & 0&\cdots&0&\cdots\\
&0&\sqrt{\frac{2}{\lambda_1+1}} &\cdots&0 &\cdots\\
&&\ddots&\ddots&\vdots&\cdots\\
&&&0&\sqrt{\frac{k}{k+\lambda_1-1}}&\cdots\\
&&&&0&\ddots\\
&&&&&\ddots\\
\end{pmatrix}, T_2=\begin{pmatrix}
0& \sqrt{\frac{1}{\lambda_2}} & 0&\cdots&0&\cdots\\
&0&\sqrt{\frac{2}{\lambda_2+1}} &\cdots&0 &\cdots\\
&&\ddots&\ddots&\vdots&\cdots\\
&&&0&\sqrt{\frac{k}{k+\lambda_2-1}}&\cdots\\
&&&&0&\ddots\\
&&&&&\ddots\\
\end{pmatrix}. $$

By Lemma \ref{unhappy} and Lemma 4.3 in \cite{JJM17}, we know that $\lim\limits_{k\rightarrow \infty} x_{k,k+1}=\infty$ in the case of $\lambda_2-\lambda_1<2$.

\end{proof}

\begin{thm}\label{example}
Suppose that $\Omega$ is simply connected.
Then there exists $T\in \mathcal{NCFB}_n(\Omega)$ such that
 \begin{enumerate}
\item $T$ is strongly irreducible;
\item $\sigma(T)=\overline{\Omega}$, $\rho_F(T)\cap \sigma(T)=\Omega$;
\item  $\ind(w-T)=n, w\in \Omega$,
\end{enumerate}
where $\rho_{F}(T)$ denotes the Fredholm domain of $T$ and $\ind (T)$ is the Fredholm index of $T$.
In particular, $\m{NCFB}_n(\Omega)\neq \emptyset$.
\end{thm}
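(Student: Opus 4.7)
My plan is to construct $T$ explicitly as an upper-triangular block operator whose diagonal blocks are adjoints of multiplication-by-$z$ on weighted Bergman spaces and whose off-diagonal blocks are (suitable) compact intertwiners. I will first do $\Omega=\mathbb{D}$ in full detail and then transfer to a general simply-connected $\Omega$ via a Riemann map.

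For $\Omega=\mathbb{D}$, fix $0<\lambda_1<\lambda_2<\cdots<\lambda_n$ with $\lambda_{j+1}-\lambda_j<2$ for every $j$, and set $T_{j,j}:=(M_z^{(\lambda_j)})^*$ on $\mathbb{A}^2_{\lambda_j}$. Each $T_{j,j}$ is homogeneous, hence lies in $\m{B}_1(\mathbb{D})$ and is essentially normal (by the earlier proposition on homogeneous operators), with $\sigma(T_{j,j})=\overline{\mathbb{D}}$, $\sigma(\m{Q}(T_{j,j}))=\partial\mathbb{D}$, and $\ind(w-T_{j,j})=1$ on $\mathbb{D}$. Since $\lambda_j<\lambda_{j+1}$, the canonical inclusion $\iota_j:\mathbb{A}^2_{\lambda_j}\hookrightarrow \mathbb{A}^2_{\lambda_{j+1}}$ is a bounded module map (it commutes with multiplication by $z$), and in the orthonormal monomial bases its matrix is diagonal with entries $\|z^k\|_{\mathbb{A}^2_{\lambda_{j+1}}}/\|z^k\|_{\mathbb{A}^2_{\lambda_j}}$ decaying like $k^{(\lambda_j-\lambda_{j+1})/2}$ by Stirling; hence $\iota_j$ is compact. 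I take $T_{j,j+1}:=\iota_j^{\,*}$, which is compact, nonzero, and intertwines $T_{j,j}$ and $T_{j+1,j+1}$. Lemmas \ref{homo1} and \ref{homo2} give $T_{j,j}\prec T_{j+1,j+1}$ and Property $(H)$ for each adjacent pair. Then fill in $T_{k,j}:=T_{k,k+1}T_{k+1,k+2}\cdots T_{j-1,j}$ for $k+1<j$ (i.e.\ $C_{k,j}=I$ in Definition \ref{CFB1}(2)); every strictly upper-triangular entry of $T$ is then compact.

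Next I verify the conclusion. The relations $T_{k,k}T_{k,j}=T_{k,j}T_{j,j}$ follow by induction from the adjacent intertwinings, so $\diag T\in\{T\}'$ and $T\in\m{CFB}_n(\mathbb{D})$; the summands are mutually orthogonal, so the projections $P_j$ satisfy $\m{Q}(P_j)^*=\m{Q}(P_j)$, and $T$ is essentially normal because $\m{Q}(T)$ is block-diagonal with essentially normal diagonal entries. Hence $T\in\m{NCFB}_n(\mathbb{D})$. Strong irreducibility follows from Lemma \ref{SI} together with $T_{j,j+1}\neq 0$. Since compact perturbations preserve both the essential spectrum and the Fredholm index, we get $\sigma(\m{Q}(T))=\bigcup_j\sigma(\m{Q}(T_{j,j}))=\partial\mathbb{D}$ and $\ind(T-w)=\sum_j\ind(T_{j,j}-w)=n$ on $\mathbb{D}$, which forces $\sigma(T)=\overline{\mathbb{D}}$ and $\rho_F(T)\cap\sigma(T)=\mathbb{D}$.

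For a general bounded simply-connected $\Omega$, fix a Riemann map $\varphi:\mathbb{D}\to\Omega$ and repeat the construction on weighted Bergman-type spaces on $\Omega$ obtained by pulling back $\mathbb{A}^2_{\lambda_j}$ along $\varphi$; multiplication by $z$ on these transferred spaces is unitarily equivalent to $M_\varphi$ on $\mathbb{A}^2_{\lambda_j}$, and the spectrum becomes $\overline{\Omega}$. The main obstacle, and really the only substantive new step beyond the disk, is preserving essential normality of the diagonal entries after the transfer, which reduces to showing that the self-commutator $[M_\varphi^{\,*},M_\varphi]$ is compact on each $\mathbb{A}^2_{\lambda_j}$; this follows from the compactness of Hankel operators with analytic symbol on weighted Bergman spaces. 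All remaining verifications (intertwining relations, Property $(H)$, compactness of the off-diagonal products, and the Fredholm-index and spectrum computations) transfer verbatim, yielding the desired $T\in\m{NCFB}_n(\Omega)$.
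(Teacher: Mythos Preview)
Your approach is essentially the same as the paper's. For $\Omega=\mathbb{D}$ the two constructions literally coincide: your $\iota_j^{\,*}$, written in the monomial orthonormal bases, is exactly the diagonal operator $T_{k,k+1}$ the paper defines by the explicit weight ratios, and the rest of the disk verification (Lemmas~\ref{homo1}, \ref{homo2}, \ref{SI}) is identical.

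For general simply-connected $\Omega$ both arguments transfer the disk example via a Riemann map, and your pulled-back spaces give operators unitarily equivalent to the paper's $f(T)$. Two small points deserve tightening. First, you assert that Property~$(H)$ ``transfers verbatim''; the paper does not take this for granted but reverifies it by an explicit matrix computation using Lemma~\ref{unhappy}, and indeed the implication $Y\in\Ran\tau_{g(T_{j,j}),g(T_{j+1,j+1})}\Rightarrow Y\in\Ran\tau_{T_{j,j},T_{j+1,j+1}}$ is not automatic under a holomorphic change of variable, so some argument is needed here. Second, your justification of essential normality of the transferred diagonals via ``compactness of Hankel operators with analytic symbol'' is misstated (Hankel operators with analytic symbol vanish; the relevant operator is $H_{\bar\varphi}$), and the paper instead argues more directly that normality of $\m{Q}(T)$ in the Calkin algebra passes to $\m{Q}(f(T))$ by functional calculus, which sidesteps the Hankel detour entirely.
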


\begin{proof} 
First, we assume that $\Omega=\mathbb{D}$.
Let $T_{k,k}= M^{(\lambda_k)*}_z, 1\leq k\leq n $ acting on $\mathbb{A}_{\lambda_k}^2$ in Lemma \ref{homo1},  and $0<\lambda_{k+1}-\lambda_k<2$, $1\leq k\leq n-1$. 
It is well known that $T_{k,k}$ is a backward shift operator with weighted sequence $\displaystyle \left\{\left(\frac{n}{n+\lambda_k-1}\right)^{\frac{1}{2}}\right\}^{\infty}_{n=1}$.  
We define
$T_{k,k+1}, 1\leq k\leq n-1$ to be
$$T_{k,k+1}(e^{(k+1)}_n)= \frac{\prod\limits_{j=1}^n \displaystyle \left(\frac{j}{j+\lambda_{k+1}-1}\right)^{\frac{1}{2}}}{\prod\limits_{j=1}^n\displaystyle \left(\frac{j}{j+\lambda_{k}-1}\right)^{\frac{1}{2}}}e^{(k)}_n,$$
where $\{e^{(k)}_n\}^{\infty}_{n=1}$ be an orthogonal normal basis of the Hilbert space $\mathbb{A}_{\lambda_k}^2.$ 
Let 
$$T_{k,j}=T_{k,k+1}T_{k+1,k+2}\cdots T_{j-1,j}, j>k,$$
and $T_{k,j}=0, j<k$. 
Then by a directly computation,  we can see that 
$$T_{k,k}T_{k,k+1}=T_{k,k+1}T_{k+1,k+1}, \quad T_{k,k}T_{k,j}=T_{k,j}T_{j+1,j+1}, k<j.$$

By Lemma \ref{homo2}, we can see that $(T_{k,k},T_{j,j})$ satisfy the Property (H), for any $k<j$. 
Thus, we can see that $T\in  \mathcal{CFB}_n(\mathbb{D})$.
Notice that each $T_{k,k+1}$ is non-zero. 
By Lemma \ref{SI}, $T$ is a strongly irreducible operator. 

Notice that $T_{k,k}T^*_{k,k}=I_{\mathcal{H}_k}-K_1^{(k)}, $ and $T^*_{k,k}T_{k,k}=I_{\mathcal{H}_k}-K_2^{(k)}$, where $K_1^{(k)}$ and $K_2^{(k)}$ are both rank-one operators, we can see that each $T_{k,k}$ is essentially normal operator. 
Since 
$$\lim\limits_{n\rightarrow \infty} \frac{\prod\limits_{j=1}^n\left(\frac{j}{j+\lambda_{k+1}-1}\right)^{\frac{1}{2}}}{\prod\limits_{j=1}^n\left(\frac{j}{j+\lambda_{k}-1}\right)^{\frac{1}{2}}}=0, \quad \text{when}\quad \lambda_k<\lambda_{k+1},$$ 
then each $T_{k,k+1}$ is compact operator. 
Thus each $T_{k,j}$ is also compact. 
We can see that $T\in \mathcal{NCFB}_n(\mathbb{D})$. 

For simply connected region $\Omega\subset \mathbb{C}$, by Riemann mapping theorem, there exists an analytic function $f:\Omega  \rightarrow \mathbb{D}$ such that $f$ is one-to-one and $f(z_0)=0, f'(z_0)>0$ for some $z_0\in \Omega$. 
By a similar argument, we can see that when one replace $f(T)$ instead of $T$ in the previous proceed, the 
same conclusion is  also valid. 
In fact, let  $T\in \mathcal{NCFB}_n(\mathbb{D})$ defined above. 
Then we have that 
$$f(T)=\left ( \begin{smallmatrix}f(T_{1,1}) & f^{\prime}(T_{1,1})T_{1,2}& f_{1,3}(f(T_{1,1}))T_{1,2}T_{2,3}& \cdots & f_{1,n}(f(T_{1,1}))T_{1,2}\cdots T_{n-1,n}\\
0&f(T_{2,2})&f^{\prime}(T_{2,2})T_{2,3}&\cdots&f_{2,n}(f(T_{2,2}))T_{2,3}\cdots T_{n-1,n}\\
\vdots&\ddots&\ddots&\ddots&\vdots\\
0&\cdots&0&f(T_{n-1,n-1})&f^{\prime}(T_{n-1,n-1})T_{n-1,n}\\
0&\cdots&\cdots&0&f(T_{n,n})
\end{smallmatrix}\right ),$$
where $f_{k,j}\in \mathcal{H}^{\infty}(\Omega).$  
Since $T$ is essentially normal, that means $\m{Q}(T)\m{Q}(T^*)=\m{Q}(T^*)\m{Q}(T)$. 
It follows that 
$$\m{Q}(f(T))\m{Q}(f^*(T))=\m{Q}(f^*(T))\m{Q}(f(T)).$$
Thus, $f(T)$ is also essentially normal.  
Since each $T_{k,j}, k<j$ is  a compact operator, then we have that  $f_{k,j}(f(T_{k,k})T_{k,k+1}\cdots T_{j-1,j}$ is also a compact operator.  
Since $T\in \mathcal{NCFB}_n(\mathbb{D})$, then it is easy to see that the condition (2) in Definition \ref{CFB1} is also satisfied. 
Thus, we only need to prove that $f(T)$ satisfies the Property $(H)$ when  $0<\lambda_{k+1}-\lambda_k<2$. 

Without loss of generality, we only prove that if $f(T_{1,1})Y=Yf(T_{2,2})$ and $Y=f(T_{1,1})X-Xf(T_{2,2})$, then $Y=0$.
Suppose that $f(z)=\sum\limits_{k=0}^{\infty}a_kz^k$ and $a_0=0$. 
In fact, if $a_0\neq 0$, then we can consider $f(T_{1,1})-a_0I$ and $f(T_{2,2})-a_0I$. 
Thus, we can assume that 
\begin{equation}\label{uper1}f(T_{1,1})=\begin{pmatrix}
0& a_{1,2} & a_{1,3}&\cdots&a_{1,k}&\cdots\\
&0&a_{2,3}&\cdots&a_{2,k} &\cdots\\
&&\ddots&\ddots&\vdots&\cdots\\
&&&0&a_{k-1,k}&\cdots\\
&&&&0&\ddots\\
&&&&&\ddots\\
\end{pmatrix}, f(T_{2,2})=\begin{pmatrix}
0& b_{1,2} & b_{1,3}&\cdots&b_{1,k}&\cdots\\
&0&b_{2,3}&\cdots&b_{2,k} &\cdots\\
&&\ddots&\ddots&\vdots&\cdots\\
&&&0&b_{k-1,k}&\cdots\\
&&&&0&\ddots\\
&&&&&\ddots\\
\end{pmatrix}, 
\end{equation}
where $a_{k,k+1}=a_1\sqrt{\frac{k}{k+\lambda_1-1}}$ and $b_{k,k+1}=a_1\sqrt{\frac{k}{k+\lambda_2-1}}$. By Lemma \ref{unhappy}, we have that 
$$x_{k+1,k}=\frac{n \prod\limits^{k}_{i=1}b_{i,i+1}}{\prod\limits^{k+1}_{i=1}a_{i,i+1}}=\frac{n \prod\limits^{k}_{i=1}a_1\sqrt{\frac{i}{i+\lambda_2-1}}}{\prod\limits^{k+1}_{i=1}a_1\sqrt{\frac{i}{i+\lambda_1-1}}}.$$
By Lemma \ref{homo2} and Lemma 4.3 in \cite{JJM17}, we know that $\lim\limits_{k\rightarrow \infty} x_{k,k+1}=\infty$. Thus, $Y=0$.
\end{proof}

\subsection{($\m{U}+\m{K}$)-Orbits}
In \cite{JJK19}, the first two authors and Keshari prove that $\m{CFB}_n(\Omega)$ is norm dense in $\m{B}_n(\Omega)$.
Since it is quite difficult to describe ($\m{U+K}$)-orbit of an arbitrary operator, we would be interested in obtain a similar result for essentially normal operators. 
But by the results in \cite{JiaWan98} and Theorem \ref{example}, we see that the set $\m{NCFB}_n(\Omega)$ contains many nontrivial operators.

\begin{defn}
Suppose $\Omega$ is simply connected.
Let $\mathcal{A}_n(\Omega)$ denote the class of operators $S$, each of which satisfies that 
 \begin{enumerate}
\item[(1)] $S$ is essentially normal;
\item[(2)] $\sigma(S)=\overline{\Omega}$, $\rho_F(S)\cap \sigma(S)=\Omega,$
\item[(3)] $ind(S-w)=n, w\in \Omega$, $\dim\Ker(S-w)=0, w\in \Omega$.
\end{enumerate}

\end{defn}

\begin{rem}
By the definition of the Cowen-Douglas operator, it is clear that $\m{EN}\cap \m{B}_n(\Omega)\subset \m{A}_n(\Omega)$.
\end{rem}

\begin{prop}\cite{JiaWan98}\label{JWL}
Suppose $\Omega$ is simply connected. Let $S\in \mathcal{A}_n(\Omega)$. Then the closure of ($\m{U+K}$)-orbits of $S$ consists of all operators $A$ satisfying 
 \begin{enumerate}
\item[(1)] $A$ is essentially normal;
\item[(2)] $\sigma(A)=\overline{\Omega}$, $\rho_F(A)\cap \sigma(A)=\Omega,$
\item[(3)] $ind(w-A)=n, w\in \Omega$,
\end{enumerate}
 where $\rho_{F}(A)$ denotes the Fredholm domain of $A$.
\end{prop}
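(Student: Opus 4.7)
The plan is to establish both inclusions: that every element of the closure of the $(\m{U+K})$-orbit of $S$ satisfies (1)–(3), and that every operator $A$ satisfying (1)–(3) lies in this closure.

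For the easy direction, I first observe that if $A=(U+K)S(U+K)^{-1}$ then in the Calkin algebra $\m{Q}(A)=\m{Q}(U)\m{Q}(S)\m{Q}(U)^{*}$, since $\m{Q}(U+K)=\m{Q}(U)$ is a unitary element of $\m{A}(\m{H})$. Hence essential normality, essential spectrum, and the Fredholm index on the Fredholm domain are all $(\m{U+K})$-invariants. Passing to norm limits, essential normality is preserved because $\m{K}(\m{H})$ is norm-closed; the essential spectrum varies continuously for essentially normal operators because $\m{Q}$ is a contraction and the spectrum of a normal element of a $C^{*}$-algebra is norm-continuous; and the Fredholm index is locally constant on its Fredholm domain, hence stable under small perturbations. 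Upper semicontinuity of the spectrum gives $\sigma(A)\subseteq\overline{\Omega}$, while $\sigma_{e}(A)=\partial\Omega$ together with $\ind(w-A)=n\ne 0$ for $w\in\Omega$ forces $\sigma(A)\supseteq\partial\Omega\cup\Omega=\overline{\Omega}$, yielding equality.

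For the nontrivial direction, I would invoke the Brown–Douglas–Fillmore theorem: any two essentially normal operators with the same essential spectrum and the same index function are essentially unitarily equivalent, so there exist a unitary $V$ and a compact $K_{0}$ with $A=VSV^{*}+K_{0}$. Since $VSV^{*}$ already lies in the $(\m{U+K})$-orbit of $S$ (take $U=V,\;K=0$, noting $V+0$ is invertible), the task reduces to showing that for every $T$ in this orbit and every compact $K$, the operator $T+K$ lies in the norm closure of the $(\m{U+K})$-orbit of $T$. The basic identity
\begin{equation*}
(I+L)T(I+L)^{-1}-T=[L,T](I+L)^{-1}
\end{equation*}
reduces the approximation to exhibiting compact operators $L_{m}$ with $I+L_{m}$ invertible such that $[L_{m},T]\to K$ in norm. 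Here the rich spectral structure of $T$ enters: $\m{Q}(T)$ is normal with essential spectrum $\partial\Omega$ (a continuum), so Voiculescu's noncommutative Weyl–von Neumann theorem lets one absorb trivial summands and diagonalize $\m{Q}(T)$ up to compact perturbation, and the nontrivial Fredholm index $n$ on $\Omega$ supplies enough ``spectral room'' to realize any prescribed compact perturbation as a compact commutator with $T$ modulo an arbitrarily small error.

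The main obstacle is this last approximation step: one must simultaneously (a) arrange that $[L_{m},T]$ approximates the target compact, and (b) ensure that $I+L_{m}$ remains invertible (not merely Fredholm) throughout the sequence, so that the conjugating operator genuinely has the form $U+K$ with $U+K$ invertible rather than only unitary-plus-compact. Controlling invertibility forces $L_{m}$ to be chosen away from the spectral values $-1$ of $L_{m}$, which in turn requires a Berg–Davidson style concrete construction of the approximating commutators that keeps $\|L_{m}\|$ uniformly bounded and bounded away from the obstruction to invertibility — this is where the constructive techniques of \cite{BerDav91} and \cite{JiaWan98} would be needed in detail.
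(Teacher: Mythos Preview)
The paper does not prove this proposition at all: it is stated with the citation \cite{JiaWan98} and used as a black box (the only ``proof'' nearby is the one-line derivation of Theorem~\ref{thm:ukorbit} from it together with Theorem~\ref{example}). So there is no paper proof to compare your proposal against.

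That said, your outline is reasonable in spirit but not a proof. The forward direction is fine. For the converse, your reduction via BDF to ``$T+K$ lies in the closure of the $(\m{U+K})$-orbit of $T$ for every compact $K$'' is the right shape, but this statement is \emph{false} for general essentially normal $T$; it is precisely here that the hypotheses $S\in\m{A}_n(\Omega)$ and $\Omega$ simply connected must be used. Your proposed mechanism --- approximating $K$ by commutators $[L_m,T]$ with $I+L_m$ invertible --- does not by itself exploit either hypothesis, and you correctly flag that the construction would require the detailed machinery of \cite{JiaWan98} (which, after all, is the reference the paper cites for the result). In particular, the simple connectivity is needed to pass via a Riemann map to a model on the disk where the approximation can be carried out explicitly; without it the index data on $\rho_F(T)\cap\sigma(T)$ need not determine the $(\m{U+K})$-closure. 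As written, your proposal identifies the obstacle but does not overcome it, which is consistent with the paper's decision to cite rather than prove.
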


\begin{thm}\label{thm:ukorbit}
Suppose that $\Omega$ is simply connected and $T$ is essentially normal.
If 
\begin{itemize}
\item[(1)] $\sigma(T)=\overline{\Omega}$, $\rho_F(T)\cap \sigma(A)=\Omega,$
\item[(2)] $ind(w-T)=n, w\in \Omega$,
\end{itemize}
then $T\in \overline{\m{NCFB}_n(\Omega)}$.
\end{thm}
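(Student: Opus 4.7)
The plan is to combine Theorem \ref{example} with Proposition \ref{JWL} (the Jiang--Wang description of $(\m{U+K})$-orbit closures) and to verify that the class $\m{NCFB}_n(\Omega)$ is stable under $(\m{U+K})$-equivalence. The key observation is that Proposition \ref{JWL} already identifies the closure of the $(\m{U+K})$-orbit of any $S\in \m{A}_n(\Omega)$ with exactly the essentially normal operators satisfying conditions (1) and (2) of the present theorem.

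First, I would use Theorem \ref{example} to produce a concrete operator $S \in \m{NCFB}_n(\Omega)$ which is strongly irreducible, has $\sigma(S)=\overline{\Omega}$, $\rho_F(S)\cap\sigma(S)=\Omega$, and $\ind(w-S)=n$ for $w\in\Omega$. By the remark following the definition of $\m{A}_n(\Omega)$, such an $S$ belongs to $\m{A}_n(\Omega)$. Applying Proposition \ref{JWL} to this particular $S$ then yields that every operator $A$ which is essentially normal with $\sigma(A)=\overline{\Omega}$, $\rho_F(A)\cap\sigma(A)=\Omega$, and $\ind(w-A)=n$ lies in the norm closure of the $(\m{U+K})$-orbit of $S$. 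In particular, the operator $T$ of the theorem lies in this closure.

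Second, I would verify that the entire $(\m{U+K})$-orbit of $S$ lives inside $\m{NCFB}_n(\Omega)$. Let $V=U+K$ be invertible with $U$ unitary and $K$ compact. The essential normality of $VSV^{-1}$ is immediate because $\m{Q}(V)$ is a unitary in the Calkin algebra, so $\m{Q}(VSV^{-1})=\m{Q}(U)\m{Q}(S)\m{Q}(U)^*$ is normal. The $\m{CFB}_n$-structure transports along $V$: if $\m{H}=\m{H}_1\dotplus\cdots\dotplus\m{H}_n$ is the topological direct decomposition for $S$ with idempotents $P_j$, then $V\m{H}_1\dotplus\cdots\dotplus V\m{H}_n$ is a topological direct decomposition for $VSV^{-1}$ with idempotents $Q_j=VP_jV^{-1}$; the upper-triangular form, the factorization $T_{k,j}=C_{k,j}T_{k,k+1}\cdots T_{j-1,j}$ with $C_{k,j}\in \{T_{k,k}\}'$, and Property (H) are similarity invariants that are preserved. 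The only new condition that requires checking is that $\m{Q}(Q_j)^*=\m{Q}(Q_j)$. But
\[
\m{Q}(Q_j)^* = \bigl(\m{Q}(U)\m{Q}(P_j)\m{Q}(U)^*\bigr)^* = \m{Q}(U)\m{Q}(P_j)^*\m{Q}(U)^* = \m{Q}(U)\m{Q}(P_j)\m{Q}(U)^* = \m{Q}(Q_j),
\]
using that $\m{Q}(P_j)^*=\m{Q}(P_j)$ for $S$ and that $\m{Q}(V)^{-1}=\m{Q}(U)^*$. Thus $VSV^{-1}\in\m{NCFB}_n(\Omega)$, so the $(\m{U+K})$-orbit of $S$ is contained in $\m{NCFB}_n(\Omega)$, hence its norm-closure lies in $\overline{\m{NCFB}_n(\Omega)}$.

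Combining the two steps, $T$ lies in the closure of an orbit that is entirely contained in $\m{NCFB}_n(\Omega)$, hence $T\in\overline{\m{NCFB}_n(\Omega)}$. The main conceptual obstacle is Step two: one must ensure that every structural ingredient in Definition \ref{ncfb} (essential normality, Cowen--Douglas index, the $\m{CFB}_n$-data of Definition \ref{CFB1}, and the essentially selfadjoint idempotent condition) is preserved under conjugation by $U+K$. Essential normality and the Cowen--Douglas data are clearly similarity-invariant, and the $\m{CFB}_n$-structure is known from \cite{JJK19} to be preserved under similarity; the only genuinely new check is the Calkin-algebra selfadjointness of the idempotents, which is the short computation displayed above. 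Once this is in hand, the rest of the argument is a clean application of the previously stated Theorem \ref{example} and Proposition \ref{JWL}.
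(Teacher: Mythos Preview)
Your proposal is correct and follows the same approach as the paper, which simply states that the result is ``directly from Theorem \ref{example} and Proposition \ref{JWL}.'' Your explicit verification that $\m{NCFB}_n(\Omega)$ is stable under $(\m{U}+\m{K})$-conjugation (in particular the Calkin-algebra computation showing $\m{Q}(Q_j)^*=\m{Q}(Q_j)$) fills in a step the paper leaves implicit but which is indeed needed to pass from ``$T$ lies in the closure of the $(\m{U}+\m{K})$-orbit of $S$'' to ``$T\in\overline{\m{NCFB}_n(\Omega)}$.''
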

\begin{proof}
It is directly from Theorem \ref{example} and Proposition \ref{JWL}.
\end{proof}

\section{Similarity Invariants}

In this section, we will study the similarity invariants of the class $\m{NCFB}_n(\Omega)$ of essentially normal Cowen-Douglas operators.
We begin with some technique lemmas.

\begin{lem}\label{J21}
Let $T=(T_{j,k})_{j,k=1}^n$, $\widetilde{T}=(\widetilde{T}_{j,k})_{j,k=1}^n\in \m{NCFB}_n(\Omega).$ 
If $T_{j,j}=\widetilde{T}_{j,j},$ $T_{j,j+1}=\widetilde{T}_{j,j+1}$, then there exists $K\in \m{K}(\m{H})$ such that $X=I+K$ is invertible and $XT=\widetilde{T}X.$ 
\end{lem}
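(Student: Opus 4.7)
My plan is to build $X = I + K$ where $K$ is strictly block upper triangular with respect to the topological direct decomposition $\m{H}=\m{H}_1\dotplus\cdots\dotplus\m{H}_n$ that both $T$ and $\widetilde{T}$ are adapted to. Since $T,\widetilde{T}\in\m{NCFB}_n(\Omega)$, Theorem \ref{newl1} tells me that each off-diagonal entry $T_{i,k}$ and $\widetilde{T}_{i,k}$ with $i<k$ is compact. The $\m{CFB}_n$ structure together with $T_{j,j}=\widetilde{T}_{j,j}$ and $T_{j,j+1}=\widetilde{T}_{j,j+1}$ lets me write
\[
T_{k,j}=\phi_{k,j}(T_{k,k})\,T_{k,k+1}\cdots T_{j-1,j},\quad \widetilde{T}_{k,j}=\widetilde{\phi}_{k,j}(T_{k,k})\,T_{k,k+1}\cdots T_{j-1,j}
\]
for holomorphic $\phi_{k,j},\widetilde{\phi}_{k,j}$ (with $\phi_{k,k+1}=\widetilde{\phi}_{k,k+1}\equiv 1$). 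The natural ansatz is therefore
\[
K_{k,j}=\eta_{k,j}(T_{k,k})\,T_{k,k+1}\cdots T_{j-1,j},\qquad k<j,
\]
with unknown holomorphic $\eta_{k,j}$. Chaining the intertwining identities $T_{s,s}T_{s,s+1}=T_{s,s+1}T_{s+1,s+1}$ will automatically give $T_{k,k}K_{k,j}=K_{k,j}T_{j,j}$.

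Next I will expand $XT=\widetilde{T}X$ block by block. The $(k,j)$ equation becomes
\[
T_{k,k}K_{k,j}-K_{k,j}T_{j,j}+\sum_{s=k+1}^{j-1}\bigl(\widetilde{T}_{k,s}K_{s,j}-K_{k,s}T_{s,j}\bigr)=T_{k,j}-\widetilde{T}_{k,j},
\]
in which the first pair vanishes by the intertwining built into the ansatz. I will then repeatedly apply the functional-calculus identity $T_{r,r+1}f(T_{r+1,r+1})=f(T_{r,r})T_{r,r+1}$ to push holomorphic functions leftward through the chain; this collapses every product $\widetilde{T}_{k,s}K_{s,j}$ and $K_{k,s}T_{s,j}$ into $(\widetilde{\phi}_{k,s}\eta_{s,j})(T_{k,k})$ and $(\eta_{k,s}\phi_{s,j})(T_{k,k})$ respectively, multiplied by the common factor $T_{k,k+1}\cdots T_{j-1,j}$. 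The operator equation therefore reduces to the scalar holomorphic system
\[
\phi_{k,j}-\widetilde{\phi}_{k,j}=\sum_{s=k+1}^{j-1}\bigl(\widetilde{\phi}_{k,s}\,\eta_{s,j}-\eta_{k,s}\,\phi_{s,j}\bigr),\qquad 1\le k,\;k+2\le j\le n.
\]
I will exploit the $n-1$ free parameters by setting $\eta_{1,j}\equiv 0$ for $j=2,\ldots,n$. Recursing first on $j$ and, within each $j$, on $k=j-2,j-3,\ldots,1$, the equation $(k,j)$ determines $\eta_{k+1,j}$ from quantities fixed at earlier stages; this yields every $\eta_{k,j}$ as an explicit polynomial expression in the known $\phi_{\cdot,\cdot}$ and $\widetilde{\phi}_{\cdot,\cdot}$, hence holomorphic on any common domain of those functions.

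Finally I will check the three required conclusions. Each $K_{k,j}$ carries the compact factor $T_{k,k+1}$, so the whole $K$ is compact; being strictly block upper triangular of size $n$, it is nilpotent, hence $X=I+K$ is invertible with explicit inverse $\sum_{\ell=0}^{n-1}(-K)^{\ell}$. The intertwining $XT=\widetilde{T}X$ then holds by construction. I expect the main obstacle to be the algebraic collapse described above: one must use the full strength of the $\m{CFB}_n(\Omega)$ structure (in particular the remark that the coefficients $C_{k,j}$ are exactly $\phi_{k,j}(T_{k,k})$) to guarantee that every term of the block equation lives in the one-parameter family of operators spanned, over holomorphic functions of $T_{k,k}$, by $T_{k,k+1}\cdots T_{j-1,j}$. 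Once that reduction is in hand, the triangular solvability of the scalar system is routine.
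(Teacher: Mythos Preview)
Your proposal is correct and follows essentially the same approach as the paper: both construct a strictly block upper triangular $K$ with entries of the form (commutant element)$\cdot T_{k,k+1}\cdots T_{j-1,j}$ and solve recursively, using that the intertwining ansatz kills the $\tau_{T_{k,k},T_{j,j}}$ term in each block equation. The only cosmetic differences are that you reduce explicitly to a scalar holomorphic system and recurse column-by-column fixing $\eta_{1,\cdot}\equiv 0$, whereas the paper works superdiagonal-by-superdiagonal fixing $K_{n-m,n}$; the compactness (via Theorem~\ref{newl1}) and invertibility (via nilpotence of $K$) are handled exactly as you indicate.
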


\begin{proof} 
To find $K$, we need to solve the equation 
\begin{equation}\label{mle1}
(I+K)T=\widetilde{T}(I+K).
\end{equation} 
Set $X:=I+K$, where 
$$K=\left ( \begin{smallmatrix}0 & K_{1,2}& K_{1,3}& \cdots & K_{1,n}\\
0&0&K_{2,3}&\cdots&K_{2,n}\\
\vdots&\ddots&\ddots&\ddots&\vdots\\
0&\cdots&0&0&K_{n-1,n}\\
0&\cdots&\cdots&0&0
\end{smallmatrix}\right ).$$
From Equation (\ref{mle1}), we have
\begin{eqnarray}\label{4.1}
&&\left ( \begin{smallmatrix}1 & K_{1,2}& K_{1,3}& \cdots & K_{1,n}\\
0&1&K_{2,3}&\cdots&K_{2,n}\\
\vdots&\ddots&\ddots&\ddots&\vdots\\
0&\cdots&0&1&K_{n-1,n}\\
0&\cdots&\cdots&0&1
\end{smallmatrix}\right )
\left ( \begin{smallmatrix}T_{1,1} & T_{1,2}& {T}_{1,3}& \cdots & {T}_{1,n}\\
0&T_{2,2}&T_{2,3}&\cdots& {T}_{2,n}\\
\vdots&\ddots&\ddots&\ddots&\vdots\\
0&\cdots&0&T_{n-1,n-1}&T_{n-1,n}\\
0&\cdots&\cdots&0&T_{n,n}
\end{smallmatrix}\right ) \nonumber\\
&=&\left ( \begin{smallmatrix}T_{1,1} & T_{1,2}& \widetilde{T}_{1,3}& \cdots & \widetilde{T}_{1,n}\\
0&T_{2,2}&T_{2,3}&\cdots&\widetilde{T}_{2,n}\\
\vdots&\ddots&\ddots&\ddots&\vdots\\
0&\cdots&0&T_{n-1,n-1}&T_{n-1,n}\\
0&\cdots&\cdots&0&T_{n,n}
\end{smallmatrix}\right )\left ( \begin{smallmatrix}1 & K_{1,2}& K_{1,3}& \cdots & K_{1,n}\\
0&1&K_{2,3}&\cdots&K_{2,n}\\
\vdots&\ddots&\ddots&\ddots&\vdots\\
0&\cdots&0&1&K_{n-1,n}\\
0&\cdots&\cdots&0&1
\end{smallmatrix}\right ).
\end{eqnarray}
To find $K_{j,k}$ for $1\leq j<k\leq n$, we need the following steps.

{\bf Step I.}
For $1\leq j\leq n-1$, by equating the $(j,j+1)$ entry of Equation (\ref{4.1}), we have 
$$T_{j,j+1}+ K_{j,j+1}T_{j+1,j+1}= T_{j,j} K_{j,j+1}+ T_{j,j+1},$$ 
i.e. $ K_{j,j+1}T_{j+1,j+1}= T_{j,j} K_{j,j+1}$.

For $1\leq j \leq n-2$, by comparing $(j,j+2)$ entry of Equation (\ref{4.1}), we have
\begin{eqnarray}\label{l4e1}
T_{j,j+2}+K_{j,j+1}T_{j+1,j+2}+K_{j,j+2}T_{j+2,j+2}=T_{j,j}K_{j,j+2}+T_{j,j+1}K_{j+1,j+2}+\widetilde{T}_{j,j+2}.\end{eqnarray}
We assume that 
\begin{align}\label{assumption1}
K_{j,j+2}T_{j+2,j+2}=T_{j,j}K_{j,j+2}, \quad 1\leq j\leq n-2.
\end{align}
Then
\begin{align}\label{l4e31}
T_{j,j+2}+K_{j,j+1}T_{j+1,j+2}=T_{j,j+1}K_{j+1,j+2}+\widetilde{T}_{j,j+2}.
\end{align}
By the definition of $T$ and $\widetilde{T}$, we have
\begin{align}\label{l4e41}
C_{j,j+2}T_{j,j+1}T_{j+1, j+2}+K_{j,j+1}T_{j+1,j+2}=T_{j,j+1}K_{j+1,j+2}+\tilde{C}_{j,j+2}T_{j,j+1}T_{j+1, j+2}.
\end{align}
Let $K_{j,j+1}=C^{(1)}_{j,j+1}T_{j, j+1}$, where $C^{(1)}_{j,j+1}\in \{T_{j,j}\}'$.
Then we may assume that $K_{j,j+1}=\tilde{C}_{j,j+2}T_{j,j+1}-C_{j,j+2}T_{j,j+1}+ T_{j,j+1}C^{(1)}_{j+1,j+2}$.
We also assume that $K_{n-1,n}=T_{n-1,n}$, i.e. $C^{(1)}_{n-1,n}=1$ and we will solve for $K_{j,j+2}$ in the next step.

{\bf Step II.}
By comparing $(j,j+3)$-entry of Equation (\ref{4.1}), we get
 \begin{equation}\label{l4e3}
 \begin{aligned}
&T_{j,j+3}+ K_{j,j+1}T_{j+1,j+3}+K_{j,j+2}T_{j+2,j+3}+K_{j,j+3}T_{j+3,j+3}\\
=&T_{j,j}K_{j,j+3}+T_{j,j+1}K_{j+1,j+3}+\widetilde{T}_{j,j+2}K_{j+2,j+3}+\widetilde{T}_{j,j+3}.
\end{aligned}
 \end{equation}
We assume that 
\begin{align}\label{assumption2}
T_{j,j}K_{j,j+3}=K_{j,j+3}T_{j+3,j+3}, 1\leq j\leq n-3,
\end{align} 
then from Equation (\ref{l4e3}) we have  
 \begin{equation}\label{l4e4}
 \begin{aligned}
&T_{j,j+3}+ K_{j,j+1}T_{j+1,j+3}+K_{j,j+2}T_{i+2,i+3}\\
=&T_{i,i+1}K_{i+1,i+3}+\widetilde{T}_{i,i+2}K_{i+2,i+3}+\widetilde{T}_{j,j+3}.
\end{aligned}
 \end{equation}
Assume that $K_{j,j+2}=C^{(2)}_{j,j+2}T_{j,j+1}T_{j+1, j+2}$, where $C^{(2)}_{j,j+2}\in \{T_{j,j}\}'$.
By using the argument in {\bf Step I.}, we can solve for $K_{j,j+2}$ and assume that $K_{n-2,n}=T_{n-2, n-1}T_{n-1, n}$.

Iterating the arguments in {\bf Step II.}, we can solve for $K_{j,k}$, $1\leq j<k\leq n$ and it is clear that $K_{j,k}$ are compact.
\end{proof}

\begin{lem}\cite{JiaWan98}\label{weight1}
Let  $T\in \m{B}_1(\Omega)$. 
For any $a_0\in \Omega$, there exists an orthogonal normal basis $\{e_n\}_{n=0}^{\infty}$ of ${\mathcal H}$  and $r>0$ such that $T$ admits the upper-triangular matrix representation 
\begin{equation}\label{uper2}T=\begin{pmatrix}
a_0& a_{1,2} & a_{1,3}&\cdots&a_{1,k}&\cdots\\
&a_0&a_{2,3}&\cdots&a_{2,k} &\cdots\\
&&\ddots&\ddots&\vdots&\cdots\\
&&&a_0&a_{k-1,k}&\cdots\\
&&&&a_0&\ddots\\
&&&&&\ddots\\
\end{pmatrix}
\end{equation}
with respect to $\{e_n\}_{n=0}^{\infty}$ and $\left|a_{k-1,k}\right|>r, $ for all $k=2,3,\cdots.$ 
\end{lem}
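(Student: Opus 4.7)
The plan is to construct the orthonormal basis from the generalized-eigenspace filtration of $T-a_0$, automatically securing the upper-triangular form with $a_0$ on the diagonal, and then to extract the lower bound on the first superdiagonal from a bounded right inverse of $T-a_0$. Set $\m{H}_j:=\Ker(T-a_0)^j$. Since $T\in\m{B}_1(\Omega)$, the operator $T-a_0$ is surjective and Fredholm of index one, so $\dim\m{H}_j=j$ and the chain $\{0\}=\m{H}_0\subsetneq\m{H}_1\subsetneq\m{H}_2\subsetneq\cdots$ is strictly increasing. I would then confirm $\overline{\bigcup_j\m{H}_j}=\m{H}$: a holomorphic frame $\gamma$ of $E_T$ has Taylor coefficients $\gamma^{(j)}(a_0)/j!\in\m{H}_{j+1}$, so $\gamma(w)\in\overline{\bigcup_j\m{H}_j}$ for $w$ in a small disk about $a_0$, and the identity theorem together with $\bigvee_{w\in\Omega}\Ker(T-w)=\m{H}$ shows that this closed subspace equals $\m{H}$. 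Picking a unit vector $e_j\in\m{H}_{j+1}\ominus\m{H}_j$ produces the desired orthonormal basis $\{e_j\}_{j\geq 0}$, and since $(T-a_0)\m{H}_{j+1}\subset\m{H}_j=\mathrm{span}\{e_0,\ldots,e_{j-1}\}$, the matrix of $T$ in this basis is upper triangular with $a_0$ on the diagonal.

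For the lower bound, take a bounded right inverse $R:\m{H}\to\m{H}$ of $T-a_0$, which exists by the open mapping theorem applied to the bijection $(T-a_0)|_{\Ker(T-a_0)^{\perp}}\to\m{H}$. The key stability property is $R(\m{H}_j)\subset\m{H}_{j+1}$, because $y\in\m{H}_j$ implies $(T-a_0)^{j+1}Ry=(T-a_0)^j y=0$. Set $v_j:=R^j e_0$; iterated use of $(T-a_0)R=I$ gives $(T-a_0)v_j=v_{j-1}$, and $(T-a_0)^j v_j=e_0\neq 0$ shows $v_j\in\m{H}_{j+1}\setminus\m{H}_j$. Writing $v_j=P_{\m{H}_j}v_j+c_j e_j$, the coefficient $c_j$ is therefore nonzero.

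The argument then closes by a double bookkeeping of $c_j$. Comparing $e_{j-1}$-components in
\[
v_{j-1}=(T-a_0)v_j=(T-a_0)P_{\m{H}_j}v_j+c_j(T-a_0)e_j
\]
and using $(T-a_0)P_{\m{H}_j}v_j\in\m{H}_{j-1}$ (no $e_{j-1}$-component) yields $c_{j-1}=c_j\langle(T-a_0)e_j,e_{j-1}\rangle$, so the matrix entry $a_{j,j+1}=\langle Te_j,e_{j-1}\rangle$ equals $c_{j-1}/c_j$. On the other hand, the decomposition $v_j=Rv_{j-1}=RP_{\m{H}_{j-1}}v_{j-1}+c_{j-1}Re_{j-1}$, combined with $RP_{\m{H}_{j-1}}v_{j-1}\in R(\m{H}_{j-1})\subset\m{H}_j$ (no $e_j$-component), gives $c_j=c_{j-1}\langle Re_{j-1},e_j\rangle$, whence $|c_j/c_{j-1}|\leq\|R\|$. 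The two identities combine to $|a_{j,j+1}|\geq 1/\|R\|$ for every $j\geq 1$, that is, $|a_{k-1,k}|\geq 1/\|R\|$ for every $k\geq 2$, and any $0<r<1/\|R\|$ meets the requirement. The main obstacle I anticipate is keeping these two coefficient identities and the subspace inclusions for $R$ and $T-a_0$ in step: once $R(\m{H}_j)\subset\m{H}_{j+1}$ and $(T-a_0)\m{H}_{j+1}\subset\m{H}_j$ are in hand, the two estimates interlock and the lower bound drops out.
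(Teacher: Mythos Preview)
The paper does not supply a proof of this lemma; it is quoted from \cite{JiaWan98} and used as a black box. Your argument is correct and is essentially the standard one: build the basis from the flag $\m{H}_j=\Ker(T-a_0)^j$ (using surjectivity of $T-a_0$ to get $\dim\m{H}_j=j$ and the Taylor expansion of a holomorphic section plus analytic continuation to get density), which yields the upper-triangular form with constant diagonal; then extract the superdiagonal bound from a bounded right inverse $R$ of $T-a_0$. The two identities you isolate, $c_{j-1}=c_j\,a_{j,j+1}$ and $c_j=c_{j-1}\langle Re_{j-1},e_j\rangle$, are exactly what is needed, and the inclusions $(T-a_0)\m{H}_{j+1}\subset\m{H}_j$ and $R(\m{H}_j)\subset\m{H}_{j+1}$ make them valid. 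One cosmetic remark: you actually get $|a_{k-1,k}|\geq 1/\|R\|$, so $r=1/\|R\|$ already works (no need for a strictly smaller $r$), and your indexing convention $a_{j,j+1}=\langle Te_j,e_{j-1}\rangle$ matches the paper's matrix once one identifies column $k$ with basis vector $e_{k-1}$.
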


\begin{lem} \label{unhappy}
Let $T_1$ and $T_2\in \m{B}_1(\Omega)$.  
Suppose $T_1$ and $T_2$ admit the following upper-triangular matrix representation mentioned in Lemma \ref{weight1}, 
\begin{equation}\label{uper3}T_1=\begin{pmatrix}
0& a_{1,2} & a_{1,3}&\cdots&a_{1,k}&\cdots\\
&0&a_{2,3}&\cdots&a_{2,k} &\cdots\\
&&\ddots&\ddots&\vdots&\cdots\\
&&&0&a_{k-1,k}&\cdots\\
&&&&0&\ddots\\
&&&&&\ddots\\
\end{pmatrix}, T_2=\begin{pmatrix}
0& b_{1,2} & b_{1,3}&\cdots&b_{1,k}&\cdots\\
&0&b_{2,3}&\cdots&b_{2,k} &\cdots\\
&&\ddots&\ddots&\vdots&\cdots\\
&&&0&b_{k-1,k}&\cdots\\
&&&&0&\ddots\\
&&&&&\ddots\\
\end{pmatrix}. 
\end{equation}
Suppose $X,Y\in\m{L}(\m{H})$ and  admit the matrix representation $X=(x_{k,j})_{k,j=0}^\infty, Y=(y_{k,j})_{k,j=0}^\infty, $ $y_{k,j}=0, k>j.$  
If $T_1Y=YT_2$ and $Y=T_1X-XT_2$ both hold, then 
\begin{equation}
x_{k+1,k}=\frac{n \prod\limits^{k}_{\ell=1}b_{\ell,\ell+1}}{\prod\limits^{k+1}_{\ell=1}a_{\ell,\ell+1}}.
\end{equation}
\end{lem}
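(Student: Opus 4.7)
The plan is to exploit the upper-triangular structure twice: first to pin down the diagonal of $Y$ from the intertwining $T_1 Y = Y T_2$, and then to read off the subdiagonal entries of $X$ from $Y = T_1 X - X T_2$.

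\emph{Stage 1: diagonal of $Y$.} I would equate the $(k, k{+}1)$-entries of $T_1 Y = Y T_2$. Because $T_1$ and $T_2$ are strictly upper triangular and $Y$ is upper triangular, every contribution is killed except $a_{k, k+1}\, y_{k+1, k+1}$ on the left and $y_{k, k}\, b_{k, k+1}$ on the right. This forces the recursion
$$y_{k+1, k+1} \;=\; y_{k, k}\,\frac{b_{k, k+1}}{a_{k, k+1}},$$
so iterating yields that $y_{k, k}$ is a scalar multiple of $\prod_{\ell=1}^{k-1} b_{\ell, \ell+1}/a_{\ell, \ell+1}$, with the scalar being the free constant $n := y_{1, 1}$ appearing in the statement.

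\emph{Stage 2: subdiagonal of $X$.} I would then unravel $Y = T_1 X - X T_2$ column by column, inducting on $k$. For the $k$th column, the $(i, k)$-entry of $Y$ is $0$ whenever $i > k$ and equals $y_{k, k}$ when $i = k$. Concretely, these read
$$0 \;=\; \sum_{j > i} a_{i, j}\, x_{j, k} \;-\; \sum_{j < k} x_{i, j}\, b_{j, k} \qquad (i > k),$$
together with
$$y_{k, k} \;=\; \sum_{j > k} a_{k, j}\, x_{j, k} \;-\; \sum_{j < k} x_{k, j}\, b_{j, k}.$$
The vanishing equations (for $i > k$) are a triangular cascade that determines the deep entries $x_{j, k}$, $j > k{+}1$, in terms of the previously solved subdiagonals $x_{i, j}$ with $i - j < k - (k{-}1)$, and feeding these back into the $(k, k)$-equation isolates $x_{k+1, k}$ in terms of $y_{k, k}$, the known $x_{k, k-1}$, and the superdiagonal weights. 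Combining with the Stage 1 formula for $y_{k, k}$ should produce the claimed expression.

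\emph{Main obstacle.} The delicate point is that $T_1, T_2$ are only upper triangular, not purely superdiagonal, so the entries $a_{i, j}, b_{i, j}$ with $j - i \geq 2$ inject cross-terms into the column equations. The heart of the argument is to show that, after running the triangular cascade, these cross-terms telescope or cancel, leaving a final expression that depends only on the superdiagonal weights $a_{\ell, \ell+1}$ and $b_{\ell, \ell+1}$. Here the uniform lower bound $|a_{k-1, k}| > r$ from Lemma \ref{weight1} guarantees that the cascade is actually invertible at every step, so no dividing by vanishing weights occurs during the iteration.
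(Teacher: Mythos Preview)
Your Stage~1 matches the paper exactly. For Stage~2, however, the paper takes a much shorter route: it first asserts that $x_{k,j}=0$ whenever $k-j>1$, after which the $(k,k)$-entry of $Y=T_1X-XT_2$ collapses to the single-term recursion
\[
a_{k,k+1}\,x_{k+1,k}-x_{k,k-1}\,b_{k-1,k}=y_{k,k},
\]
and the closed formula follows by iteration. No cross-terms from the higher $a_{i,j}$ or $b_{i,j}$ ever appear, because the deep subdiagonal of $X$ is identically zero.

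Your ``main obstacle'' is therefore misdiagnosed: the cross-terms do not telescope or cancel---they are absent. The paper does not justify the vanishing $x_{k,j}=0$ for $k-j>1$, but that is the real content of the lemma, and it comes from $T_1\in\m{B}_1(\Omega)$ (not merely from the lower bound $|a_{k-1,k}|>r$). Argue by induction on the column index $j$: if the first $j-1$ columns of $X$ already lie in $\operatorname{span}\{e_1,\ldots,e_j\}$, then the $j$-th column of $XT_2$ (a linear combination of those earlier columns) also lies there, and since the $j$-th column of $Y$ lies in $\operatorname{span}\{e_1,\ldots,e_j\}$ by upper-triangularity, the $j$-th column $v_j$ of $X$ satisfies $T_1v_j\in\operatorname{span}\{e_1,\ldots,e_j\}$. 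Because $\dim\Ker T_1=1$, the preimage $T_1^{-1}(\operatorname{span}\{e_1,\ldots,e_j\})$ is exactly $(j{+}1)$-dimensional and hence equals $\operatorname{span}\{e_1,\ldots,e_{j+1}\}$; thus $v_j$ has no component beyond $e_{j+1}$. Without this step your ``triangular cascade'' is an infinite homogeneous system with no bottom row to start from, and there is no mechanism forcing the deep entries $x_{j,k}$ $(j>k+1)$ to take any particular value---let alone zero---so your proposed route cannot close.
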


\begin{proof} 
Since $T_1Y=YT_2$ and $Y=(y_{k,j}), $ $y_{k,j}=0, k>j.$ 
Then we have that $y_{k,k}=\frac{\prod\limits^k_{\ell=1}b_{\ell,\ell+1}}{\prod\limits^k_{\ell=1}a_{\ell,\ell+1}}y_{1,1}$. 
Since $T_1X-XT_2=Y$, then we have that $x_{k,j}=0, k-j>1,$ and 
$$a_{1,2}x_{2,1}=y_{1,1},\quad  a_{2,3}x_{3,2}-x_{2,1}b_{1,2}=y_{2,2},\quad  \cdots, \quad a_{k,k+1}x_{k+1,k}-x_{k,k-1}b_{k-1,k}=y_{k,k}, \cdots.$$
It follows that $x_{k+1,k}=\frac{n \prod\limits^{k}_{l=1}b_{l,l+1}}{\prod\limits^{k+1}_{l=1}a_{l,l+1}}.$ 
\end{proof}

\begin{lem}\label{order}
Let $T\in\mathcal{NCFB}_n(\Omega)$. 
Then we have that $\Ker\tau_{T_{j,j}, T_{\ell,\ell}}=\{0\}, j>\ell.$
\end{lem}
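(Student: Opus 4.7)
The plan is to induct on $m := j - \ell \geq 1$, assuming implicitly the strongly irreducible case in which $T_{k,k+1}\neq 0$ for every $k$ (Lemma \ref{SI}); degenerate cases reduce to this by splitting $T$ along the vanishing off-diagonals. Given $X\in \Ker\tau_{T_{j,j},T_{\ell,\ell}}$, the key construction is
\[
Y := XT_{\ell,\ell+1} \colon \m{H}_{\ell+1}\to \m{H}_j.
\]
Using $T_{j,j}X = XT_{\ell,\ell}$ together with the $\m{CFB}_n(\Omega)$-intertwining $T_{\ell,\ell}T_{\ell,\ell+1} = T_{\ell,\ell+1}T_{\ell+1,\ell+1}$, a direct computation gives
\[
T_{j,j}Y = XT_{\ell,\ell}T_{\ell,\ell+1} = XT_{\ell,\ell+1}T_{\ell+1,\ell+1} = YT_{\ell+1,\ell+1},
\]
so $Y\in\Ker\tau_{T_{j,j},T_{\ell+1,\ell+1}}$.

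In the base case $m=1$, $Y$ lies in the commutant $\{T_{\ell+1,\ell+1}\}'$ and is compact, because $T_{\ell,\ell+1}$ is compact by Theorem \ref{newl1}. Since $T_{\ell+1,\ell+1}\in\m{B}_1(\Omega)$, any element of its commutant acts on each rank-one fiber $\Ker(T_{\ell+1,\ell+1}-w)$ as multiplication by a holomorphic scalar $\phi(w)$ on $\Omega$, and the inclusion $\phi(\Omega)\subseteq \sigma(Y)$ combined with the countability of the spectrum of a compact operator forces $\phi$ to be constant via the open mapping theorem on the connected set $\Omega$. Since $Y = cI$ compact on an infinite-dimensional space requires $c = 0$, we obtain $Y = 0$. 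In the inductive step $m > 1$, the induction hypothesis applied to $(T_{j,j},T_{\ell+1,\ell+1})$, whose index difference is $m-1$, immediately gives $Y = 0$.

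In both cases $XT_{\ell,\ell+1} = 0$. To finish, I would invoke the fact that a nonzero intertwining bundle map between $\m{B}_1(\Omega)$-operators has dense range: choosing holomorphic frames $\gamma_{\ell+1}, \gamma_\ell$ of the eigenbundles, the intertwining forces $T_{\ell,\ell+1}\gamma_{\ell+1}(w) = \phi(w)\gamma_\ell(w)$ for some holomorphic $\phi\not\equiv 0$, and outside the discrete zero set of $\phi$ every $\gamma_\ell(w)$ lies in $\Ran T_{\ell,\ell+1}$; the Cowen-Douglas spanning property on any open subset of $\Omega$ then yields $\overline{\Ran T_{\ell,\ell+1}} = \m{H}_\ell$. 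Consequently $X$ vanishes on a dense subspace of $\m{H}_\ell$, and $X = 0$ by continuity.

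The main technical obstacle is the compact-commutant step in the base case: extracting $Y = 0$ from the holomorphic scalar $\phi$ relies on the Cowen-Douglas identification of the commutant together with the open mapping theorem on connected $\Omega$. The companion dense-range statement is similar in flavor but more elementary. Once these two geometric facts are in place, the induction glues them into the full result without further difficulty.
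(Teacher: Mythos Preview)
Your proof is correct (under the same implicit strong-irreducibility hypothesis $T_{k,k+1}\neq 0$ that the paper also relies on), but the route is genuinely different from the paper's. The paper fixes the upper-triangular matrix models of $T_{\ell,\ell}$ and $T_{j,j}$ furnished by Lemma~\ref{weight1}, reads off from the compactness of the intertwiner $T_{\ell,j}$ (Theorem~\ref{newl1}) that the product ratio $\prod a^{j}_{l,l+1}/\prod a^{\ell}_{l,l+1}\to 0$, and then shows directly that any $Y$ with $T_{j,j}Y=YT_{\ell,\ell}$ would have diagonals (and then each successive super-diagonal) governed by the reciprocal ratio, forcing them to vanish one layer at a time. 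Your argument bypasses all matrix bookkeeping: you push $X$ along $T_{\ell,\ell+1}$ to land in $\Ker\tau_{T_{j,j},T_{\ell+1,\ell+1}}$, handle the base case via the Cowen--Douglas description of $\{T_{\ell+1,\ell+1}\}'$ combined with compactness and the open mapping theorem, and close with the standard dense-range property of nonzero $\m{B}_1$-intertwiners. What this buys you is a cleaner, coordinate-free proof that does not invoke Lemma~\ref{weight1} at all and that isolates exactly the two geometric facts doing the work; what the paper's approach buys is a completely explicit picture of \emph{why} the obstruction appears, namely the incompatible growth rates along the diagonals, which is informative in its own right. Both arguments ultimately rest on Theorem~\ref{newl1} and on $T_{\ell,\ell+1}\neq 0$, so neither covers the degenerate case more than the other; your remark about splitting along vanishing off-diagonals should be read as reducing the \emph{use} of the lemma to the irreducible blocks, not as a proof of the lemma across blocks.
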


\begin{proof} 
By Lemma \ref{weight1}, without loss of generality, we assume that $0\in\Omega$ and there exists $\{e_k\}^{\infty}_{k=1}$ be an orthogonal normal basis of $\mathcal{H}$ such that $T_{\ell,\ell}$ and $T_{j,j}$ have the following the matrix representations according to $\{e_k\}^{\infty}_{k=1}.$
\begin{equation}\label{uper4}
T_{\ell,\ell}=\left(\begin{smallmatrix}
0& a^\ell_{1,2} & a^\ell_{1,3}&\cdots&a^\ell_{1,k}&\cdots\\
&0&a^\ell_{2,3}&\cdots&a^\ell_{2,k} &\cdots\\
&&\ddots&\ddots&\vdots&\cdots\\
&&&0&a^\ell_{k-1,k}&\cdots\\
&&&&0&\ddots\\
&&&&&\ddots\\
\end{smallmatrix}\right ), T_{j,j}=\left(\begin{smallmatrix}
0& a^{j}_{1,2} & a^{j}_{1,3}&\cdots&a^{j}_{1,k}&\cdots\\
&0&a^{j}_{2,3}&\cdots&a^{j}_{2,k} &\cdots\\
&&\ddots&\ddots&\vdots&\cdots\\
&&&0&a^{j}_{k-1,k}&\cdots\\
&&&&0&\ddots\\
&&&&&\ddots\\
\end{smallmatrix}\right ).
\end{equation}
Furthermore, $|a^\ell_{k,k+1}| >r$ and $|a^j_{k,k+1}|>r $ for some $r>0$ and any $k>2$. 

Since $T_{\ell,\ell}T_{\ell,j}=T_{\ell,j}T_{j,j}, 1\leq \ell\leq j-1$, by comparing the elements in the matrix on both sides of the equation, we have that 
$$T_{i,j}=\left ( \begin{matrix}
t_{1,1} & t_{1,2} &\cdots&t_{1,k}&\cdots\\
&t_{2,2}&\cdots&t_{2,k} &\cdots\\
&&\ddots&\vdots&\cdots\\
&&&t_{k,k}&\ddots\\
&&&&\ddots\\
\end{matrix} \right ),$$
where $t_{k,k}=\frac{\prod\limits^{k}_{l=1}a^{j}_{l,l+1}}{\prod\limits^k_{l=1}a^\ell_{l,l+1}}t_{1,1}$. 
Without loss of generality, assume that $t_{1,1}\neq 0$. 
By Theorem \ref{newl1},  each operator $T_{i,j}$ is a compact operator.  
Thus, we can assume that 
$$\lim\limits_{k\rightarrow \infty} t_{k,k}=\lim\limits_{k\rightarrow \infty}\frac{\prod\limits^{k}_{l=1}a^{j}_{l,l+1}}{\prod\limits^k_{l=1}a^\ell_{l,l+1}}t_{1,1}=0.$$
Now assume there exists a bounded operator $Y$ such that $T_{j,j}Y=YT_{\ell,\ell}$, then by a similar computation, we have that 
$$Y=\left ( \begin{matrix}
y_{1,1} & y_{1,2} &\cdots&y_{1,k}&\cdots\\
&y_{2,2}&\cdots&y_{2,k} &\cdots\\
&&\ddots&\vdots&\cdots\\
&&&y_{k,k}&\ddots\\
&&&&\ddots\\
\end{matrix} \right ),$$ 
where $y_{k,k}=\frac{\prod\limits^{k}_{l=1}a^{\ell}_{l,l+1}}{\prod\limits^k_{l=1}a^{j}_{l,l+1}}y_{1,1}.$ Thus, we have that $\lim\limits_{k\rightarrow \infty}y_{k,k}=\infty.$ 
This means $y_{1,1}=0$ and also $y_{k,k}=0, 1\leq k$. 
Then we have that 
\begin{align*}
&\left(\begin{smallmatrix}
0& a^{j}_{1,2} & a^{j}_{1,3}&\cdots&a^{j}_{1,k}&\cdots\\
&0&a^{j}_{2,3}&\cdots&a^{j}_{2,k} &\cdots\\
&&\ddots&\ddots&\vdots&\cdots\\
&&&0&a^{j}_{k-1,k}&\cdots\\
&&&&0&\ddots\\
&&&&&\ddots\\
\end{smallmatrix}\right )\left ( \begin{matrix}
0 & y_{1,2} &\cdots&y_{1,k}&\cdots\\
&0&\cdots&y_{2,k} &\cdots\\
&&\ddots&\vdots&\cdots\\
&&&0&\ddots\\
&&&&\ddots\\
\end{matrix} \right )\\
=&\left ( \begin{matrix}
0 & y_{1,2}&\cdots&y_{1,k}&\cdots\\
&0&\cdots&y_{2,k} &\cdots\\
&&\ddots&\vdots&\cdots\\
&&&0&\ddots\\
&&&&\ddots\\
\end{matrix} \right )\left(\begin{smallmatrix}
0& a^\ell_{1,2} & a^\ell_{1,3}&\cdots&a^\ell_{1,k}&\cdots\\
&0&a^\ell_{2,3}&\cdots&a^\ell_{2,k} &\cdots\\
&&\ddots&\ddots&\vdots&\cdots\\
&&&0&a^\ell_{k-1,k}&\cdots\\
&&&&0&\ddots\\
&&&&&\ddots\\
\end{smallmatrix}\right ).
\end{align*}
By comparing the coefficients of matrices in both sides, we have that 
$$y_{2,3}=y_{1,2}\frac{a^\ell_{2,3}}{a^j_{1,2}},\quad  y_{3,4}=y_{2,3}\frac{a^\ell_{3,4}}{a^j_{2,3}}\quad , \cdots, \quad  y_{k,k+1}=y_{k-1,k}\frac{a^\ell_{k,k+1}}{a^j_{k-1,k}}.$$
Thus, $y_{k,k+1}=\displaystyle \frac{\prod\limits_{l=1}^{k-1}a^\ell_{l,l+1}}{\prod\limits_{l=1}^{k-1}a^j_{l,l+1}}\left(\frac{a^\ell_{k,k+1}}{a^\ell_{1,2}}\right)y_{1,2}$.  
Since $\lim\limits_{k\rightarrow \infty}\frac{\prod\limits_{l=1}^{k-1}a^\ell_{l,l+1}}{\prod\limits_{l=1}^{k-1}a^j_{l,l+1}}=0$ and $|a^\ell_{k,k+1}|>r>0$. 
It follows that $y_{1,2}=0$. 
Thus, we have that $y_{k,k+1}=0,$ for any $k>0$. 
 
 By an inductive proof, we can assume that $y_{k,k+k^{\prime}}=0, k^{\prime}<s, k=1,2\cdots. $ 
 By a similar argument, we have that 
 $$y_{k,k+s}=\frac{\prod\limits_{l=1}^{k-1}a^\ell_{l,l+1}}{\prod\limits_{l=1}^{k-1}a^j_{l,l+1}}\left(\frac{\prod\limits_{m=1}^sa^i_{k,k+m}}{\prod\limits_{m=1}^s a^\ell_{m,m+1}}\right)y_{1,1+s}.$$
 Then we also have that $y_{1,1+s}=0$ and $y_{k,k+s}=0, k\geq 1.$ 
 That means $Y=0$. 
Thus, $\Ker\tau_{T_{j,j}, T_{\ell,\ell}}=\{0\}.$
\end{proof}

\begin{prop}\label{pjjkm}
Let  $T,\widetilde{T}\in\mathcal{NCFB}_n(\Omega)$.
Suppose that $X\in\m{L}(\m{H})$ is invertible such that $XT=\widetilde{T}X$.
Then $X$ and $X^{-1}$ are upper triangular.
 \end{prop}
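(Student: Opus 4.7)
The plan is to prove $X_{j,i}=0$ for every $j>i$ by downward induction on the depth $d=j-i$ below the block diagonal, with the upper-triangularity of $X^{-1}$ following immediately: since $XT=\widetilde{T}X$ rewrites as $X^{-1}\widetilde{T}=TX^{-1}$, an intertwining of the same form with $T$ and $\widetilde{T}$ swapped, and both belong to $\mathcal{NCFB}_n(\Omega)$, the same argument applied to $X^{-1}$ yields its upper-triangularity.

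Writing $X=(X_{j,i})_{j,i=1}^{n}$ as a block matrix with respect to the orthogonal decompositions $\mathcal{H}=\bigoplus_{j}\mathcal{H}_{j}=\bigoplus_{j}\widetilde{\mathcal{H}}_{j}$ (promoted from the topological direct decompositions via Proposition \ref{nc}) and comparing the $(j,i)$ block of $XT=\widetilde{T}X$ using upper-triangularity of $T$ and $\widetilde{T}$, I would obtain
$$\widetilde{T}_{j,j}X_{j,i}-X_{j,i}T_{i,i}=\sum_{m<i}X_{j,m}T_{m,i}-\sum_{m>j}\widetilde{T}_{j,m}X_{m,i}.$$
The induction proceeds downward on $d=j-i$. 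The base case $(j,i)=(n,1)$ has vacuous right-hand side, and under the hypothesis that every $X_{j',i'}$ with $j'>i'$ and $j'-i'>d$ vanishes, each summand on the right-hand side involves an entry of strictly larger depth (since $m<i$ forces $j-m>d$ and $m>j$ forces $m-i>d$), so it vanishes. Therefore $X_{j,i}\in\Ker\tau_{\widetilde{T}_{j,j},T_{i,i}}$.

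The central technical step is the mixed vanishing
$$\Ker\tau_{\widetilde{T}_{j,j},T_{i,i}}=\{0\}\qquad(j>i),$$
the analogue of Lemma \ref{order} where the two diagonal blocks come from different operators in $\mathcal{NCFB}_{n}(\Omega)$. My plan is to adapt the proof of Lemma \ref{order}. Fix $a_{0}\in\Omega$ and invoke Lemma \ref{weight1} to write $T_{i,i}$ and $\widetilde{T}_{j,j}$ as strictly upper-triangular shift-like matrices with weight sequences $\{a_{k,k+1}^{i}\}$ and $\{\widetilde{a}_{k,k+1}^{j}\}$ uniformly bounded below. The compactness of the off-diagonal blocks $T_{i,j}$ and $\widetilde{T}_{i,j}$ (Theorem \ref{newl1}), combined with their internal intertwining relations, forces
$$\lim_{k\to\infty}\prod_{l=1}^{k}\frac{a_{l,l+1}^{j}}{a_{l,l+1}^{i}}=0=\lim_{k\to\infty}\prod_{l=1}^{k}\frac{\widetilde{a}_{l,l+1}^{j}}{\widetilde{a}_{l,l+1}^{i}}.$$
To bridge across the two operators, I would use that each pair $T_{k,k}$, $\widetilde{T}_{k,k}$ lies in $\mathcal{B}_{1}(\Omega)$, is essentially normal, and carries the same essential spectrum and Fredholm index: by the Brown-Douglas-Fillmore theorem they are essentially unitarily equivalent, which essentially identifies the asymptotic behavior of their weight sequences and yields $\lim_{k}\prod_{l=1}^{k}\widetilde{a}_{l,l+1}^{j}/a_{l,l+1}^{i}=0$. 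Finally, for any bounded $Y$ with $\widetilde{T}_{j,j}Y=YT_{i,i}$, the matrix computation underlying Lemma \ref{unhappy} produces the diagonal formula $y_{k,k}=y_{1,1}\prod_{l=1}^{k-1}a_{l,l+1}^{i}/\widetilde{a}_{l,l+1}^{j}$, whose modulus tends to infinity unless $y_{1,1}=0$; the recursive superdiagonal analysis from Lemma \ref{order} then forces $Y=0$.

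The hardest part will be the bridging estimate. Essential unitary equivalence identifies operators only modulo compact perturbations and a unitary change of basis, whereas the weight sequences from Lemma \ref{weight1} depend on the chosen basis, so transferring the known asymptotic decay from $T$-weights to $\widetilde{T}$-weights requires careful bookkeeping. The compactness of off-diagonal blocks forced by essential normality (Theorem \ref{newl1}) is precisely the feature that enables this bridging and distinguishes $\mathcal{NCFB}_{n}(\Omega)$ from $\mathcal{CFB}_{n}(\Omega)$, for which the analogous statement need not hold.
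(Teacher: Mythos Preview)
Your inductive reduction is correct up to the point where you arrive at $X_{j,i}\in\Ker\tau_{\widetilde{T}_{j,j},T_{i,i}}$, but the plan for the mixed vanishing $\Ker\tau_{\widetilde{T}_{j,j},T_{i,i}}=\{0\}$ has a genuine gap. Your bridging argument presupposes that $T_{k,k}$ and $\widetilde{T}_{k,k}$ share the same essential spectrum and index so that Brown--Douglas--Fillmore applies, but nothing in the hypotheses links the $k$-th diagonal block of $T$ to the $k$-th diagonal block of $\widetilde{T}$ before you know $X$ is upper triangular. All the similarity $XT=\widetilde{T}X$ gives is $\sigma_e(T)=\sigma_e(\widetilde{T})$, and after Theorem~\ref{newl1} this only says $\bigcup_k\sigma_e(T_{k,k})=\bigcup_k\sigma_e(\widetilde{T}_{k,k})$, with no matching for individual $k$. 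Even granting essential unitary equivalence, the second part of your bridge---transferring asymptotics of weight products through a compact perturbation and a unitary change of basis---is not an argument but a hope; the weights of Lemma~\ref{weight1} are highly basis-dependent and not stable under compact perturbation.

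The paper avoids this difficulty altogether by treating $X$ and $Y=X^{-1}$ \emph{simultaneously}. From $X_{n,1}T_{1,1}=\widetilde{T}_{n,n}X_{n,1}$ and $T_{n,n}Y_{n,1}=Y_{n,1}\widetilde{T}_{1,1}$ one forms the product $X_{n,1}T_{1,n}Y_{n,1}$, which lies in $\Ker\tau_{\widetilde{T}_{n,n},\widetilde{T}_{1,1}}=\{0\}$ by Lemma~\ref{order} applied to $\widetilde{T}$ alone. Since a nonzero intertwiner between $\mathcal{B}_1(\Omega)$ operators has dense range, this forces $X_{n,1}=0$ or $Y_{n,1}=0$. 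A short case analysis, using only the transitivity of the order $\prec$ together with the internal orderings of $T$ and $\widetilde{T}$ from Lemma~\ref{order}, rules out the asymmetric cases and yields $X_{n,1}=Y_{n,1}=0$; one then iterates across the bottom row and up the matrix. The point is that the paper never needs the mixed kernel $\Ker\tau_{\widetilde{T}_{j,j},T_{i,i}}$ to vanish unconditionally: it manufactures whatever mixed vanishing is needed on the fly from the assumption that a specific intertwiner (e.g.\ $Y_{n,1}$) is nonzero, via transitivity of $\prec$.
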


\begin{proof}
Let $X$ be the invertible operator which intertwines $T$ and $\tilde{T}$. Set $Y=X^{-1}$ and $X=(X_{i,j})_{i,j=1}^n, Y=(Y_{i,j})_{i,j=1}^n$. 
Suppose that 
\begin{equation}\label{2.1}
\left ( \begin{smallmatrix}
X_{1,1} & X_{1,2} &\cdots&X_{1,n}\\
X_{2,1}&X_{2,2}&\cdots&X_{2,n} \\
\vdots&\vdots&\ddots&\vdots\\
X_{n,1}&X_{n,2}&\cdots&X_{n,n}\\
\end{smallmatrix} \right ) \left ( \begin{smallmatrix}
T_{1,1} & T_{1,2} &\cdots&T_{1,n}\\
&T_{2,2}&\cdots&T_{2,n} \\
&&\ddots&\vdots\\
&&&T_{n,n}\\
\end{smallmatrix} \right )=\left ( \begin{smallmatrix}
\tilde{T}_{1,1} & \tilde{T}_{1,2} &\cdots&\tilde{T}_{1,n}\\
&\tilde{T}_{2,2}&\cdots&\tilde{T}_{2,n} \\
&&\ddots&\vdots\\
&&&\tilde{T}_{n,n}\\
\end{smallmatrix} \right )\left ( \begin{smallmatrix}
X_{1,1} & X_{1,2} &\cdots&X_{1,n}\\
X_{2,1}&X_{2,2}&\cdots&X_{2,n} \\
\vdots&\vdots&\ddots&\vdots\\
X_{n,1}&X_{n,2}&\cdots&X_{n,n}\\
\end{smallmatrix} \right ), \end{equation}
\begin{equation}\label{2.2}
 \left ( \begin{smallmatrix}
T_{1,1} & T_{1,2} &\cdots&T_{1,n}\\
&T_{2,2}&\cdots&T_{2,n} \\
&&\ddots&\vdots\\
&&&T_{n,n}\\
\end{smallmatrix} \right )\left ( \begin{smallmatrix}
Y_{1,1} & Y_{1,2} &\cdots&Y_{1,n}\\
Y_{2,1}&Y_{2,2}&\cdots&Y_{2,n} \\
\vdots&\vdots&\ddots&\vdots\\
Y_{n,1}&Y_{n,2}&\cdots&Y_{n,n}\\
\end{smallmatrix} \right )=\left ( \begin{smallmatrix}
Y_{1,1} & Y_{1,2} &\cdots&Y_{1,n}\\
Y_{2,1}&Y_{2,2}&\cdots&Y_{2,n} \\
\vdots&\vdots&\ddots&\vdots\\
Y_{n,1}&Y_{n,2}&\cdots&Y_{n,n}\\
\end{smallmatrix} \right )\left ( \begin{smallmatrix}
\tilde{T}_{1,1} & \tilde{T}_{1,2} &\cdots&\tilde{T}_{1,n}\\
&\tilde{T}_{2,2}&\cdots&\tilde{T}_{2,n} \\
&&\ddots&\vdots\\
&&&\tilde{T}_{n,n}\\
\end{smallmatrix} \right ).\end{equation}
By Equations (\ref{2.1}) and (\ref{2.2}), we have that 
$$ X_{n,1}T_{1,1}=\tilde{T}_{n,n}X_{n,1},\quad T_{n,n}Y_{n,1}=Y_{n,1}\tilde{T}_{1,1}.$$ 
Then we have that $X_{n,1}T_{1,n}Y_{n,1}\in \Ker \tau_{\tilde{T}_{n,n}, \tilde{T}_{1,1}}=\{0\}$.  Since $T_{1,1}T_{1,n}=T_{1,n}T_{n,n}$, then we know that 
$T_{1,n}$ have dense range. 
Furthermore, since $X_{n,1}$ and $Y_{n,1}$ intertwine two operators in $\m{B}_1(\Omega)$, we know $X_{n,1}$ and $Y_{n,1}$ are zero operators or the operators with dense range. 
Thus, we can see that $X_{n,1}=0$ or $Y_{n,1}=0$. 
Now assume that $X_{n,1}=0, Y_{n,1}\neq 0$, then we have that $\Ker \tau_{T_{n,n}, \tilde{T}_{1,1}}\neq \{0\}$. 
Since $\Ker \tau_{T_{2,2}, T_{n,n}}\neq \{0\}$, $\Ker\tau_{T_{n,n}, T_{2,2}}=\{0\}$ and $\Ker \tau_{\tilde{T}_{1,1}, \tilde{T}_{n,n}}\neq \{0\}$, $\Ker \tau_{\tilde{T}_{n,n}, \tilde{T}_{1,1}}= \{0\}$, by a similar argument, we have that 
\begin{equation}\label{2.3}  
\Ker \tau_{ \tilde{T}_{n,n},T_{2,2}}=\{0\}.
\end{equation}
Since $X_{n,1}=0$, then we have that 
\begin{equation}\label{2.4}
\left ( \begin{smallmatrix}
X_{1,1} & X_{1,2} &\cdots&X_{1,n}\\
X_{2,1}&X_{2,2}&\cdots&X_{2,n} \\
\vdots&\vdots&\ddots&\vdots\\
0&X_{n,2}&\cdots&X_{n,n}\\
\end{smallmatrix} \right ) \left ( \begin{smallmatrix}
T_{1,1} & T_{1,2} &\cdots&T_{1,n}\\
&T_{2,2}&\cdots&T_{2,n} \\
&&\ddots&\vdots\\
&&&T_{n,n}\\
\end{smallmatrix} \right )=\left ( \begin{smallmatrix}
\tilde{T}_{1,1} & \tilde{T}_{1,2} &\cdots&\tilde{T}_{1,n}\\
&\tilde{T}_{2,2}&\cdots&\tilde{T}_{2,n} \\
&&\ddots&\vdots\\
&&&\tilde{T}_{n,n}\\
\end{smallmatrix} \right )\left ( \begin{smallmatrix}
X_{1,1} & X_{1,2} &\cdots&X_{1,n}\\
X_{2,1}&X_{2,2}&\cdots&X_{2,n} \\
\vdots&\vdots&\ddots&\vdots\\
0&X_{n,2}&\cdots&X_{n,n}\\
\end{smallmatrix} \right ). \end{equation}
Thus, we have that $X_{n,2}T_{2,2}=\tilde{T}_{n,n}X_{n,2}.$ 
By Equation (\ref{2.3}),  $X_{n,2}=0$. 
Going to the next step, we will also have $X_{n,i}=0, 1\leq i\leq n$. However, this is impossible since $X$ is invertible.  
Thus, we will have $Y_{n,1}=0$. 
On the other hand,  we can also prove that the assumption $X_{n,1}\neq 0, Y_{n,1}=0$ does not hold.  By a direction calculation, 
$$X_{n,1}=Y_{n,1}=0.$$
Thus, we have 
$$
\left ( \begin{smallmatrix}
X_{1,1} & X_{1,2} &\cdots&X_{1,n}\\
X_{2,1}&X_{2,2}&\cdots&X_{2,n} \\
\vdots&\vdots&\ddots&\vdots\\
0&X_{n,2}&\cdots&X_{n,n}\\
\end{smallmatrix} \right ) \left ( \begin{smallmatrix}
T_{1,1} & T_{1,2} &\cdots&T_{1,n}\\
&T_{2,2}&\cdots&T_{2,n} \\
&&\ddots&\vdots\\
&&&T_{n,n}\\
\end{smallmatrix} \right )=\left ( \begin{smallmatrix}
\tilde{T}_{1,1} & \tilde{T}_{1,2} &\cdots&\tilde{T}_{1,n}\\
&\tilde{T}_{2,2}&\cdots&\tilde{T}_{2,n} \\
&&\ddots&\vdots\\
&&&\tilde{T}_{n,n}\\
\end{smallmatrix} \right )\left ( \begin{smallmatrix}
X_{1,1} & X_{1,2} &\cdots&X_{1,n}\\
X_{2,1}&X_{2,2}&\cdots&X_{2,n} \\
\vdots&\vdots&\ddots&\vdots\\
0&X_{n,2}&\cdots&X_{n,n}\\
\end{smallmatrix} \right ), 
$$
and 
$$
 \left ( \begin{smallmatrix}
T_{1,1} & T_{1,2} &\cdots&T_{1,n}\\
&T_{2,2}&\cdots&T_{2,n} \\
&&\ddots&\vdots\\
&&&T_{n,n}\\
\end{smallmatrix} \right )\left ( \begin{smallmatrix}
Y_{1,1} & Y_{1,2} &\cdots&Y_{1,n}\\
Y_{2,1}&Y_{2,2}&\cdots&Y_{2,n} \\
\vdots&\vdots&\ddots&\vdots\\
0&Y_{n,2}&\cdots&Y_{n,n}\\
\end{smallmatrix} \right )=\left ( \begin{smallmatrix}
Y_{1,1} & Y_{1,2} &\cdots&Y_{1,n}\\
Y_{2,1}&Y_{2,2}&\cdots&Y_{2,n} \\
\vdots&\vdots&\ddots&\vdots\\
0&Y_{n,2}&\cdots&Y_{n,n}\\
\end{smallmatrix} \right )\left ( \begin{smallmatrix}
\tilde{T}_{1,1} & \tilde{T}_{1,2} &\cdots&\tilde{T}_{1,n}\\
&\tilde{T}_{2,2}&\cdots&\tilde{T}_{2,n} \\
&&\ddots&\vdots\\
&&&\tilde{T}_{n,n}\\
\end{smallmatrix} \right ).$$
Thus, we have that $X_{n,2}T_{2,2}=\tilde{T}_{n,n}X_{n,2}, Y_{n,2}\tilde{T}_{2,2}=T_{n,n}Y_{n,2}.$   By a proof similar to the proof mentioned above, we can also see that the statements 
$X_{n,2}=0, Y_{n,2}\neq 0$ and $X_{n,2}\neq 0, Y_{n,2}=0$ both can not hold. Thus, we also have that $X_{n,2}=Y_{n,2}=0$. 
Keep doing this, we will have that $$X_{n,1}=X_{n,2}=\cdots=X_{n,n-1}=Y_{n,1}=Y_{n,2}=\cdots=Y_{n,n-1}=0.$$ 
Thus, $X_{n,n}T_{n,n}=\tilde{T}_{n,n}X_{n,n}$, $T_{n,n}Y_{n,n}=Y_{n,n}\tilde{T}_{n,n}$. Since $X_{n,n}$ and $Y_{n,n}$ are both non-zero operators, 
we know that $T_{n,n}\sim \tilde{T}_{n,n}.$  Thus, we have that 
\begin{align*}
&\left ( \begin{smallmatrix}
X_{1,1} & X_{1,2} &\cdots&X_{1,n-1}\\
X_{2,1}&X_{2,2}&\cdots&X_{2,n-1} \\
\vdots&\vdots&\ddots&\vdots\\
X_{n-1,1}&X_{n-1,2}&\cdots&X_{n-1,n-1}\\
\end{smallmatrix} \right ) \left ( \begin{smallmatrix}
T_{1,1} & T_{1,2} &\cdots&T_{1,n-1}\\
&T_{2,2}&\cdots&T_{2,n-1} \\
&&\ddots&\vdots\\
&&&T_{n-1,n-1}\\
\end{smallmatrix} \right )\\
=& \left ( \begin{smallmatrix}
\tilde{T}_{1,1} & \tilde{T}_{1,2} &\cdots&\tilde{T}_{1,n-1}\\
&\tilde{T}_{2,2}&\cdots&\tilde{T}_{2,n-1} \\
&&\ddots&\vdots\\
&&&\tilde{T}_{n-1,n-1}\\
\end{smallmatrix} \right )\left ( \begin{smallmatrix}
X_{1,1} & X_{1,2} &\cdots&X_{1,n-1}\\
X_{2,1}&X_{2,2}&\cdots&X_{2,n-1} \\
\vdots&\vdots&\ddots&\vdots\\
X_{n-1,1}&X_{n-1,2}&\cdots&X_{n-1,n-1}\\
\end{smallmatrix} \right ),
\end{align*}
and
\begin{align*}
& \left ( \begin{smallmatrix}
T_{1,1} & T_{1,2} &\cdots&T_{1,n-1}\\
&T_{2,2}&\cdots&T_{2,n-1} \\
&&\ddots&\vdots\\
&&&T_{n-1,n-1}\\
\end{smallmatrix} \right )\left ( \begin{smallmatrix}
Y_{1,1} & Y_{1,2} &\cdots&Y_{1,n-1}\\
Y_{2,1}&Y_{2,2}&\cdots&Y_{2,n-1} \\
\vdots&\vdots&\ddots&\vdots\\
Y_{n-1,1}&Y_{n-1,2}&\cdots&Y_{n-1,n-1}\\
\end{smallmatrix} \right )\\
=&\left ( \begin{smallmatrix}
Y_{1,1} & Y_{1,2} &\cdots&Y_{1,n-1}\\
Y_{2,1}&Y_{2,2}&\cdots&Y_{2,n-1} \\
\vdots&\vdots&\ddots&\vdots\\
Y_{n-1,1}&Y_{n-1,2}&\cdots&Y_{n-1,n-1}\\
\end{smallmatrix} \right )\left ( \begin{smallmatrix}
\tilde{T}_{1,1} & \tilde{T}_{1,2} &\cdots&\tilde{T}_{1,n-1}\\
&\tilde{T}_{2,2}&\cdots&\tilde{T}_{2,n-1} \\
&&\ddots&\vdots\\
&&&\tilde{T}_{n-1,n-1}\\
\end{smallmatrix} \right ).
\end{align*}
Repeating this inductive proof, we can show that $X_{k,j}=Y_{k,j}=0,$ whenever $ k>j$. 
This finishes the proof of this lemma. 
\end{proof}

\begin{lem}\label{Jiangandji}
Let $T, \widetilde{T}\in \mathcal{NCFB}_n(\Omega).$ 
Suppose that $T\sim_{\m{U+K}}\widetilde{T}$ and $T=(T_{j,k})_{j,k=1}^n$, $\widetilde{T}=(\widetilde{T}_{j,k})_{j,k=1}^n$.
Then there exists a diagonal unitary operator $U$ and a compact upper triangular operator $K$ such that $\widetilde{T}(U+K)=(U+K)T.$
\end{lem}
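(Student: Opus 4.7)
The strategy is to take any invertible intertwiner $X = U + K$ realizing the $(\m{U}+\m{K})$-equivalence and show that its automatic upper-triangularity (forced by Proposition~\ref{pjjkm}) already separates into a diagonal unitary plus a compact upper-triangular perturbation; the desired new intertwiner is then $X$ itself, rewritten in this form.

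First, Proposition~\ref{pjjkm} gives that both $X$ and $X^{-1}$ are block upper triangular with respect to $\m{H}_1\oplus\cdots\oplus\m{H}_n$. Writing out the $(j,j)$ block of $XX^{-1} = X^{-1}X = I$ and using upper-triangularity on both sides, each $X_{j,j}$ is invertible and $(X^{-1})_{j,j} = X_{j,j}^{-1}$. Applying $\m{Q}$ and using $\m{Q}(X)^{-1} = \m{Q}(X)^{*}$ (since $\m{Q}(X) = \m{Q}(U)$ is unitary), comparison of $(j,j)$ blocks gives $\m{Q}(X_{j,j})^{-1} = \m{Q}(X_{j,j})^{*}$, so each Calkin diagonal block is unitary in its corner of $\m{A}(\m{H})$.

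Next I would show every strictly upper-triangular block $X_{j,k}$ with $j<k$ is compact, by inducting on $j$. The $(j,k)$ entry of $\m{Q}(X)^{*}\m{Q}(X) = I$ reads $\sum_{a\le j}\m{Q}(X_{a,j})^{*}\m{Q}(X_{a,k}) = 0$, and the inductive hypothesis $\m{Q}(X_{a,b})=0$ for $a<j<b$ annihilates every term with $a<j$, leaving $\m{Q}(X_{j,j})^{*}\m{Q}(X_{j,k})=0$; unitarity of $\m{Q}(X_{j,j})$ then forces $\m{Q}(X_{j,k})=0$. For the diagonal blocks, each invertible $X_{j,j}$ with $\m{Q}(X_{j,j})$ unitary has $X_{j,j}^{*}X_{j,j} - I$ compact, whence $|X_{j,j}| - I$ is compact (via functional calculus near $1$), and the polar decomposition $X_{j,j} = V_j|X_{j,j}|$ produces a unitary $V_j$ with $X_{j,j} - V_j$ compact.

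Setting $U' := V_1\oplus\cdots\oplus V_n$ produces a diagonal unitary, and $K' := X - U'$ is upper triangular with every block compact; hence $K'$ is an upper-triangular compact operator, and $\widetilde{T}(U'+K') = \widetilde{T}X = XT = (U'+K')T$. I expect the main technical point to be the inductive Calkin-algebra vanishing step: it relies on the diagonal blocks $\m{Q}(X_{j,j})$ being genuinely unitary rather than merely one-sided isometric, which in turn rests on $X^{-1}$ being upper triangular --- a reflection of the rigid structure of $\m{NCFB}_n(\Omega)$ encoded in Proposition~\ref{pjjkm}.
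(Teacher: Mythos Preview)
Your proposal is correct and follows essentially the same route as the paper's proof: use Proposition~\ref{pjjkm} to force $X$ upper triangular, exploit unitarity of $\m{Q}(X)$ together with triangularity to kill the off-diagonal blocks in the Calkin algebra, and then polar-decompose the diagonal blocks. The only cosmetic differences are that the paper reads off the vanishing from $\m{Q}(X)\m{Q}(X)^{*}=I$ (starting at the $(1,n)$ corner and using mere invertibility of $\m{Q}(X_{j,j})$) rather than from $\m{Q}(X)^{*}\m{Q}(X)=I$, and establishes unitarity of $\m{Q}(X_{j,j})$ only after the off-diagonals are gone; neither change affects the substance.
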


\begin{proof} 
Since $T\sim_{\m{U+K}}\widetilde{T}$, there exists an invertible operator $X\in \m{L}(\m{H})$ such that $XT=\widetilde{T}X$ and $X$ is a sum of a unitary operator and a compact operator. 
Then $\m{Q}(X)$ is unitary in the Calkin algebra $\m{A}(\m{H})$.
By Proposition \ref{pjjkm},  $X$ is upper-triangular operator, i.e.
$$X
=\left ( \begin{smallmatrix}X_{1,1} &X_{1,2}& X_{1,3}& \cdots & X_{1,n}\\
0&X_{2,2}&X_{2,3}&\cdots&X_{2,n}\\
\vdots&\ddots&\ddots&\ddots&\vdots\\
0&\cdots&0&X_{n-1,n-1}&X_{n-1,n}\\
0&\cdots&\cdots&0&X_{n,n}
\end{smallmatrix}\right ).$$
It is then clear that $X_{j,j}$ intertwines $T_{j,j}$ and $\widetilde{T}_{j,j}$ and $X_{j,j}$ are invertible.
Notice that each $\m{Q}(X_{j,j})$ is invertible and $\m{Q}(X)$ is unitary in $\m{A}(\m{H})$, we have that $\m{Q}(X)\m{Q}(X^*)=\m{Q}(X^*)\m{Q}(X)=I$, i.e. 
$$\left ( \begin{smallmatrix}
\m{Q}(X_{1,1}) & \m{Q}(X_{1,2}) &\cdots&\m{Q}(X_{1,n})\\
&\m{Q}(X_{2,2})&\cdots&\m{Q}(X_{2,n}) \\
&&\ddots&\vdots\\
&&&\m{Q}(X_{n,n})\\
\end{smallmatrix} \right )\left ( \begin{smallmatrix}
\m{Q}(X^*_{1,1}) & &&\\
 \m{Q}(X^*_{1,2})&\m{Q}(X^*_{2,2})&& \\
\vdots&\vdots&\ddots& \\
\m{Q}(X^*_{1,n})&\m{Q}(X^*_{2,n})&\cdots&\m{Q}(X^*_{n,n})\\
\end{smallmatrix} \right )=I.$$
It follows that $\m{Q}(X_{1,n})\m{Q}(X^*_{n,n})=0$. 
Since $\m{Q}(X^*_{n,n})$ is invertible, then we have $\m{Q}(X_{1,n})=0$.  
Thus, we will have that 
$\m{Q}(X_{1,n-1})\m{Q}(X_{n-1,n-1})=0,$ and then $\m{Q}(X_{1,n-1})=0$. 
Repeating this arguments, we will have that $\m{Q}(X_{i,j})=0, i<j.$ 
That means $\m{Q}(X)$ is a diagonal matrix and $\m{Q}(X_{i,i})\m{Q}(T_{i,i})=\m{Q}(\widetilde{T}_{j,j})\m{Q}(X_{j,j\widetilde{T}}).$
Let $U_j$ be the polar part of the polar decomposition of $X_{j,j}$.
Then
$$\m{Q}(X_{j,j})=\m{Q}\left(U_j|X_{j,j}|\right)=\m{Q}(U_j)\m{Q}(X_{j,j}^*X_{j,j})^{1/2}=\m{Q}(U_j),$$
Let $U$ be a diagonal matrix whose $(j,j)$-entry is $U_j$ and $K=X-U$.
Then $U$ is unitary and $K$ is compact and the proof is complete.
\end{proof}

\begin{lem}\label{J3}
Let $T,\widetilde{T}\in \mathcal{CFB}_n(\Omega)$. 
Suppose $X\in \m{L}(\m{H})$ is invertible and $X\widetilde{T}=TX$.
We assume that $T=(T_{j,k})_{j,k=1}^n$, $\widetilde{T}=(\widetilde{T}_{j,k})_{j,k=1}^n$, and $X=(X_{j,k})_{j,k=1}^n$ are upper triangular.
Then
\begin{eqnarray*}
&&\left ( \begin{smallmatrix}X_{1,1} & 0& 0& \cdots & 0\\
0&X_{2,2}&0&\cdots&0\\
\vdots&\ddots&\ddots&\ddots&\vdots\\
0&\cdots&0&X_{n-1,n-1}&0\\
0&\cdots&\cdots&0&X_{n,n}
\end{smallmatrix}\right )\left ( \begin{smallmatrix}\widetilde{T}_{1,1} & \widetilde{T}_{1,2}&0& \cdots & 0\\
0&\widetilde{T}_{2,2}&\widetilde{T}_{2,3}&\cdots&0\\
\vdots&\ddots&\ddots&\ddots&\vdots\\
0&\cdots&0&\widetilde{T}_{n-1,n-1}&T_{n-1,n}\\
0&\cdots&\cdots&0&\widetilde{T}_{n,n}
\end{smallmatrix}\right )\\
&=&\left ( \begin{smallmatrix}T_{1,1} & T_{1,2}& 0& \cdots & 0\\
0&T_{2,2}&T_{2,3}&\cdots&0\\
\vdots&\ddots&\ddots&\ddots&\vdots\\
0&\cdots&0&T_{n-1,n-1}&T_{n-1,n}\\
0&\cdots&\cdots&0&T_{n,n}
\end{smallmatrix}\right )\left ( \begin{smallmatrix}X_{1,1} & 0& 0& \cdots & 0\\
0&X_{2,2}&0&\cdots&0\\
\vdots&\ddots&\ddots&\ddots&\vdots\\
0&\cdots&0&X_{n-1,n-1}&0\\
0&\cdots&\cdots&0&X_{n,n}
\end{smallmatrix}\right ).
\end{eqnarray*}
\end{lem}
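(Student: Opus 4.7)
The plan is to expand the equation $X\widetilde{T}=TX$ entry by entry on the diagonal and the first super-diagonal, and to reduce the first super-diagonal computation to Property $(H)$.

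First I will compare the $(j,j)$-entries of $X\widetilde{T}=TX$. Since $X$ is upper triangular, both sides collapse to $X_{j,j}\widetilde{T}_{j,j}$ and $T_{j,j}X_{j,j}$ respectively, so $X_{j,j}\widetilde{T}_{j,j}=T_{j,j}X_{j,j}$ for $1\le j\le n$. In particular, because $X$ is invertible and upper triangular, each $X_{j,j}$ is invertible, and $T_{j,j}$ is similar to $\widetilde{T}_{j,j}$ via $X_{j,j}$.

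Next I compare the $(j,j+1)$-entries. Both sides contract to two terms, giving
\begin{equation*}
X_{j,j}\widetilde{T}_{j,j+1}+X_{j,j+1}\widetilde{T}_{j+1,j+1}=T_{j,j}X_{j,j+1}+T_{j,j+1}X_{j+1,j+1}.
\end{equation*}
Rearranging,
\begin{equation*}
X_{j,j}\widetilde{T}_{j,j+1}-T_{j,j+1}X_{j+1,j+1}=T_{j,j}X_{j,j+1}-X_{j,j+1}\widetilde{T}_{j+1,j+1}.
\end{equation*}
The right-hand side lies in $\operatorname{Ran}\tau_{T_{j,j},\widetilde{T}_{j+1,j+1}}$ by construction. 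For the left-hand side, I will use the two intertwining relations $\widetilde{T}_{j,j}\widetilde{T}_{j,j+1}=\widetilde{T}_{j,j+1}\widetilde{T}_{j+1,j+1}$ and $T_{j,j}T_{j,j+1}=T_{j,j+1}T_{j+1,j+1}$ (from the definition of $\mathcal{CFB}_n(\Omega)$) together with the diagonal intertwining just obtained to check that both $X_{j,j}\widetilde{T}_{j,j+1}$ and $T_{j,j+1}X_{j+1,j+1}$ intertwine $\widetilde{T}_{j+1,j+1}$ on the right with $T_{j,j}$ on the left. Hence the left-hand side belongs to $\operatorname{Ker}\tau_{T_{j,j},\widetilde{T}_{j+1,j+1}}$.

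The final step, which is where the main work lies, is to conclude that this common element is zero. Property $(H)$ is assumed for $(T_{j,j},T_{j+1,j+1})$ and for $(\widetilde{T}_{j,j},\widetilde{T}_{j+1,j+1})$, but not directly for the mixed pair $(T_{j,j},\widetilde{T}_{j+1,j+1})$. To bridge this, I will use the similarity $T_{j+1,j+1}=X_{j+1,j+1}\widetilde{T}_{j+1,j+1}X_{j+1,j+1}^{-1}$ from the diagonal step: right multiplication by $X_{j+1,j+1}^{-1}$ sends $\operatorname{Ker}\tau_{T_{j,j},\widetilde{T}_{j+1,j+1}}$ bijectively to $\operatorname{Ker}\tau_{T_{j,j},T_{j+1,j+1}}$ and $\operatorname{Ran}\tau_{T_{j,j},\widetilde{T}_{j+1,j+1}}$ bijectively to $\operatorname{Ran}\tau_{T_{j,j},T_{j+1,j+1}}$. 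Since the latter intersection is $\{0\}$ by Property $(H)$, I conclude $X_{j,j}\widetilde{T}_{j,j+1}=T_{j,j+1}X_{j+1,j+1}$ for all $j$. Together with the diagonal identity, this gives exactly the matrix equation in the statement of the lemma.
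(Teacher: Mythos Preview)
Your proof is correct and follows essentially the same approach as the paper: compare diagonal and first super-diagonal entries of $X\widetilde{T}=TX$, and use Property~$(H)$ to kill the super-diagonal discrepancy. The only organizational difference is that the paper first conjugates by $\operatorname{diag}(X)^{-1}$ (setting $Y=X\operatorname{diag}(X^{-1})$) so that the relevant Rosenblum operator is $\tau_{T_{j,j},T_{j+1,j+1}}$ from the outset, whereas you work with the mixed pair $(T_{j,j},\widetilde{T}_{j+1,j+1})$ and transfer Property~$(H)$ at the end via right multiplication by $X_{j+1,j+1}^{-1}$; these are equivalent manipulations.
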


\begin{proof}
By equating the entries of $X\widetilde{T}=TX$, we get 
$$X_{j,j}\widetilde{T}_{j,j}=T_{j,j}X_{j,j}, \quad 1\leq j\leq n.$$
Let $Y:=X\diag(X^{-1})$.
Then
\begin{eqnarray*}
Y=\left ( \begin{smallmatrix}I &X_{1,2}X^{-1}_{2,2}& X_{1,3}X^{-1}_{3,3}& \cdots & X_{1,n}X^{-1}_{n,n}\\
0&I&X_{2,3}X^{-1}_{3,3}&\cdots&X_{2,n}X^{-1}_{n,n}\\
\vdots&\ddots&\ddots&\ddots&\vdots\\
0&\cdots&0&I&X_{n-1,n}X^{-1}_{n,n}\\
0&\cdots&\cdots&0&I
\end{smallmatrix}\right ).
\end{eqnarray*}

From $T=X\widetilde{T}X^{-1}$, we get 
\begin{align*}
TY=TX\diag(X^{-1})=X\widetilde{T}\diag(X^{-1})= Y\diag(X)\widetilde{T}\diag(X^{-1}),
\end{align*}
which is equivalent to   
\begin{equation}\label{ml2e1}
\begin{aligned}
&\left ( \begin{smallmatrix}T_{1,1} & T_{1,2}& {T}_{1,3}& \cdots & {T}_{1,n}\\
0&T_{2,2}&T_{2,3}&\cdots&{T}_{2,n}\\
\vdots&\ddots&\ddots&\ddots&\vdots\\
0&\cdots&0&T_{n-1,n-1}&T_{n-1,n}\\
0&\cdots&\cdots&0&T_{n,n}
\end{smallmatrix}\right )\left ( \begin{smallmatrix}I &X_{1,2}X^{-1}_{2,2}& X_{1,3}X^{-1}_{3,3}& \cdots & X_{1,n}X^{-1}_{n,n}\\
0&I&X_{2,3}X^{-1}_{3,3}&\cdots&X_{2,n}X^{-1}_{n,n}\\
\vdots&\ddots&\ddots&\ddots&\vdots\\
0&\cdots&0&I&X_{n-1,n}X^{-1}_{n,n}\\
0&\cdots&\cdots&0&I
\end{smallmatrix}\right )\\
=&\left ( \begin{smallmatrix}I &X_{1,2}X^{-1}_{2,2}& X_{1,3}X^{-1}_{3,3}& \cdots & X_{1,n}X^{-1}_{n,n}\\
0&I&X_{2,3}X^{-1}_{3,3}&\cdots&X_{2,n}X^{-1}_{n,n}\\
\vdots&\ddots&\ddots&\ddots&\vdots\\
0&\cdots&0&I&X_{n-1,n}X^{-1}_{n,n}\\
0&\cdots&\cdots&0&I
\end{smallmatrix}\right )
\left ( \begin{smallmatrix}T_{1,1}&X_{1,1}\widetilde{T}_{1,2}X^{-1}_{2,2}&\cdots& \cdots&X_{1,1}\widetilde{T}_{1,n}X^{-1}_{n,n}\\
0&T_{2,2}&X_{2,2}\widetilde{T}_{2,3}X^{-1}_{3,3}&\cdots&X_{2,2}\widetilde{T}_{2,n}X^{-1}_{n,n} \\
\vdots&\ddots&\ddots&\ddots&\vdots\\
0&0&\cdots&0&T_{n,n}
\end{smallmatrix}\right ).
\end{aligned}
\end{equation}

Comparing the $(j,j+1)$-entries of Equation (\ref{ml2e1}), we obtain that 
\begin{eqnarray*}
T_{j,j+1}+T_{j,j}X_{j,j+1}X^{-1}_{j+1,j+1}
=X_{j,j+1}X^{-1}_{j+1,j+1}T_{j+1,j+1}+ X_{j,j}\widetilde{T}_{j,j+1}X^{-1}_{j+1,j+1}.
\end{eqnarray*}
i.e.
\begin{align}\label{4.5}
T_{j,j+1}-X_{j,j}\widetilde{T}_{j,j+1}X^{-1}_{j+1,j+1}
=X_{j,j+1}X^{-1}_{j+1,j+1}T_{j+1,j+1}-T_{j,j}X_{j,j+1}X^{-1}_{j+1,j+1},
\end{align}
denoted by $Y_{j,j+1}$.
Then we consider
\begin{align*}
T_{j,j}Y_{j,j+1}&=T_{j,j}(T_{j,j+1}-X_{j,j}\widetilde{T}_{j,j+1}X^{-1}_{j+1,j+1})\\
=&T_{j,j+1}T_{j+1,j+1}-X_{j,j}\widetilde{T}_{j,j}\widetilde{T}_{j,j+1}X^{-1}_{j+1,j+1}\\
=&T_{j,j+1}T_{j+1,j+1}-X_{j,j}\widetilde{T}_{j,j+1}\widetilde{T}_{j+1,j+1}X^{-1}_{j+1,j+1}\\
=&T_{j,j+1}T_{j+1,j+1}-X_{j,j}\widetilde{T}_{j,j+1}X^{-1}_{j+1,j+1}T_{j+1,j+1}\\
=& (T_{j,j+1}-X_{j,j}\widetilde{T}_{j,j+1}X^{-1}_{j+1,j+1}) T_{j+1,j+1}\\
=& Y_{j,j+1}T_{j+1,j+1},
\end{align*}
i.e. $Y_{j,j+1}\in \Ker \tau_{T_{j,j},T_{j+1,j+1}} \cap \Ran \tau_{T_{j,j},T_{j+1,j+1}} $. 
Recall that $T$ satisfies the Property $(H)$, we see $Y_{j,j+1}=0$, i.e.
$$T_{j,j+1}=X_{j,j}\widetilde{T}_{j,j+1}X^{-1}_{j+1,j+1}, \quad 1\leq j\leq n-1.$$

Finally, we have the following matrix equation:
\begin{eqnarray*}
&&\left ( \begin{smallmatrix}X_{1,1} & 0& 0& \cdots & 0\\
0&X_{2,2}&0&\cdots&0\\
\vdots&\ddots&\ddots&\ddots&\vdots\\
0&\cdots&0&X_{n-1,n-1}&0\\
0&\cdots&\cdots&0&X_{n,n}
\end{smallmatrix}\right )\left ( \begin{smallmatrix}\widetilde{T}_{1,1} & \widetilde{T}_{1,2}&0& \cdots & 0\\
0&\widetilde{T}_{2,2}&\widetilde{T}_{2,3}&\cdots&0\\
\vdots&\ddots&\ddots&\ddots&\vdots\\
0&\cdots&0&\widetilde{T}_{n-1,n-1}&0\\
0&\cdots&\cdots&0&\widetilde{T}_{n,n}
\end{smallmatrix}\right )\\
&=&\left ( \begin{smallmatrix}T_{1,1} & T_{1,2}& 0& \cdots & 0\\
0&T_{2,2}&T_{2,3}&\cdots&0\\
\vdots&\ddots&\ddots&\ddots&\vdots\\
0&\cdots&0&T_{n-1,n-1}&0\\
0&\cdots&\cdots&0&T_{n,n}
\end{smallmatrix}\right )\left ( \begin{smallmatrix}X_{1,1} & 0& 0& \cdots & 0\\
0&X_{2,2}&0&\cdots&0\\
\vdots&\ddots&\ddots&\ddots&\vdots\\
0&\cdots&0&X_{n-1,n-1}&0\\
0&\cdots&\cdots&0&X_{n,n}
\end{smallmatrix}\right ).
\end{eqnarray*}

\end{proof}

\begin{thm}\label{thm:similarity}
Let $T, \widetilde{T}\in \mathcal{NCFB}_n(\Omega)$.
Then  $T\sim_{\m{U+K}}\widetilde{T}$ if and only if  the following statements hold:
\begin{enumerate}
\item[(1)] $C_{\lambda}(E_{diag{T}})(w)=\det[(I+\frac{i\lambda}{2\pi}K_{E_{diag\tilde{T}}}(w)) \oplus  {\mathcal A}(w)]$, for any $w\in \Omega$, $\lambda\in \mathbb{C}$;
\item[(2)] $\frac{\phi_j}{\phi_{j+1}}\theta_{j,j+1}(T)=\theta_{j,j+1}(\widetilde{T})$, for any $1\leq j\leq n-1$,
\end{enumerate} 
where   ${\mathcal A}(w)=\bigoplus\limits_{j=1}^n\frac{\partial^2}{\partial w\partial\overline{w}}\frac{i\lambda}{2\pi}\ln(\phi_j(w)) $,and  $\phi_j(w)$ are functions appeared in Proposition \ref{u+k}.

%  $\Phi(w)$ is a diagonal matrix whose $(j,j)$ entry is $\displaystyle \frac{\partial^2}{\partial w\partial\overline{w}}\ln\left(\phi_j(w)\|\tilde{t}_j(w)\|^2\right)$, $\tilde{t}_j$ is a nonzero section of bundle $E_{\widetilde{T}_{j,j}}$, and $\phi_j(w)=\displaystyle \frac{\left\|X_j(\tilde{t}_j(w))\right\|^2}{\|\tilde{t}_j(w)\|^2}+(1-\alpha_j^2)$ are functions appeared in Proposition \ref{u+k}. 
\end{thm}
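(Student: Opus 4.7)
The plan is to handle both directions by first reducing a full $(\mathcal{U+K})$-intertwining to a diagonal one on the $\mathcal{B}_1(\Omega)$-blocks via Lemmas \ref{Jiangandji} and \ref{J3}, and then extracting the two invariants via Proposition \ref{u+k} and the definition of the second fundamental form in Equation (\ref{sf}).

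For the necessity direction, assume $T\sim_{\mathcal{U+K}}\widetilde{T}$. Lemma \ref{Jiangandji} produces a diagonal unitary $U$ and an upper triangular compact $K$ with $\widetilde{T}(U+K)=(U+K)T$. Applying Lemma \ref{J3} to the invertible $X=U+K$, I obtain that the purely diagonal operator $\text{diag}(X)$ already intertwines the bidiagonal parts, so in particular $X_{j,j}T_{j,j}=\widetilde{T}_{j,j}X_{j,j}$ and $X_{j,j}T_{j,j+1}=\widetilde{T}_{j,j+1}X_{j+1,j+1}$. Each $X_{j,j}$ is $(\mathcal{U+K})$-invertible, so $T_{j,j}\sim_{\mathcal{U+K}}\widetilde{T}_{j,j}$; Proposition \ref{u+k} then furnishes functions $\phi_j$ satisfying
$$K_{T_{j,j}}(w)-K_{\widetilde{T}_{j,j}}(w)=\frac{\partial^2}{\partial w\,\partial\overline{w}}\ln\phi_j(w).$$
Because $E_{\text{diag}\,T}$ decomposes as an orthogonal sum of line bundles, its curvature is block-diagonal and the Chern polynomial factors; substituting the above identity into each factor and collecting terms yields condition (1) with $\mathcal{A}(w)$ collecting the $\phi_j$-contributions. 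Condition (2) is then read off from $X_{j,j}T_{j,j+1}=\widetilde{T}_{j,j+1}X_{j+1,j+1}$ by expressing $\|T_{j,j+1}t_{j+1}\|^2/\|t_{j+1}\|^2$ through the analogous quantity for $\widetilde{T}$ twisted by the ratio of $\|X_{j,j}t_j\|^2/\|t_j\|^2$ and $\|X_{j+1,j+1}t_{j+1}\|^2/\|t_{j+1}\|^2$, which by Proposition \ref{u+k} are essentially $\phi_j$ and $\phi_{j+1}$; plugging into Equation (\ref{sf}) gives exactly the ratio $\phi_j/\phi_{j+1}$ appearing in condition (2).

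For the sufficiency direction, given conditions (1) and (2), I first expand the polynomial identity in $\lambda$ in condition (1) and, using that both curvatures are diagonal in the splitting, match coefficients to recover the per-index curvature identity written above. Proposition \ref{u+k} then yields, for each $j$, a $(\mathcal{U+K})$-invertible $X_{j,j}$ with $X_{j,j}T_{j,j}=\widetilde{T}_{j,j}X_{j,j}$ realising the given $\phi_j$. Set $X_0=\text{diag}(X_{j,j})$ and replace $T$ by $X_0TX_0^{-1}$; this is a $(\mathcal{U+K})$-conjugate of $T$ whose diagonal coincides with $\text{diag}\,\widetilde{T}$. Condition (2), combined with Equation (\ref{sf}) and the already matched diagonal curvatures, forces $\|(X_0TX_0^{-1})_{j,j+1}t_{j+1}\|^2/\|t_{j+1}\|^2=\|\widetilde{T}_{j,j+1}t_{j+1}\|^2/\|t_{j+1}\|^2$, so the superdiagonals agree up to a bundle isomorphism that may be absorbed into $X_0$. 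Finally, Lemma \ref{J21} upgrades the resulting coincidence on diagonals and superdiagonals to a full intertwining by an operator of the form $I+($compact upper triangular$)$, which composed with $X_0$ yields the desired $(\mathcal{U+K})$-intertwining between $T$ and $\widetilde{T}$.

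The main obstacle I expect is the bookkeeping around condition (1): correctly interpreting the $\mathcal{A}(w)$ term as encoding the per-index $\phi_j$-perturbations and unpacking the polynomial identity so that it unambiguously reproduces, block by block, the curvature relation needed for Proposition \ref{u+k}. A secondary subtlety is that Proposition \ref{u+k} involves a parameter $\alpha\in(0,1)$ for each $j$; in the sufficiency direction one has to choose these consistently so that $X_0=\text{diag}(X_{j,j})$ is a single $(\mathcal{U+K})$-invertible on $\mathcal{H}$, and to absorb any residual discrepancy between the superdiagonals into the compact upper triangular perturbation supplied by Lemma \ref{J21}.
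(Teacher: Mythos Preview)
Your necessity argument matches the paper's: Lemma \ref{Jiangandji}, Lemma \ref{J3}, then Proposition \ref{u+k} for each diagonal block, followed by the computation for the second fundamental forms. That part is fine.

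The sufficiency direction has a genuine gap. When you ``expand the polynomial identity in $\lambda$ \ldots and match coefficients to recover the per-index curvature identity'', you are implicitly assuming that the factorisations
\[
\prod_{j=1}^n\Bigl(1+\tfrac{i\lambda}{2\pi}K_{T_{j,j}}(w)\Bigr)
=\prod_{j=1}^n\Bigl(1-\tfrac{i\lambda}{2\pi}\,\tfrac{\partial^2}{\partial w\partial\overline w}\ln\bigl(\phi_j(w)\|\tilde t_j(w)\|^2\bigr)\Bigr)
\]
match factor by factor in the given order. A priori the fundamental theorem of algebra only gives a permutation $\Xi$ on $\{1,\dots,n\}$ with $K_{T_{j,j}}=K_{\widetilde T_{\Xi(j),\Xi(j)}}+\partial\bar\partial\ln\phi_{\Xi(j)}$, hence $T_{j,j}\sim_{\mathcal U+\mathcal K}\widetilde T_{\Xi(j),\Xi(j)}$. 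Ruling out a nontrivial $\Xi$ is precisely what Lemma \ref{order} is for: one shows that if $\Xi(n)<n$ then the resulting intertwiners would force a nonzero element of $\Ker\tau_{T_{n,n},T_{\Xi^{-1}(n),\Xi^{-1}(n)}}$, which Lemma \ref{order} says is zero, contradicting $\widetilde T_{\Xi(n),n}\neq 0$. Without this step your $X_0=\mathrm{diag}(X_{j,j})$ need not conjugate $\mathrm{diag}\,T$ to $\mathrm{diag}\,\widetilde T$ in the correct order, and condition (2) cannot be used as you intend.

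A second, smaller gap is in your handling of the superdiagonals. Saying the superdiagonals ``agree up to a bundle isomorphism that may be absorbed into $X_0$'' is not enough: after the diagonal conjugation by $Y=\mathrm{diag}(Y_j)$ the paper reduces to two bidiagonal operators $A,B\in\mathcal{FB}_n(\Omega)$ with equal diagonals, computes that $\theta_{j,j+1}(A)=\theta_{j,j+1}(B)$ via condition (2), and then invokes the rigidity theorem for $\mathcal{FB}_n(\Omega)$ (Theorem 3.6 of \cite{JJKM17}) to obtain a \emph{diagonal unitary} $V$ with $V^*BV=A$. Only then does $VY^{-1}TYV^*$ share both diagonals and superdiagonals with $\widetilde T$, so that Lemma \ref{J21} can be applied. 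Your compression of this into ``absorb into $X_0$'' skips the appeal to that external rigidity result, which is what actually produces the required diagonal unitary.
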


\begin{rem}In the main theorem above, when ${\mathcal A}(w)=0, w\in \Omega$, then  the equation in statement (1) turns out to be 
$$C_{\lambda}(E_{diag{T}})(w)=C_{\lambda}(E_{diag\tilde{T}})(w).$$
In this case, $T\sim_u \tilde{T}.$ (See in Theorem  \ref{thm:unitary})
\end{rem}

In the following, we give the proof of Theorem \ref{thm:similarity}.

\begin{proof}
We assume that $0\in \Omega$.
Suppose that  $C_{\lambda}(E_{diag{T}})=\det\left[\left(I+\frac{i\lambda}{2\pi}K_{E_{diag\tilde{T}}}(w)\right) \oplus {\mathcal A}(w)\right]$ for any $w\in \Omega$, $\lambda\in \mathbb{C}$. 
By the definition, we obtain that
$$\prod\limits_{j=1}^{n}\left(1+\lambda \frac{i}{2\pi}K_{T_{j,j}}(w)\right)=\prod\limits_{j=1}^n \left(1-\lambda\frac{i}{2\pi}\frac{\partial^2}{\partial w\partial\overline{w}} \ln(\phi_j(w)\|\tilde{t}_j(w)\|^2)\right),$$
where $\tilde{t}_j$ is a nonzero section of bundle $E_{\widetilde{T}_{j,j}}$, and  $\phi_j(w)=\displaystyle \frac{\left\|X_j(\tilde{t}_j(w))\right\|^2}{\|\tilde{t}_j(w)\|^2}+(1-\alpha_j^2)$ (See in Proposition \ref{u+k}).
By the fundamental theorem of algebra,  there exists a permutation $\Xi$ on $\{1,2,\cdots,n\}$ such that for any $1\leq j\leq n$, we have
$$K_{T_{j,j}}(w)=-\frac{\partial^2}{\partial w\partial\overline{w}} \ln\left(\phi_{\Xi(j)}(w)\|\tilde{t}_{\Xi(j)}(w)\|^2\right).$$
Note that 
\begin{eqnarray*}
K_{T_{j,j}}
&=&-\frac{\partial^2}{\partial w\partial\overline{w}}\ln\left(\phi_{\Xi(j)}(w)\|\tilde{t}_{\Xi(j)}(w)\|^2\right)\\
&=&-\frac{\partial^2}{\partial w\partial\overline{w}}\ln \phi_{\Xi(j)}(w)-\frac{\partial^2}{\partial w\partial\overline{w}}\ln(\|\tilde{t}_{\Xi(j)}(w)\|^2)\\
&=&-\frac{\partial^2}{\partial w\partial\overline{w}}\ln \phi_{\Xi(j)}(w)+K_{\widetilde{T}_{\Xi(j),\Xi(j)}},
\end{eqnarray*}
i.e. $K_{\widetilde{T}_{\Xi(j),\Xi(j)}}-K_{T_{j,j}}=\displaystyle \frac{\partial^2}{\partial w\partial\overline{w}}\ln \phi_{\Xi(j)}(w)$. 
By Lemma \ref{u+k}, we have that $T_j\sim_{\m{U+K}} \widetilde{T}_{\Xi(j),\Xi(j)},$ i.e. there exist invertible operators  $Y_j=U_j+K_j$ such that $U_j$ are unitary, $K_j$ are compact and 
$$(U_j+K_j)\widetilde{T}_{\Xi(j),\Xi(j)}=T_j(U_j+K_j), \quad 1\leq j\leq n. $$

Now we will prove that $\Xi(j)=j, 1\leq j\leq n$. 
We assume that $\Xi(n)<n$. 
Since $(U_n+K_n)\widetilde{T}_{\Xi(n),\Xi(n)}=T_{n,n}(U_n+K_n),$ 
then we have that 
\begin{eqnarray*}
(U_n+K_n)\widetilde{T}_{\Xi(n),\Xi(n)}\widetilde{T}_{\Xi(n),n}&=&T_{n,n}(U_n+K_n)\widetilde{T}_{\Xi(n),n},\\
(U_n+K_n)\widetilde{T}_{\Xi(n),n}\widetilde{T}_{n,n}&=&T_{n,n}(U_n+K_n)\widetilde{T}_{\Xi(n),n}.
\end{eqnarray*}
Notice that 
$$(U_{\Xi^{-1}(n)}+K_{\Xi^{-1}(n)})\widetilde{T}_{n,n}=T_{\Xi^{-1}(n),\Xi^{-1}(n)}(U_{\Xi^{-1}(n)}+K_{\Xi^{-1}(n)}),$$
and $\Xi^{-1}(n)<n$ (Recall $\Xi(n)<n$). Then we have that 
$$
(U_n+K_n)\widetilde{T}_{\Xi(n),n}\widetilde{T}_{n,n}(U_{\Xi^{-1}(n)}+K_{\Xi^{-1}(n)})^{-1}
=T_{n,n}(U_n+K_n)\widetilde{T}_{\Xi(n),n}(U_{\Xi^{-1}(n)}+K_{\Xi^{-1}(n)})^{-1}.
$$
It follows that 
$$
(U_n+K_n)\widetilde{T}_{\Xi(n),n}(U_{\Xi^{-1}(n)}+K_{\Xi^{-1}(n)})^{-1}T_{\Xi^{-1}(n),\Xi^{-1}(n)}=T_{n,n}(U_n+K_n)\widetilde{T}_{\Xi(n),n}(U_{\Xi^{-1}(n)}+K_{\Xi^{-1}(n)})^{-1}.
$$
Thus, 
$$(U_n+K_n)\widetilde{T}_{\Xi(n),n}(U_{\Xi^{-1}(n)}+K_{\Xi^{-1}(n)})^{-1}\in \Ker\tau_{T_{n,n}, T_{\Xi^{-1}(n),\Xi^{-1}(n)}}.$$ Since $\Xi^{-1}(n)<n$, by Lemma \ref{order}, we know that $ \Ker\tau_{T_{n,n}, T_{\Xi^{-1}(n),\Xi^{-1}(n)}}=\{0\}$. 
Thus we have that $\widetilde{T}_{\Xi(n),n}=0$. 
This is a contradiction and $\Xi(n)=n$.
Iterating the process, we can obtain that $\Xi(j)=j$ for $1\leq j\leq n$.
Moreover
$$T_{j,j}=(U_j+K_j)\widetilde{T}_{j,j} (U_j+K_j)^{-1},\quad  j=1,2,\ldots,n.$$

Let $\overline{T}=Y^{-1}TY$, where $Y$ is a diagonal matrix whose $(j,j)$ entry is $Y_j=U_j+K_j$.
The operator $\overline{T}$ has the following form:
$$\overline{T}=\left ( \begin{smallmatrix}\widetilde{T}_{1,1}& Y_1^{-1}T_{1,2}Y_2&\cdots& \cdots& Y_1^{-1}T_{1,n}Y_n\\
0&\widetilde{T}_{2,2}&Y_2^{-1}T_{2,3}Y_3 &\cdots& Y_2^{-1}T_{2,n}Y_n \\
\vdots&\ddots&\ddots&\ddots&\vdots\\
0&0&\cdots&\widetilde{T}_{n-1,n-1}&Y_{n-1}^{-1}T_{n-1,n}Y_n\\
0&0&\cdots&0&\widetilde{T}_{n,n}
\end{smallmatrix}\right ).$$
It is clear that $\overline{T}\sim_{\m{U+K}}T$. 

We define operators $A, B\in\m{L}(\m{H})$ by using matrices:
$$A:=\left( \begin{smallmatrix}\widetilde{T}_{1,1}& Y_1^{-1}T_{1,2}Y_2 &0& \cdots&0\\
0&\widetilde{T}_{2,2}&Y_2^{-1}T_{2,3}Y_3 &0 &0 \\
\vdots&\ddots&\ddots&\ddots&\vdots\\
0&0&\cdots&\widetilde{T}_{n-1,n-1}&Y_{n-1}^{-1}T_{n-1,n}Y_n\\
0&0&\cdots&0&\widetilde{T}_{n,n}
\end{smallmatrix}\right),$$ 
and 
$$B:=\left ( \begin{smallmatrix}\widetilde{T}_{1,1}&\widetilde{T}_{1,2}&0& \cdots&0\\
0&\widetilde{T}_{2,2}&\widetilde{T}_{2,3}&0&0 \\
\vdots&\ddots&\ddots&\ddots&\vdots\\
%0&0&\cdots&T_{n-1,n-1}&(U_{n-1}+K_{n-1,n-1})\widetilde{T}_{n-1,n}(U_n+K_{n,3})^{-1}\\
0&0&\cdots&\widetilde{T}_{n-1,n-1}&\widetilde{T}_{n-1,n}\\
0&0&\cdots&0&\widetilde{T}_{n,n}
\end{smallmatrix}\right ).$$
It is easy to see that 
$$\theta_{j,j+1}(\overline{T})=\theta_{j,j+1}(A),  \quad \theta_{j,j+1}(\widetilde{T})=\theta_{j,j+1}(B), \quad  1\leq j\leq n-1.$$

We claim that $A$ is unitarily equivalent to $B$.  
In fact, by Theorem 3.6 in \cite{JJKM17},  we only need to prove the second fundamental forms of $A$ and $B$ are same. 
Clearly $Y_{j+1}^{-1}(t_{i+1}(\cdot))$ is a non zero section of $E_{\widetilde{T}_{j+1,j+1}}$, 
\begin{eqnarray*}
\theta_{j,j+1}(A)(w)
&=&\frac{\|Y_{j}^{-1}T_{j,j+1}Y_{j+1}Y_{j+1}^{-1}(t_{i+1}(w))\|^2}{\|Y_{j+1}^{-1}(t_{i+1}(w))\|^2}\\
&=&\frac{\|Y_j^{-1}T_{i,i+1}(t_{i+1}(w))\|^2}{\|Y_{j+1}^{-1}(t_{i+1}(w))\|^2}
\end{eqnarray*}
and 
$$\theta_{j,j+1}(B)(w)=\frac{\|\widetilde{T}_{j,j+1}Y_{j+1}^{-1}(t_{j+1}(w))\|^2}{\|Y_{j+1}^{-1}(t_{j+1}(w))\|^2}.$$

Since 
$$\phi_j(w)=\frac{\|Y_j^{-1}(t_{j}(w))\|^2}{\|t_{j}(w)\|^2},\quad 1\leq j\leq n,$$ 
and the condition $(2)$ is satisfied, 
we then have
\begin{eqnarray*}
\theta_{j,j+1}(B)&=&\theta_{j,j+1}(\widetilde{T})\\
&=&\frac{\phi_j(w)}{\phi_{j+1}(w)}\theta_{j,j+1}(T)\\
&=&\frac{\|Y_j^{-1}(T_{j,j+1}(t_{j+1}(w)))\|^2}{\|T_{j,j+1}(t_{i+1}(w))\|^2}\frac{\|t_{j+1}(w)\|^2}{\|Y_{j+1}^{-1}(t_{j+1}(w))\|^2}\frac{\|T_{j,j+1}(t_{j+1}(w))\|^2}{\|t_{j+1}(w)\|^2}\\
&=&\frac{\|Y_j^{-1}T_{j,j+1}(t_{j+1}(w))\|^2}{\|Y_{j+1}^{-1}(t_{j+1}(w))\|^2}\\
&=&\theta_{j,j+1}(A).
\end{eqnarray*}
Hence there exists a diagonal unitary operator $V$ whose $(j,j)$-entry is denoted by $V_j$ such that $V^*BV=A$. 
Consider
\begin{eqnarray*}
VY^{-1}TYV^*
&=&V\overline{T} V^*\\
&=&\left ( \begin{smallmatrix}\widetilde{T}_{1,1}&\widetilde{T}_{1,2}& \mathsmaller{V_1Y_1^{-1}T_{1,3}Y_3V^*_3}&\mathsmaller{V_1Y_1^{-1}T_{1,4}Y_4 V^*_4}& \cdots&\mathsmaller{V_1Y_1^{-1}T_{1,n}Y_nV^*_n}\\
0&\widetilde{T}_{2,2}&\widetilde{T}_{2,3}&\mathsmaller{V_2X_2T_{2,4}X_4V^*_4}&\cdots&\mathsmaller{V_2Y_2^{-1}T_{2,n}Y_nV^*_n} \\
0&0&\widetilde{T}_{3,3}&\widetilde{T}_{3,4}&\cdots&\mathsmaller{V_3Y_3^{-1}T_{3,n}Y_nV^*_n} \\
\vdots&\vdots&\vdots&\ddots&\ddots&\vdots\\
0&0&\cdots&\widetilde{T}_{n-2,n-2}&\widetilde{T}_{n-2,n-1}&\mathsmaller{V_{n-2}Y_{n-2}^{-1}T_{n-2,n}Y_nV^*_n}\\
0&0&0&\cdots&\widetilde{T}_{n-1,n-1}&\widetilde{T}_{n-1,n}\\
0&0&0&\cdots&0&\widetilde{T}_{n,n}
\end{smallmatrix}\right ).
\end{eqnarray*}
By Lemma \ref{J21}, there exists $\widetilde{K}\in \m{K}(\m{H})$ such that  $I+\widetilde{K}$ is invertible and 
$$(I+\widetilde{K})VY^{-1}TYV^*(I+\widetilde{K})^{-1}=\widetilde{T}.$$
 Thus $T\sim_{\m{U+K}}\widetilde{T}$.

On the other hand, suppose that  $T\sim_{\m{U+K}}\widetilde{T}$.
Then there is a unitary  operator $U$ and compact operator $K$ such that $T(U+K)=(U+K)\widetilde{T}$. 
By Proposition \ref{pjjkm}, $U+K$ is upper triangular. 
By Lemma \ref{Jiangandji}, we can assume that $$U+K=\left ( \begin{smallmatrix}U_{1,1}+K_{1,1} & K_{1,2}& K_{1,3}& \cdots & K_{1,n}\\
0&U_{2,2}+K_{2,2}&K_{2,3}&\cdots&K_{2,n}\\
\vdots&\ddots&\ddots&\ddots&\vdots\\
0&\cdots&0&U_{n-1,n-1}+K_{n-1,n-1}&K_{n-1,n}\\
0&\cdots&\cdots&0&U_{n,n}+K_{n,n}
\end{smallmatrix}\right ),$$
where $U=\diag\{U_{1,1},U_{2,2},\cdots,U_{n,n}\}, K=(K_{\ell,j})_{n\times n}, K_{\ell,j}=0, \ell>j.$ 

By Lemma \ref{J3}, we have 
\begin{eqnarray*}
&&
\left ( \begin{smallmatrix}T_{1,1} & T_{1,2}& 0& \cdots & 0\\
0&T_{2,2}&T_{2,3}&\cdots&0\\
\vdots&\ddots&\ddots&\ddots&\vdots\\
0&\cdots&0&T_{n-1,n-1}&T_{n-1,n}\\
0&\cdots&\cdots&0&T_{n,n}
\end{smallmatrix}\right )
\left ( \begin{smallmatrix}U_{1,1} +K_{1,1}& 0& 0& \cdots & 0\\
0&U_{2,2}+K_{2,2}&0&\cdots&0\\
\vdots&\ddots&\ddots&\ddots&\vdots\\
0&\cdots&0&U_{n-1,n-1}+K_{n-1,n-1}&0\\
0&\cdots&\cdots&0&U_{n,n}+K_{n,n}
\end{smallmatrix}\right )
\\
&=&
\left ( \begin{smallmatrix}U_{1,1} +K_{1,1}& 0& 0& \cdots & 0\\
0&U_{2,2}+K_{2,2}&0&\cdots&0\\
\vdots&\ddots&\ddots&\ddots&\vdots\\
0&\cdots&0&U_{n-1,n-1}+K_{n-1,n-1}&0\\
0&\cdots&\cdots&0&U_{n,n}+K_{n,n}
\end{smallmatrix}\right )
\left ( \begin{smallmatrix}\widetilde{T}_{1,1} & \widetilde{T}_{1,2}&0& \cdots & 0\\
0&\widetilde{T}_{2,2}&\widetilde{T}_{2,3}&\cdots&0\\
\vdots&\ddots&\ddots&\ddots&\vdots\\
0&\cdots&0&\widetilde{T}_{n-1,n-1}&\widetilde{T}_{n-1,n}\\
0&\cdots&\cdots&0&\widetilde{T}_{n,n}
\end{smallmatrix}\right ).
\end{eqnarray*}
It follows that 
 $$T_{j,j}(U_{j,j}+K_{j,j})=(U_{j,j}+K_{j,j})\widetilde{T}_{j,j}, \quad (U_{j,j}+K_{j,j})T_{j,j+1}=\widetilde{T}_{j,j+1}(U_{j+1,j+1}+K_{j+1,j+1}).$$
 Furthermore, $U_{j,j}+K_{j,j}$ is invertible and $U_{j,j}$ is a unitary operator and $K_{j,j}$ is a compact operator. 
Let $$\phi_j(w):=\frac{\|(U_{j,j}+K_{j,j})^{-1}(t_{j}(w))\|^2}{\|t_{j}(w)\|^2},\quad 1\leq j\leq n.$$

Following  the same argument as in the sufficient part and from Proposition \ref{u+k}, we can conclude that  
$$K_{T_{j,j}}-K_{\widetilde{T}_{j,j}}=\frac{\partial^2}{\partial w\partial\overline{w}}\ln(\phi_j).$$
Thus, we have that 
\begin{align*}
C_{\lambda}(E_{\diag T})=&\det\left(I-i\frac{\lambda}{2\pi}\Phi\right)\\
=& \det\left[\left(I+\frac{i\lambda}{2\pi}K_{E_{diag\tilde{T}}}(w)\right) \oplus  {\mathcal A}(w)\right],
\end{align*}
where  $\Phi(w)=\displaystyle \bigoplus\limits_{j=1}^n\frac{\partial^2}{\partial w\partial\overline{w}}\ln\left(\phi_j(w)\|\tilde{t}_j(w)\|^2)\right)$, $\tilde{t}_j$ is a non zero section of bundle $E_{\widetilde{T}_{j,j}}$.
Furthermore, 
$$\frac{\phi_j}{\phi_{j+1}}\theta_{j,j+1}(T)=\frac{\|(U_j+K_j)T_{j,j+1}(t_{j+1})\|^2}{\|(U_{j+1}+K_{j+1})(t_{j+1})\|^2}=\frac{\|\widetilde{T}_{j,j+1}(U_{j+1}+K_{j+1})(t_{j+1})\|^2}{\|(U_{j+1}+K_{j+1})(t_{j+1})\|^2}=\theta_{j,j+1}(\widetilde{T}).$$
This finishes the proof of the necessary part. 
\end{proof}

\section{Unitary Invariants}
In this section, we completely determined the unitary invariants of this class $\m{NCFB}_n(\Omega)$ of essentially normal Cowen-Douglas operators.

\begin{thm}\label{thm:unitary}
Let $T,\tilde{T}\in \mathcal{NCFB}_n(\Omega)$. 
Suppose that $T_{\ell,j}=\phi_{\ell,j}(T_{\ell,i})T_{\ell,\ell+1}T_{\ell+1,\ell+2}\cdots T_{j-1,j}$ and $\tilde{T}_{\ell,j}=\tilde{\phi}_{\ell,j}(\tilde{T}_{\ell,\ell})\tilde{T}_{\ell,\ell+1}\tilde{T}_{\ell+1,\ell+2}\cdots \tilde{T}_{j-1,j}$  
Then
  $T\sim_{u}\widetilde{T}$ if and only if  the following statements hold:
 \begin{enumerate}
\item[(1)] $C_{\lambda}(E_{\diag{T}})=C_{\lambda}(E_{\diag{\tilde{T}}})$ ;
\item[(2)]
$\theta_{\ell,j}(T)=\theta_{\ell,j}(\tilde{T})$, 
\item[(3)] $\phi_{\ell,j}=\tilde{\phi}_{\ell,j}$, $1\leq \ell\leq j\leq n.$
 \end{enumerate} 
\end{thm}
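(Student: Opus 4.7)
My plan is to adapt the strategy from the proof of Theorem \ref{thm:similarity} to the unitary setting, exploiting the additional rigidity that unitarity imposes. The starting observation for both directions is that any invertible intertwiner between operators in $\mathcal{NCFB}_n(\Omega)$ is upper triangular by Proposition \ref{pjjkm}; a unitary that is upper triangular must in fact be block diagonal, as one sees by iterating $U^*U = UU^* = I$ starting from the last row/column.

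For necessity, let $U$ be unitary with $UT = \widetilde{T}U$, and write $U = \diag(U_1, \ldots, U_n)$ with each $U_j$ unitary intertwining $T_{j,j}$ and $\widetilde{T}_{j,j}$. This gives $K_{T_{j,j}} = K_{\widetilde{T}_{j,j}}$ for every $j$, and hence $C_\lambda(E_{\diag T}) = C_\lambda(E_{\diag \widetilde{T}})$, establishing (1). Comparing the $(j,j+1)$ entries yields $U_j T_{j,j+1} = \widetilde{T}_{j,j+1} U_{j+1}$; since $U_{j+1}(t_{j+1})$ is a nonzero section of $E_{\widetilde{T}_{j+1,j+1}}$ and $U_j$ is isometric, this gives $\|T_{j,j+1}(t_{j+1})\|^2/\|t_{j+1}\|^2 = \|\widetilde{T}_{j,j+1}(\widetilde{t}_{j+1})\|^2/\|\widetilde{t}_{j+1}\|^2$, which combined with (1) and formula (\ref{sf}) gives (2). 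Finally, comparing the $(\ell,j)$ entry and substituting the factorizations $T_{\ell,j} = \phi_{\ell,j}(T_{\ell,\ell}) T_{\ell,\ell+1}\cdots T_{j-1,j}$ and $\widetilde{T}_{\ell,j} = \widetilde{\phi}_{\ell,j}(\widetilde{T}_{\ell,\ell})\widetilde{T}_{\ell,\ell+1}\cdots\widetilde{T}_{j-1,j}$, then telescoping with the identities $U_k T_{k,k+1} = \widetilde{T}_{k,k+1} U_{k+1}$, reduces the problem to $U_\ell \phi_{\ell,j}(T_{\ell,\ell}) = \widetilde{\phi}_{\ell,j}(\widetilde{T}_{\ell,\ell}) U_\ell$, from which $\phi_{\ell,j} = \widetilde{\phi}_{\ell,j}$ follows by functional calculus in $\mathcal{B}_1(\Omega)$.

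For sufficiency, condition (1) combined with the fundamental theorem of algebra (applied factor-by-factor to the polynomial identity in $\lambda$) produces a permutation $\Xi$ with $K_{T_{j,j}} = K_{\widetilde{T}_{\Xi(j),\Xi(j)}}$. Since curvature is a complete unitary invariant in $\mathcal{B}_1(\Omega)$, we obtain unitary intertwiners $U_j$ between $T_{j,j}$ and $\widetilde{T}_{\Xi(j),\Xi(j)}$. Repeating the order-chasing argument from the sufficiency part of Theorem \ref{thm:similarity}, driven by Lemma \ref{order}, forces $\Xi = \mathrm{id}$. Setting $U = \diag(U_1,\ldots,U_n)$ and considering $A := UTU^*$, the diagonal of $A$ matches $\diag \widetilde{T}$, and my goal is to adjust each $U_j$ by a unitary element $A_j \in \{T_{j,j}\}'$ so that the conjugate becomes exactly $\widetilde{T}$.

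The main obstacle is this adjustment. Both $\widetilde{T}_{j,j+1}$ and $U_j T_{j,j+1} U_{j+1}^*$ intertwine $\widetilde{T}_{j,j}$ and $\widetilde{T}_{j+1,j+1}$, so they differ by a holomorphic functional factor in $\{\widetilde{T}_{j,j}\}'$, and condition (2) forces this factor to have modulus one on the bundle $E_{\widetilde{T}_{j+1,j+1}}$. I will use this to choose the $A_j$ inductively so that $U_j A_j T_{j,j+1} A_{j+1}^* U_{j+1}^* = \widetilde{T}_{j,j+1}$ for $j = 1,\ldots,n-1$; consistency of successive choices will reduce, via Property $(H)$ for the pairs $(T_{j,j}, T_{j+1,j+1})$, to the already-established equality of curvatures. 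Once the super-diagonal is aligned, condition (3) together with the factorizations $T_{\ell,j} = \phi_{\ell,j}(T_{\ell,\ell}) T_{\ell,\ell+1}\cdots T_{j-1,j}$ and the commutation $U_\ell \phi_{\ell,j}(T_{\ell,\ell}) = \phi_{\ell,j}(\widetilde{T}_{\ell,\ell}) U_\ell$ force every higher off-diagonal block of the adjusted conjugate to equal $\widetilde{T}_{\ell,j}$, completing the proof.
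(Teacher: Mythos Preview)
Your proposal is correct in outline and broadly parallels the paper's argument, but the sufficiency construction differs in a way worth noting. After establishing $\Xi=\mathrm{id}$ (which you do exactly as the paper does, via Lemma~\ref{order}), the paper does \emph{not} pick arbitrary unitary intertwiners $U_j$ and then adjust. Instead it observes that condition~(2), rewritten as $\|t_{\ell+1}\|/\|\tilde t_{\ell+1}\|=\|t_\ell\|/\|\tilde t_\ell\|$, forces a \emph{single} holomorphic function $\phi$ with $\|t_\ell\|=|\phi|\,\|\tilde t_\ell\|$ for every $\ell$ simultaneously; it then defines $U_\ell(t_\ell(w))=\phi(w)\tilde t_\ell(w)$ directly, and checks that this block-diagonal $U$ intertwines all of $T$ with $\tilde T$ using~(3). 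Your adjustment scheme amounts to the same thing once you notice that a unitary element of $\{T_{j,j}\}'$ for $T_{j,j}\in\mathcal B_1(\Omega)$ must be a unimodular \emph{constant} (a holomorphic function of modulus one on $\Omega$ is constant), so your $A_j$ are just scalars $c_j$ chosen to line up phases---which is precisely the freedom absorbed by the paper's single $\phi$. Your invocation of Property~$(H)$ for ``consistency of successive choices'' is a red herring: the inductive definition $c_1=1$, $c_{j+1}=c_j e^{i\alpha_j}$ has no consistency constraint to check, and Property~$(H)$ plays no role at this point.

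One small gap in your necessity sketch: you verify~(2) only for adjacent indices $\theta_{j,j+1}$, whereas the statement asserts $\theta_{\ell,j}(T)=\theta_{\ell,j}(\tilde T)$ for all $\ell<j$. The paper handles this by first extracting from the adjacent case that $\|t_j\|/\|\tilde t_j\|$ is independent of $j$, and then combining this with $\phi_{\ell,j}=\tilde\phi_{\ell,j}$ to get the general equality; you should add that step.
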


\begin{proof}
Suppose $T\sim_u\tilde{T}$ in $ \mathcal{NCFB}_n(\Omega)$, i.e. there is a unitary operator $U$ such that $UT=\tilde{T}U$. By Theorem 3.5 in \cite{JJKM17}, such an intertwining unitary must be diagonal, that is,
$U=U_1\oplus \cdots \oplus U_{n},$ for some choice of $n$ unitary operators $U_1, \ldots , U_{n}.$

Since $U_\ell T_\ell=\tilde{T}_\ell U_\ell, 1\leq \ell \leq n,$ and
$U_\ell T_{\ell,j}=\tilde{T}_{\ell,j}U_{j}, 1\leq \ell< j\leq  n.$ 
In particular, $U_\ell T_{\ell,\ell+1}=\tilde{T}_{\ell,\ell+1}U_{\ell+1}, 1\leq \ell\leq n-1.$  
Suppose that there exists a holomorphic function $\phi_\ell$ such that 
$U_\ell(t_\ell)=\phi_\ell\tilde{t}_\ell$. 
Then we have that 
\begin{align*}
U_\ell T_{\ell,\ell+1}(t_{\ell+1})=&U_\ell(t_\ell)=\phi_\ell\tilde{t}_\ell\\
=&\tilde{T}_{\ell,\ell+1}U_{\ell+1}(t_{\ell+1})\\
=&\phi_{\ell+1}\tilde{t}_\ell.
\end{align*}
Thus, we have $\phi_\ell=\phi_{\ell+1}$. 
We then set $\phi:=\phi_\ell, \ell=1,2\cdots,n$. 
That means 
\begin{eqnarray}\label{inv1} 
U_\ell (t_\ell(w))=\phi(w)\tilde{t}_\ell(w),\; 1\leq \ell\leq n, w\in \Omega,
\end{eqnarray} 
where $\phi$ is some non zero holomorphic function.   
Thus
$$K_{T_{\ell}}=K_{\tilde{T}_{\ell}}\quad \mbox{ and }\quad  \frac{\|t_{\ell+1}(w)\|}{\|\tilde{t}_{\ell+1}(w)\|}=\frac{\|t_\ell(w)\|}{\|\tilde{t}_\ell(w)\|},\quad 1\leq \ell \leq n-1.$$
or 
$$K_{T_{\ell}}=K_{\tilde{T}_{\ell}}\quad \mbox{ and }\quad \theta_{\ell-1,\ell}(T)=\tilde{\theta}_{\ell-1,\ell}(\tilde{T})\; 1\leq \ell\leq n.$$ 
It follows that $C_{\lambda}(E_{\diag{T}})=C_{\lambda}(E_{\diag{\tilde{T}}})$. 

On the other hand, we have that $T_{\ell,j}(t_j)=\phi_{\ell,j}t_\ell$ and $ T_{\ell,j}(\tilde{t}_j)=\tilde{\phi}_{\ell,j}\tilde{t}_\ell.$ 
By the equations $U_\ell T_{\ell,j}=\tilde{T}_{\ell,j}U_{j}, 1\leq \ell<j\leq n-1,$ we have that 
$$\phi_{\ell,j}=\tilde{\phi}_{\ell,j}, \quad 1\leq \ell<j\leq n$$ 
Since $ \frac{\|t_{\ell+1}\|}{\|\tilde{t}_{\ell+1}\|}=\frac{\|t_\ell\|}{\|\tilde{t}_\ell\|},\;1\leq \ell\leq n-1,$ we also have that 
$$ \frac{\|t_{j}\|}{\|\tilde{t}_{j}\|}=\frac{\|t_\ell\|}{\|\tilde{t}_\ell\|},\;1\leq \ell<j\leq n.$$ 
Since $\phi_{\ell,j}=\tilde{\phi}_{\ell,j}$, then it follows that 
$$ \theta_{\ell,j}(T)=\frac{\|T_{\ell,j}(t_{j})\|}{\|t_{j}\|}=\frac{\|\phi_{\ell,j}t_{\ell}\|}{\|t_{j}\|}=\frac{\|\tilde{\phi}_{\ell,j}\tilde{t}_{\ell}\|}{\|\tilde{t}_{j}\|}=\frac{\|\tilde{T}_{\ell,j}(\tilde{t}_{j})\|}{\|\tilde{t}_{j}\|}=\tilde{\theta}_{\ell,j}(\tilde{T}),\;1\leq \ell<j\leq n.$$
This finishes the proof of the necessary part. 

Conversely assume that $T$ and $\tilde{T}$  are operators in $\mathcal{NCFB}_n(\Omega)$ for which these invariants are the same.  
Since $C_{\lambda}(E_{\diag{T}})=C_{\lambda}(E_{\diag{\tilde{T}}})$, then we have that 
$$\prod\limits_{k=1}^{n}\left(1+\lambda \frac{i}{2\pi}K_{T_{k,k}}\right)=\prod\limits_{k=1}^{n}\left(1+\lambda \frac{i}{2\pi}K_{\tilde{T}_{k,k}}\right).$$
Thus, there exists a permutation $\Xi$ on $\{1,2,\cdots,n\}$ such that 
$$K_{T_{\ell,\ell}}=K_{\tilde{T}_{\Xi(\ell),\Xi(\ell)}},\quad  1\leq \ell\leq n.$$
That means there exist unitary operators  $U_\ell$ such that 
$$U_\ell \tilde{T}_{\Xi(\ell),\Xi(\ell)}=T_\ell U_\ell, \quad  1\leq \ell\leq n. $$

Now we will prove that $\Xi(\ell)=\ell, 1\leq \ell \leq n$. 
Otherwise, without loss of generality, we can assume that $\Xi(n)<n$. 
In fact, if $\Xi(n)=n$, we can assume $\Xi(n-1)<n-1.$  Since $U_n\tilde{T}_{\Xi(n),\Xi(n)}=T_{n,n}U_n,$ 
then we have that 
\begin{align*}
U_n\tilde{T}_{\Xi(n),\Xi(n)}\tilde{T}_{\Xi(n),n}=&T_nU_n\tilde{T}_{\Xi(n),n}\\
U_n\tilde{T}_{\Xi(n),n}\tilde{T}_{n,n}=&T_{n,n}U_n\tilde{T}_{\Xi(n),n}.
\end{align*}
Notice that $$U_{\Xi^{-1}(n)}\tilde{T}_{n,n}=T_{\Xi^{-1}(n),\Xi^{-1}(n)}U_{\Xi^{-1}(n)},$$
and $\Xi^{-1}(n)<n$ (Recall $\Xi(n)<n$). Then we have that 
$$
U_n\tilde{T}_{\Xi(n),n}\tilde{T}_{n,n}U_{\Xi^{-1}(n)}^{*}=T_{n,n}U_n\tilde{T}_{\Xi(n),n}U_{\Xi^{-1}(n)}^{*}.
$$
It follows that $$
U_n\tilde{T}_{\Xi(n),n}U_{\Xi^{-1}(n)}^{*}T_{\Xi^{-1}(n),\Xi^{-1}(n)}=T_{n,n}U_n\tilde{T}_{\Xi(n),n}U_{\Xi^{-1}(n)}^{*}.
$$
Thus, $U_n\tilde{T}_{\Xi(n),n}U_{\Xi^{-1}(n)}^{*}\in \Ker\tau_{T_{n,n}, T_{\Xi^{-1}(n),\Xi^{-1}(n)}}$. 
Since  $\Xi^{-1}(n)<n$, by Lemma \ref{order}, we know that $ \Ker \tau_{T_{n,n}, T_{\Xi^{-1}(n),\Xi^{-1}(n)}}=\{0\}$. 
Thus we have that $\tilde{T}_{\Xi(n),n}=0$. 
This is a contradiction. 
So we have that $\Xi(\ell)=\ell, 1\leq \ell\leq n$, and 
$$T_{\ell,\ell}=U_\ell \tilde{T}_{\ell,\ell} U_\ell^{*}, \quad K_{T_{\ell,\ell}}=K_{\tilde{T}_{\ell,\ell}} \quad  \ell=1,2,\ldots,n.$$

Equality  of the two curvatures $K_{T_{\ell}}=K_{\tilde{T}_{\ell}}$ together with the equality of the second fundamental forms $\frac{\|t_{\ell+1}\|}{\|\tilde{t}_{\ell+1}\|}=\frac{\|t_\ell\|}{\|\tilde{t}_\ell\|},\;1\leq \ell \leq n-1$ implies that there exist a non-zero holomorphic function $\phi$ defined on $\Omega$ (if necessary,  one may choose a domain $\Omega_0\subseteq \Omega$ such that $\phi$ is non zero on $\Omega_0$) such that
$$\|t_\ell (w)\|=|\phi(w)|\,\|\tilde{t}_\ell(w)\|, \quad  1 \leq \ell \leq n.$$
For $ 1 \leq \ell \leq n,$ we define $U_\ell:\mathcal{H}_\ell \to \widetilde{\mathcal{H}}_\ell $ by the formula 
$$U_\ell (t_\ell(w))=\phi(w)\tilde{t}_\ell(w),\; w\in\Omega.$$
and extend to the linear span of these vectors. 
For $1 \leq \ell \leq n,$
\begin{eqnarray*}
\|U_\ell (t_i(w))\|&=&\|\phi(w)\tilde{t}_\ell (w)\|\\
&=& |\phi(w)|\|\tilde{t}_\ell(w)\|\\
&=&\|t_\ell(w)\|.
\end{eqnarray*}
Thus each $U_\ell $ extends to an isometry from $\mathcal{H}_\ell$ to $\widetilde{\mathcal{H}}_\ell.$ 
Since $U_\ell$ is isometric and $U_\ell T_\ell=\tilde{T}_\ell U_\ell.$ 
Let $\Omega_0$ be arbitrary open connected subset of $\Omega$. 
Since  $\bigvee\limits_{w\in \Omega_0}\{\tilde{t}_\ell (w)\}=\mathcal{H}_\ell$,  then we can see  each $U_\ell$ is unitary. 

By a direct computation, we can  see that $U_\ell T_{\ell,\ell+1}=\tilde{T}_{\ell,\ell+1}U_{\ell+1}$ for $1\leq \ell \leq n-1$ also.
For any  $w\in \Omega$ and $0\leq \ell<j\leq n-2$ with $j-\ell \geq 2$, we have
$$ U_\ell (T_{\ell,j}(t_j(w)))=U_\ell(\phi_{\ell,j}(w)t_\ell(w))=\phi_{\ell,j}(w)\phi(w)\tilde{t}_\ell(w)=\tilde{\phi}_{\ell,j}(w)\phi(w)\tilde{t}_\ell(w)=\tilde{T}_{\ell,j}(U_j({t}_j(w)))$$
which implies that $$U_\ell T_{\ell,j}=\tilde{T}_{\ell,j}U_j.$$
Hence setting $U= U_1 \oplus \cdots \oplus U_{n},$ we see that $U$ is unitary and $UT=\tilde{T}U$.
This completes the proof of the Theorem.
\end{proof}

\section{Summary and Further Research}

In Section 3, it is critical to find an operator in $\m{NCFB}_n(\Omega)$ (Theorem \ref{example}) to show that $\m{NCFB}_n(\Omega)$ contains many nontrivial operators when $\Omega$ is simply connected.
It is natural to ask the following questions:
\begin{itemize}
\item[1.] Can we construct an operator in $\m{NCFB}_n(\Omega)$ when $\Omega$ is connected?
\item[2.] Can we generalize Proposition \ref{JWL} to the case $\Omega$ is (2-)connected?
\end{itemize}

In 1984, Misra \cite{Mis84} introduced homogeneous operators.
An operator $T\in \m{L}(\m{H})$ is homogeneous if $\sigma(T)\subset \overline{\mathbb{D}}$ and $\phi_\alpha(T)$ is unitarily equivalent to $T$ for all M\''{o}bius transformation $\phi_\alpha$;
Misra also introduce homogeneous for similarity.
An operator $T\in \m{L}(\m{H})$ is weakly homogeneous if $\sigma(T)\subset \overline{\mathbb{D}}$ and $\phi_\alpha(T)$ is similar to $T$ for all M\''{o}bius transformation $\phi_\alpha$;
In the framework of Herrero, we introduce ($\m{U+K}$)-homogeneous between homogeneous and weakly homogeneous.
\begin{defn}
An operator $T\in \m{L}(\m{H})$ with $\|T\|\leq 1$ is ($\m{U+K}$)-homogenous if there exists a unitary operator $U_{\alpha}$ and a compact operator $K_{\alpha}$ such that $U_{\alpha}+K_{\alpha}$ is invertible and 
$$(U_{\alpha}+K_{\alpha})T=\phi_{\alpha}(T)(U_{\alpha}+K_{\alpha}), \alpha\in \mathbb{D}. $$
for all M\"{o}bius transformation $\phi_\alpha$ of the unit disk.
\end{defn}

In \cite{Mis84}, Misra showed that an operator $T\in \m{B}_1(\Omega)$ with $\|T\|\leq 1$ is homogeneous if and only if its curvature is $-\lambda(1-|w|^2)^{-2}$ for some positive number $\lambda$.

\begin{qn}
Can we give a simple characterization of ($\m{U+K}$)-homogeneous operators?
\end{qn}

\end{document}